\title{Stability of parabolic systems of Hodge bundles over punctured $\PP^1$}
\author{Xingyu Cheng}
\date{}
\DeclareMathOperator{\pardeg}{pardeg}
\DeclareMathOperator{\parmu}{par\mu}
\begin{document}

\maketitle

\begin{abstract}
    We consider the problem of existence of semistable systems of Hodge bundles with parabolic structure over a finite set $S \subset \mathbb P^1$ of type $(1,n)$. That is, we consider parabolic Higgs bundles $(\cal E, \theta)$, where $\cal E = \cal L \oplus \cal V$ and $\theta (\cal L) \subset \cal V \otimes \Omega_{\mathbb P^1}^1 (\log S)$, where $\rank \cal L = 1$ and $\rank \cal V = n$. Such systems of Hodge bundles are $\mathbb C^\times$-fixed points in the space of all such (parabolic) Higgs bundles and these correspond to local systems coming from complex variations of Hodge structure under Simpson's correspondence. In the spirit of Agnihotri-Woodward and Belkale, we use enumerative geometry to give numerical criteria for the existence of such semistable parabolic systems of Hodge bundles with semisimple local monodromy. 
\end{abstract}

\tableofcontents

\section{Introduction}

\subsection{The Deligne-Simpson problem}

Given conjugacy classes $C_1, C_2, \ldots, C_s \subset GL(n)$, the Deligne-Simpson problem asks for necessary and sufficient conditions for the existence of matrices $A_i \in C_i$ such that $A_1 \ldots A_s = 1$. This problem is the same asking for the existence of a local system $\cal L$ over $\PP^1 \sm S$ for $S = \set{p_1, \ldots, p_s}$ where the $A_i$ are viewed as the local monodromy of $\cal L$. Such local systems are equivalent to representations $\rho :\pi_1 (\PP^1 \sm S, b) \to GL(\cal L_b)$ for some base point $b \in \PP^1 \sm S$. The fundamental group is presented as $\pi_1 (\PP^1 \sm S, b) = \braket{\gamma_1, \gamma_2, \ldots, \gamma_s \mid \gamma_1 \gamma_2 \ldots \gamma_s = 1}$, where $\gamma_i$ denotes the class of a curve $\gamma_i$ that goes around the puncture $p_i$ exactly once (see Figure \ref{fig:monodromy}). Here, we can interpret $A_i$ as the image of $\gamma_i$ under $\rho$, which is the same as the local monodromy of the local system $\cal L$. 

\begin{figure}[ht]
    \centering
    \begin{tikzpicture}
        \filldraw (0,0) circle (2pt);
        \draw (2,1) circle (2pt) node[anchor=south] {$x_1$}
        (0,2) circle (2pt) node[anchor=south] {$x_2$}
        (-2,1) circle (2pt) node[anchor=south] {$x_3$}
        (1,-1.5) circle (2pt) node[anchor=south] {$x_n$};
        \draw (-1,-1.5) node {$\ldots$};
        \draw[->] (2.5,1) arc (0:360:.5);
        \draw (2.5,1) node[anchor=west] {$\gamma_1$};
        \draw[->] (.5,2) arc (0:360:.5);
        \draw (.5,2) node[anchor=west] {$\gamma_2$};
        \draw[->] (-1.5,1) arc (0:360:.5);
        \draw (-1.5,1) node[anchor=west] {$\gamma_3$};
        \draw[->] (1.5,-1.5) arc (0:360:.5);
        \draw (1.5,-1.5) node[anchor=west] {$\gamma_n$};
        \draw (0,0)--(1.55,.8);
        \draw (0,0)--(0,1.5);
        \draw (0,0)--(-1.55,.8);
        \draw (0,0)--(.65,-1.1);
        \draw (-.1,0) node[anchor=north] {$b$};
    \end{tikzpicture}
    \caption{Picture of monodromy}
    \label{fig:monodromy}
\end{figure}
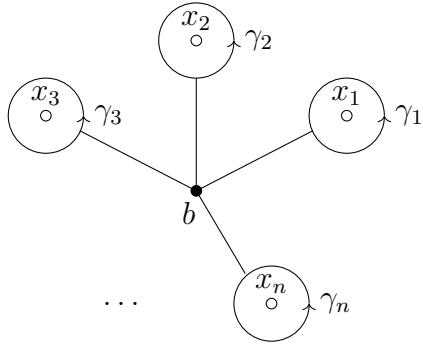

\subsection{The unitary existence problem}

Suppose now that each matrix $A_i$ is unitary with eigenvalues 
$$
e^{2 \pi i \alpha_1 (p_i)}, \ldots e^{2 \pi i \alpha_n (p_i)}
$$ 
with $0 \leq \alpha_1 (p_i) \leq \alpha_2 (p_i) \leq \ldots \leq \alpha_n (p_i) < 1$. Mehta-Seshadri \cite{mehta1980moduli} showed that irreducible local systems with (special) unitary local monodromy matrices and prescribed eigenvalues as in above are in correspondence with stable parabolic vector bundles of parabolic degree 0, which are vector bundles $\cal V$ such that for each $p \in S$, $\cal V_p$ is equipped with a (complete) flag and to each subspace of the flag is attached parabolic weights given by the $\alpha_j(p)$'s (see Definition \ref{def: parabolic bun}). In this case, the problem of existence of unitary matrices in prescribed conjugacy classes multiplying to one is equivalent to the existence of (semi)stable parabolic bundles of degree 0.

\subsubsection{Preliminary notation}
We will denote by $Gr(r,W)$ the Grassmannian of dimension $r$ subspaces of a vector space $W$. We will also denote $Gr(r,\C^n) = Gr(r,n)$. 
\begin{defn*}
Let $F_\bullet : F_1 \subset F_2 \subset \ldots \subset F_n = W$ be a complete flag of subspaces of an $n$-dimensional vector space $W$. For a subset $I = \set{i_1 < i_2 < \ldots < i_r} \subset [n] = \set{1,2, \ldots, n}$ of cardinality $r$, the \textbf{Schubert variety} $\Omega_I (F_\bullet) \subset Gr(r,W)$ is
$$
\Omega_I (F_\bullet) = \set{U \in Gr(r,W) \mid \dim (U \cap F_{i_k}) \geq j, j = 1, \ldots, r}. 
$$
We will denote by $\sigma_I \in A^* Gr(r,W)$ the cycle class of $\Omega_I (F_\bullet)$. We will also write 
$$
\codim \sigma_I = \codim \Omega_I(F_\bullet) = \sum_{j=1}^r (n-r+j - i_j). 
$$
\end{defn*}

\begin{defn*}
    Let $I^{p_1}, \ldots, I^{p_s} \subset [n]$ be subsets each of cardinality $r$ and $d \geq 0$ an integer. The \textbf{Gromov-Witten} number $\braket{\sigma_{I^{p_1}}, \ldots, \sigma_{I^{p_s}}}_d$ is the number of stable maps (counted as zero if infinite) $f: \PP^1 \to Gr(r,n)$ of degree $d$ such that $f(p_i) \in \Omega_{I^{p_i}} (F_\bullet (p_i))$ for complete flags $F_\bullet(p_1), \ldots, F_\bullet (p_s)$ of $W$ in general position.  
\end{defn*}

Such Gromov-Witten numbers can be computed using the quantum cohomology of Grassmannians \cite{BERTRAM1997quantumschubert}.

\subsubsection{Existence of semistable parabolic bundles}

Because (semi)stability is an open condition, it suffices to look at generic parabolic bundles of parabolic degree 0. We can then apply the shifting operation described in Section \ref{section:shifting} to get the sum of weights
$$
\sum_{p \in S} \sum_{i=1}^n \alpha_i (p) = 0.
$$
Doing this forces the parabolic bundle to have degree 0. Generic bundles of rank $n$ and degree $0$ are isomorphic to the trivial bundle $\cal O^{\oplus n}$. This then reduces the problem to checking the semistability of the trivial bundle $\cal O^{\oplus n}$ with generic flags and prescribed parabolic weights. 

For such a parabolic bundle to be semistable, we need every subbundle $\cal V$, with parabolic structure induced by how it intersects the data of flags, to satisfy the parabolic slope inequality $\parmu \cal V \leq \parmu \cal O^{\oplus n} = 0$ (see Definition \ref{def: parabolic bun}), that is, we must not have a subbundle that contradicts this inequality. A rank $r$ subbundle $\cal V \subset \cal O^{\oplus n}$ have a degree, $-d$, and induced weights determined by how $\cal V$ intersects the parabolic flags of $\cal O^{\oplus n}$. How $\cal V$ intersects the flag at $p$ can be recorded by which Schubert cell $\Omega_I (F_\bullet (p)) \subset Gr(r, n) = Gr(r, \cal O^{\oplus n}_p)$ contains $\cal V_p$. The parabolic slope of $\cal V$ is completely determined by this data of degree and weights, and so the question of which inequalities to write becomes one of existence of subbundles of $\cal V \subset \cal O^{\oplus n}$ with prescribed degree such that $\cal V_p$ lies in some Schubert variety in $Gr(r,n) = Gr(r, \cal O^{\oplus n}_p)$ for $p \in S$. 

The question of which inequalities to write is then an enumerative problem. Degree $-d$ subbundles of $\cal O^{\oplus n}$ are exactly in correspondence with degree $d$ maps $f : \PP^1 \to Gr(r,n)$ and to ask for $\cal V_p$ to lie in a Schubert variety $\Omega_{I^p} (F_\bullet (p))$ is exactly the same as as having $f(p) \in \Omega_{I^p} (F_\bullet (p))$. The Gromov-Witten invariant $\braket{\sigma_{I^{p_1}}, \ldots, \sigma_{I^{p_s}}}_d$ records exactly this information. 

Agnihotri-Woodward \cite{agnihotri1998eigenvalues} and Belkale \cite{belkale2001local} were then able to give numerical conditions for the existence of such semistable parabolic bundles in terms of nonvanishing Gromov-Witten numbers and semistability inequalities.
\begin{thm*}[The unitary existence theorem]
    A semistable parabolic bundle of parabolic degree zero and weights $\alpha_1 (p) \leq \alpha_2 (p) \leq \ldots \leq \alpha_n (p) < \alpha_1 (p) + 1$ for each $p \in S$ exists if and only if the following condition holds:
    \begin{itemize}
        \item For any integers $r, d$ with $0 < r < n$, and subsets $I^p = \set{i^p_1 < i^p_2 < \ldots < i^p_r}$ such that $\braket{\sigma_{I^{p_1}}, \ldots, \sigma_{I^{p_s}}}_d \neq 0$, the semistability inequality
        $$
        -d + \sum_{p \in S} \sum_{i \in I^p} \alpha_{n-i+1} (p) \leq 0
        $$
        must hold.
    \end{itemize}
\end{thm*}

\subsection{The existence problem for variation of Hodge structures}

On the other hand, Simpson (\cite{simpson1990harmonic}, \cite{simpson1992higgs}, \cite{simpson1994moduli}) showed that $GL(n)$ solutions of such matrix problems can be associated with certain parabolic Higgs bundles (see Section \ref{section: par higgs}). In particular, there is a correspondence between filtered local systems on one side and parabolic Higgs bundles on the other. In the space of such (parabolic) Higgs bundles, there are certain special $\C^\times$ fixed points called systems of Hodge bundles (see \cite{simpson1990harmonic}), which are in association with complex variations of Hodge structures. Such local systems are interesting, for example, because these are the local systems that come from geometry and all rigid local systems over a multiply punctured $\PP^1$ in the sense of \cite{katz1996rigid} are of this type.

We can set up a general algebro-geometric problem. Let $(\cal E, \theta)$ be a parabolic system of Hodge bundles. Then by Definition \ref{def: shb}, we must have that 
$$
\cal E = \cal E_1 \oplus \cal E_2 \oplus \ldots \oplus \cal E_N
$$
and $\theta$ must satisfy $\theta |_{\cal E_i} : \cal E_i \to \cal E_{i+1} \otimes \Omega^1_{\PP^1} (\log S)$. In this parabolic system of Hodge bundles case, we can assume that the parabolic flags are compatible with this decomposition of $\cal E$ \cite{simpson1992higgs}, i.e. the subspaces making up the flag of $\cal E_p$ can be broken up into a direct sum of subspaces of $\cal E_{i,p}$. We can then give each $\cal E_i$ the data of flags with weights at each point $p \in S$. Set $\alpha^i_1 (p) \leq \alpha^i_2 (p) \leq \ldots \leq \alpha^i_{r_i} (p)$ be the parabolic weights of $\cal E_{i,p}$, and denote the flags of $\cal E_{i,p}$ as $F_1^i(p) \subset F_2^i (p) \subset \ldots \subset F_{r_i}^i (p)$, where the subspace $F^i_j(p)$ will be associated with weight $\alpha^i_{n-i+1} (p)$.

\begin{question*}
\label{main question}
If we fix the data of $\rank \cal E_i$, $\deg \cal E_i$, $\rank \theta |_{\cal E_i}$, and weights $\set{\alpha^i_j (p) \mid 1 \leq j \leq r_i, i \in [N], p\in S}$, then does there exist a semistable parabolic system of Hodge bundles $(\cal E,\theta)$, with nonzero $\theta$, where $\cal E_i$ has flags with weights $\alpha^i_j (p)$ for each $j$ and $p$ as in the above?
\end{question*}

The problem may look very different for different numerical data, and so this question actually consists of many different problems, each perhaps requiring separate techniques. Another question to ask is to ask exactly what numerical data is needed to isolate an irreducible component of the moduli space of all such objects, and therefore make the question more tractable for enumerative techniques. We explain more about the general problem in Section \ref{section:general ag prob} and give some examples of some situations that give rise to irreducible components.

The difference between our situation of systems of Hodge bundles and the unitary case is that the existence of our Higgs field $\theta$ imposes certain restraints. In the unitary case, we can assume our parabolic bundle are as general as possible and that our flags are as general as possible. However the condition that $\theta : \cal E_i \to \cal E_{i+1} \otimes \Omega^1 (\log S)$ along with the numerical data of $\rank \theta \mid_{\cal E_i}$ imposes restrictions on the bundles $\cal E_i$. Furthermore, $\res_p \theta$ (see Definition \ref{def: shb}) is required to preserve the flags of the parabolic systems of Hodge bundles, i.e. $\res_p \theta (\cal E_{\alpha(p)}) \subset \cal E_{\alpha(p)} \otimes \Omega^1 (\log S)_p$, where $\cal E_{\alpha(p)}$ denotes the (largest) part of the flag of $\cal E_p$ with weight $\alpha(p)$. This then imposes additional restraints on the parabolic flags and hence we cannot assume the flags to be generic in this case. 

\subsection{The \texorpdfstring{$(1,n)$}{(1,n)} problem}
We will not attempt to solve the question for every possible combination of numerical data. We restrict our attention to solving the case where where our bundle is of type $(1,n)$ (explained below) with distinct parabolic weights, along with the $(1,2)$ case when we ask only for semisimple local monodromies instead of distinct weights. This is the main goal of this present work.

Restricting to the $(1,n)$ case means $\cal E$ only breaks up into two bundles $\cal E_1 \oplus \cal E_2$ where $\cal E_1 = \cal L$ is a line bundle and $\cal E_2 = \cal V$ is rank $n$. For $p \in S$, we give $\cal L$ weights $\alpha(p)$ and $\cal V$ weights $\beta_1 (p) < \beta_2 (p) < \ldots < \beta_n(p)$. We further ask that $\alpha(p) \neq \beta_i (p)$ for all $i$ as well, so the all the weights are distinct or generic. Furthermore, through parabolic shifting (see Section \ref{section:shifting}), we can further assume that $\alpha(p) < \beta_1 (p)$ for all $p$.

Semistability of such $(\cal E, \theta)$ are determined by $\theta$-stable subbundles of $\cal E$, and so in this $(1,n)$ situation, there are two types of subbundles to consider:
\begin{itemize}
    \item[\ref{sub of V}] All subbundles $\cal S \subset \cal V$. 
    
    \item[\ref{containing L}] Subbundles of the form $\cal L \oplus \cal S$ where $\cal S \subset \cal V$ is a subbundle such that $\cal S \otimes \Omega^1 (\log S)$ contains the image $\theta(\cal L)$.
\end{itemize}
We must handle these two situations separately and they give rise to two different classes of slope inequalities. 

We must also consider two different types of bundles as well. Let $\cal W = \cal V \otimes \Omega^1 (\log S)$. Set $w = \deg \cal W$ and $d = \deg \cal L$. If $d$ is too large, then we cannot assume $\cal W$ and hence $\cal V$ to be generic, and so this case must be dealt with separately. 
\begin{enumerate}[]
    \item[\ref{small sub}] If $d \leq \ceil{w/n}$, then in this case we can deform $\cal L \subset \cal W$ so that $\cal W$ is a generic degree $w$ bundle containing $\cal L$. 

    \item[\ref{large sub}] If $d > \ceil{w/n}$, say $\cal L \simeq \cal O(d)$, then the most generic $\cal W$ containing $\cal L$ we can deform to is $\cal W \simeq \calO (d) \oplus \cal G_{-d, n-1}$, where $\cal G_{-d, n-1}$ denotes a generic degree $-d$ rank $n-1$ bundle. A generic rank $n$ bundle over $\PP^1$ is one of the form $\bigoplus_{i=1}^n \cal O(e_i)$ where $|e_i - e_j| \leq 1$ for all $i, j$.
\end{enumerate}
From hereon, we will always assume $\cal V$, equivalently $\cal W$, to be deformed to be as generic as possible as described above.

\subsubsection{Generalized Gromov-Witten numbers}
\label{section:generalized GW}
A main ingredient in the $(1,n)$ case will the generalized Gromov-Witten numbers, which will allow us to count subbundles beyond just the trivial bundle. We can replace $\cal O^{\oplus n}$ by a generic degree $-D$ vector bundle and play the same game with counting subbundles. A generic rank $n$ vector bundle over $\PP^1$ takes the form $\cal W \simeq \bigoplus_{i = 1}^n \cal O(e_i)$, where $|e_i - e_j| \leq 1$ for all $i, j$. In this situation, we will also call generic vector bundles \textbf{evenly split}. Evenly split bundles are unique in the sense that for any fixed degree $-D$ and rank $n$, there is a collection of integers $(e_1, \ldots, e_n)$ such that any evenly split bundle of this degree and rank is isomorphic to $\bigoplus_{i=1}^n \cal O(e_i)$. This leads to the notion of generalized Gromov-Witten invariants (see, for example, Section 2.4 of \cite{belkale2008quantum} and Section 4.3 of \cite{belkale2022rigid}).

\begin{defn}
    Let $I^1, \ldots, I^s$ be subsets of $[n]$ with cardinality $r$ each. Let $d \in \Z$. Let $\cal W$ be a generic vector bundle over $\PP^1$ of degree $-D$, and suppose we have general (increasing) flags $F_\bullet (p) \in \op{Fl}(\cal W_p)$ for all $p \in S$. 

    The generalized Gromov-Witten number $\braket{\sigma_{I^1}, \ldots, \sigma_{I^s}}_{d, D}$ is defined the be number of degree $-d$ rank $r$ subbundles (counted as 0 if infinite) $\cal V \subset \cal W$ such that for $p = p_j \in S$, $\cal V_p \in \Omega_{I^j} (F_\bullet (p) ) \subset Gr(r, \cal W_p)$. 
\end{defn}
One can reduce the calculation of these generalized Gromov-Witten numbers to the regular Gromov-Witten numbers via the parabolic shifting operation described in Section \ref{section:shifting}. If we pick a point $p = p_j \in S$ to apply the parabolic shift operation at, we get that
$$
\braket{\sigma_{I^1}, \ldots, \sigma_{I^s}}_{d, D} = \braket{\sigma_{K^1}, \ldots, \sigma_{K^s}}_{d', D-1}
$$
where
\begin{itemize}
    \item $d' = d$ if $1 \notin I^j$ and $d' = d - 1$ if $1 \in I^j$;
    \item $K^k = I^k$ for $k \neq j$;
    \item $K^j = \set{i^j_1 - 1, i^j_2 - 1, \ldots, i^j_r - 1}$ if $1 \notin I^j$ and $K^j = \set{i^j_2 - 1, \ldots, i^j_r - 1, n}$ if $1 \in I^j$.
\end{itemize}

\subsubsection{The main \texorpdfstring{$(1,n)$}{(1,n)} theorem}
Recall that we have made the assumption that $\alpha(p) < \beta_1 (p) < \beta_2 (p) < \ldots < \beta_n(p)$. The data that we need to fix for this problem is the data of $\deg \cal L$, $\deg \cal V$, and the data of parabolic weights. If we have fixed all such data, then by definition
$$
\pardeg \cal E = \deg \cal L + \deg \cal V + \sum_{p \in S} \paren*{\alpha(p) + \sum_{i=1}^n \beta_i (p)}
$$
is a fixed number. In the statement of the theorem, we also have a corrective factor $r(2-s)$. This corrective factor comes from us looking at subbundles of $\cal W = \cal V \otimes \Omega^1 (\log S)$, and so to get a subbundle of $\cal V$, we need to twist by $\paren*{\Omega^1 (\log S)}^\vee \simeq \cal O(-2 + s)^\vee \simeq \cal O(2-s)$. Thus for a rank $r$ subbundle $\cal S \subset \cal W$, to get a subbundle $\cal V$, we need $\cal S \otimes \cal O(2-s)$, which has degree $\deg \cal S + r(2-s)$.

\begin{thm}[Existence of semistable $(1,n)$ parabolic system of Hodge bundles with distinct weights]
\label{thm:(1,n)}
A semistable system of Hodge bundles of type $(1,n)$ with weights $\alpha (p)$ and $\beta_1 (p) < \beta_2 (p) < \ldots < \beta_n (p)$ as above with $\deg \cal L$ and $\deg \cal V$ fixed exist if and only if the following conditions in the different cases of bundles are satisfied. 

\begin{enumerate}
        \item[\ref{small sub}] If $\deg \cal L \leq \ceil{w/n}$, then the following two types of numerical conditions must be met.
        
        \begin{itemize}
        \item[\ref{sub of V}] For every $I^{p_1}, \ldots, I^{p_s} \subset [n]$ of size $r$ such that the usual Gromov-Witten number 
        $$
        \braket{\sigma_{I^{p_1}}, \ldots, \sigma_{I^{p_s}}}_\delta \neq 0,
        $$
        the inequality
        $$
        \frac{-\delta + r(2 - s) + \sum_{p \in S} \sum_{i \in I^p} \beta_{n-i+1} (p)}{r} \leq \frac{\pardeg \cal E}{n+1}
        $$
        holds.

        \item[\ref{containing L}] For every $J^{p_1}, \ldots, J^{p_s} \subset [n-1]$ of size $r-1$ such that the generalized Gromov-Witten number $\braket{\sigma_{J^{p_1}}, \ldots,  \sigma_{J^{p_s}}}_{\delta, d} \neq 0$, the inequality
        $$
        \frac{2d - \delta + r(2 - s) + \sum_{p \in S} \paren*{\alpha(p) + \beta_1(p) + \sum_{j \in J^p} \beta_{n-j+1} (p)}}{r+1} \leq \frac{\pardeg \cal E}{n+1}
        $$
        holds.
        \end{itemize}

        \item[\ref{large sub}] If $\deg \cal L > \ceil{w/n}$, then the following two types of numerical conditions must be met.
        
        \begin{itemize}
        \item[\ref{sub of V}] For every $I^{p_1}, \ldots, I^{p_s} \subset [n]$ of size $r$ such that $\braket{\sigma_{I^{p_1}}, \ldots, \sigma_{I^{p_s}}}_{\delta, -w} \neq 0$, the inequality
        $$
        \frac{- \delta + r(2-s) + \sum_{p \in S} \sum_{i \in I^p} \beta_{n-i+1} (p)}{r} \leq \frac{\pardeg \cal E}{n+1}
        $$
        is satisfied.
        
        \item[\ref{containing L}] For every $J^{p_1}, \ldots, J^{p_s} \subset [n-1]$ of size $r-1$ such that $\braket{\sigma_{J^{p_1}}, \ldots, \sigma_{J^{p_s}}}_{\delta, d - w} \neq 0$, then the inequality 
        $$
        \frac{2d - \delta + r(2-s) + \sum_{p \in S} \paren*{\alpha(p) + \beta_1(p) + \sum_{j \in J^p} \beta_{n-j+1}(p)}}{r+1} \leq \frac{\pardeg \cal E}{n+1}
        $$
        is satisfied.
    \end{itemize}
    \end{enumerate}
\end{thm}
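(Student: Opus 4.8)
The plan is to reduce the semistability of the system of Hodge bundles $(\cal E, \theta)$ to a finite list of slope inequalities coming from the two types of destabilizing subbundles identified in the excerpt, and then translate each type of inequality into a non-vanishing statement for (generalized) Gromov-Witten numbers. First I would recall that, since $(\cal E,\theta)$ is a $\C^\times$-fixed point, $\theta$-invariant subbundles are exactly direct sums of subbundles of the graded pieces compatible with $\theta$; in the $(1,n)$ case this gives precisely the two families \ref{sub of V} (subbundles $\cal S \subset \cal V$) and \ref{containing L} (subbundles $\cal L \oplus \cal S$ with $\theta(\cal L) \subset \cal S \otimes \Omega^1(\log S)$). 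Semistability of $(\cal E,\theta)$ is then equivalent to $\parmu$ of every such subbundle being $\le \parmu \cal E = \tfrac{\pardeg \cal E}{n+1}$. Because semistability is an open condition, I would fix generic representatives: deform $\cal W = \cal V \otimes \Omega^1(\log S)$ to be evenly split as in cases \ref{small sub}/\ref{large sub}, and take the flags as generic as the constraint that $\res_p\theta$ preserve them allows. The key observation is that the induced parabolic weights on $\cal S$ (equivalently on $\cal L \oplus \cal S$) are determined purely combinatorially by which Schubert cell $\cal S_p$ lands in, so that $\parmu$ of a candidate destabilizer is a function only of its rank, degree, and a tuple of subsets $I^p$ (resp. $J^p$).

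Next I would carry out the dictionary between subbundles and maps. For family \ref{sub of V}: a rank $r$ degree $-\delta$ subbundle $\cal S \subset \cal V$ corresponds, after twisting by $\cal O(2-s)$ to sit inside $\cal W$, to a rank $r$ subbundle of $\cal W$ of degree $-\delta + r(2-s)$ meeting the flags in the Schubert varieties $\Omega_{I^p}$; in case \ref{small sub} $\cal W$ is generic of degree $w$ containing $\cal L$ but one checks $\cal V$ itself may be taken trivial-up-to-twist so the count is the ordinary GW number $\braket{\sigma_{I^{p_1}},\ldots,\sigma_{I^{p_s}}}_\delta$, while in case \ref{large sub} the obstruction forces $\cal W \simeq \calO(d)\oplus \cal G_{-d,n-1}$ and the relevant count is the generalized GW number $\braket{\sigma_{I^{p_1}},\ldots,\sigma_{I^{p_s}}}_{\delta,-w}$. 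Plugging the combinatorially-determined weights into $\parmu \cal S \le \parmu \cal E$ and clearing denominators yields exactly the stated inequality with denominator $r$. For family \ref{containing L}: here $\cal S \subset \cal V$ must contain $\theta(\cal L)$, i.e. the line $\theta(\cal L)_p \subset \cal W_p$ lies in $\cal S_p$; I would absorb this one extra incidence condition by passing to flags of length $n-1$ (quotienting out the fixed line, as reflected in $J^p \subset [n-1]$ of size $r-1$) and book-keeping the degree shift $\deg\theta(\cal L) = d$ — this is the origin of the $2d$ term (one $d$ from $\deg \cal L$ entering $\pardeg(\cal L \oplus \cal S)$, one from the twist/degree of $\theta(\cal L)$) and of the denominator $r+1$ (rank of $\cal L \oplus \cal S$). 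The weights $\alpha(p) + \beta_1(p)$ appear because, by the running assumption $\alpha(p) < \beta_1(p) < \cdots$, the line $\theta(\cal L)$ generically occupies the smallest available weight slot in $\cal V_p$. Summing the induced parabolic data and rearranging gives the displayed inequality; the case split \ref{small sub}/\ref{large sub} again only changes which GW flavor ($\braket{\cdot}_{\delta,d}$ versus $\braket{\cdot}_{\delta,d-w}$) counts these subbundles.

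Having shown necessity — every actual subbundle of the relevant rank/degree/Schubert type exists as soon as the corresponding GW number is nonzero, hence its $\parmu \le \parmu \cal E$ inequality must hold — I would prove sufficiency by the standard dimension-count/genericity argument: if all the listed inequalities hold, then for a generic choice of $(\cal E,\theta)$ with the fixed numerical data, no subbundle of either family can destabilize. Concretely, a subbundle violating $\parmu \le \parmu\cal E$ would have to meet the flags in Schubert varieties whose "expected" intersection dimension, computed via the quantum cohomology of $Gr(r,n)$ (and its evenly-split generalization, cf. \cite{belkale2008quantum,belkale2022rigid}), is negative — i.e. the associated GW number vanishes — for generic flags; but the constraint from $\res_p\theta$ preserving the flags must be shown not to force the flags into the non-generic locus where such subbundles reappear.

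The main obstacle I expect is exactly this last point: controlling the genericity of the parabolic flags subject to the $\theta$-compatibility condition $\res_p\theta(\cal E_{\alpha(p)}) \subset \cal E_{\alpha(p)}\otimes\Omega^1(\log S)_p$. Unlike the Mehta–Seshadri/Agnihotri–Woodward–Belkale unitary setting, where the flags are fully generic and one invokes Kleiman transversality directly, here one must argue that the residue constraint cuts out a sub-locus of flag-tuples that is still "generic enough" for the (generalized) GW non-vanishing dichotomy to remain valid — i.e. that the stratum of bad flags inside the $\theta$-compatible locus has positive codimension. I would handle this by analyzing $\res_p\theta$ as a map $\cal L_p \to \cal V_p$ (in the $(1,n)$ case it is essentially a single vector in each $\cal V_p$) and showing its compatibility with the flag of $\cal V_p$ imposes only the single already-accounted-for incidence used in family \ref{containing L}, leaving the remaining flag data free. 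Verifying that the resulting partial-genericity suffices for the quantum-cohomology computation — and checking the boundary-degree cases \ref{small sub}/\ref{large sub} at $d = \ceil{w/n}$ where the generic form of $\cal W$ changes — is where the real work lies.
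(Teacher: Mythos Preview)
Your overall architecture matches the paper's: reduce to the two subbundle families \ref{sub of V} and \ref{containing L}, translate the induced parabolic slope into Schubert data, and identify existence of a destabilizer with non-vanishing of a (generalized) Gromov--Witten number. The quotient strategy you sketch for family \ref{containing L} --- pass to $\cal W/\theta(\cal L)$ and count rank $r-1$ subbundles there --- is exactly what the paper does (its Lemma \ref{controlling subbundles by quotient}).

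However, you have misidentified where the real difficulty lies, and consequently there is a genuine gap. The flag-genericity obstacle you flag as ``the main obstacle'' is in fact dissolved at the outset: once the weights are shifted so that $\alpha(p) < \beta_1(p)$ for every $p$, the compatibility condition on $\res_p\theta$ only says $\theta(\cal L)_p \subset F_n(p) = \cal V_p$, which is vacuous. The flags on $\cal V$ can therefore be taken fully generic, and no delicate ``partial-genericity'' argument is needed.

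The actual technical crux --- which your proposal skips --- is in Case \ref{large sub}, family \ref{sub of V}. You write that ``the relevant count is the generalized GW number $\braket{\sigma_{I^{p_1}},\ldots,\sigma_{I^{p_s}}}_{\delta,-w}$,'' but that number is \emph{defined} as a count of subbundles of an \emph{evenly split} bundle of degree $w$, whereas here $\cal W \simeq \cal O(d) \oplus \cal G_{-d,n-1}$ is not evenly split. You have not explained why the subbundle count in this non-generic $\cal W$ agrees with the count in the generic one. The paper closes this gap with a vanishing lemma: if $\cal O(d) \not\subset \cal S$, then $\Ext^1(\cal S, \cal W/\cal S) = 0$ (proved by bounding the summands of $\cal S$ against $d$ and against the pieces of $\cal G_{-d,n-1}$). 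This unobstructedness lets one deform the pair $\cal S \subset \cal W$ in a flat family over a base $B$ whose general fiber has $\cal W$ evenly split, and then Kleiman transversality on the relative Quot scheme transports the Schubert-incidence count from the generic fiber back to the special one. The complementary case $\cal O(d) \subset \cal S$ is handled by the quotient $\cal W/\cal O(d) \simeq \cal G_{-d,n-1}$, which simultaneously produces the type-\ref{containing L} inequalities in Case \ref{large sub}. Without this $\Ext^1$ vanishing and the accompanying deformation argument, your enumeration of type-\ref{sub of V} destabilizers in Case \ref{large sub} is unjustified.
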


If $n=2$, we could in fact strengthen the conditions on the weights. Here we can allow the weights $\alpha(p), \beta_1 (p), \beta_2(p)$ to be equal. We can then apply the parabolic shifting operation at every point of $S$ to get rid of the parabolic structure and get similar inequality conditions for existence of semistable $(1,2)$ bundles. We explore this in Section \ref{section: (1,2)} and in particular Theorem \ref{thm:(1,2)} is another one of our main results.

We also analyze in Section \ref{section: (1,1)} parabolic systems of Hodge bundles of type $(1,1)$. Such local systems correspond to $U(1,1)$ representations of $\pi_1 (\PP^1 \sm S)$ and so must not be $U(2)$ local systems. We give a proof that stable $(1,1)$ parabolic systems of Hodge bundles cannot satisfy the criterion given in \cite{Biswas1998ACF} for the existence of $U(2)$ local systems. The $(1,1,\ldots,1)$ case was also analyzed with particular focus on $(1,1,1)$ systems of Hodge bundles in Section \ref{section: (1,...,1)}.

\subsection{Acknowledgment}
I wish to thank my advisor Prakash Belkale for many helpful discussions.

\section{Basic notions}
\subsection{Parabolic vector bundles}
\begin{defn}
\label{def: parabolic bun}
    A \textbf{parabolic vector bundle} over $\PP^1$ with parabolic structure at $S$ is a vector bundle $\cal V$ such that at each $p \in S$, $\cal V_p$ comes equipped with a complete flag, which we will denote as 
    $$
    F_1(p) \subset F_2 (p) \subset \ldots F_n(p) = \cal V_p.
    $$
    To each subspace $F_i (p)$ we associate a \textbf{parabolic weight} $\alpha_{n-i+1}(p)$, which must satisfy
    $$
    \alpha_1 (p) \leq \alpha_2 (p) \leq \ldots \leq \alpha_n(p) < \alpha_1 (p) + 1.
    $$
    Given a bundle with parabolic structure, we may also denote by $\cal V_{\alpha (p)}$ to be the largest subspace in the flag of $\cal V_p$ associated with weight $\alpha(p)$. 

    The \textbf{parabolic degree} of $\cal V$ is defined as
    $$
    \pardeg \calV = \deg \calV + \sum_{p \in S} \sum_{i=1}^n \alpha_i(p).
    $$

    The \textbf{parabolic slope} of $\cal V$ is defined as
    $$
    \parmu \cal V = \frac{\pardeg \cal V}{\rank \cal V}.
    $$
\end{defn}

Given a subbundle $\calW \subset \calV$ of rank $r$, we can give $\calW$ induced flags by intersecting $\cal W_p$ with $F_i (p)$. This subspace, call it $U \subset \cal W_p$ is then given the largest weight possible, that is, if $U = \calW_p \cap F_i(p) = \cal W_p \cap F_j(p)$, then we give $U$ the weight $\max \set{\alpha_{n-i+1} (p), \alpha_{n-j+1}(p)}$. This procedure may not necessarily produce a complete flag. For example, we could have $\dim \cal W_p \cap F_i (p) = m + \dim \cal W_p \cap F_{i+1} (p)$. In this case, we can complete the flag by adding in generic subspaces which must get the largest weight possible. In our example, we add in, say, $U_1 \subset \ldots \subset U_{m-1}$ between $\cal W_p \cap F_i (p)$ and $\cal W_p \cap F_{i+1} (p)$ where each $U_j$ gets weight $\alpha_{n-i} (p)$. 

Denote the the induced weights of $\cal W$ by $\beta_i (p)$, $i = 1, \ldots, r$. Then 
$$
\pardeg \calW = \deg \calW + \sum_{p \in S} \sum_{i=1}^r \beta_i (p).
$$

\begin{defn}
    A parabolic bundle $\calV$ is called \textbf{(semi)stable} if for all subbundles $\calW \subset \calV$, we have that 
    $$
    \parmu \cal W = \frac{\pardeg \calW}{\rank \calW} < \frac{\pardeg \calV}{\rank \calV} = \parmu \cal V \quad (\leq).
    $$
\end{defn}

\subsection{Parabolic Higgs bundles and Simpson's correspondence}
\label{section: par higgs}
\begin{defn}
    A \textbf{parabolic Higgs bundle} over $\PP^1$ consists of a pair $(\cal E, \theta)$, where $\cal E$ is a parabolic bundle over $\PP^1$ with parabolic structure at $S$ as above, and a morphism
    $$
    \theta : \cal E \to \cal E \otimes \Omega^1_{\PP^1} (\log S).
    $$
    Locally, over a point $p \in S$, given a local parameter $t$, we have
    $$
    \theta_p = A \frac{dt}{t} + A_0 + A_1 t + \ldots.
    $$
    We will call the $A$ in the above expansion the residue, written $\res_p \theta := A$. These residues will also be part of the data of a parabolic Higgs bundle. The residue is required to preserve the parabolic flags of $\cal E_p$, that is $\res_p \theta (\cal E_{\alpha_i (p)}) \subset \cal E_{\alpha_i (p)}.$

    A parabolic Higgs bundle is called \textbf{(semi)stable} if for all subbundles $\cal W \subset \cal E$, which are preserved by $\theta$, we have that
    $$
    \frac{\pardeg \calW}{\rank \calW} < \frac{\pardeg \cal E}{\rank \cal E} \quad (\leq).
    $$
\end{defn}

To state Simpson's correspondence, we need the notion of a filtered local system. Given a local system $\cal L$ on $U$ with base point $b$, we give $\cal L_b$ a filtration for each $p \in S$
$$
\cal L_b = \cal L_1^p \supset \cal L_2^p \supset \ldots \supset \cal L_n^p
$$
such that each $\cal L_i^p$ is associated with a number $\beta_i (p)$ with $\beta_1 (p) \leq \beta_2 (p) \leq \ldots \leq \beta_n (p)$. The local monodromy matrix $A_i$ (the image of $\gamma_i$ under the representation map) acts on the associated graded of $\cal L_b$ with the filtration labeled by $\beta_\bullet (p_i)$; call that matrix $\res (A_i)$. Now $\res A_i$ acts on the associated graded part $\cal L_j^{p_i} / \cal L_{j+1}^{p_+i}$ of $\cal L_b$. Putting $\res A_i$ into Jordan canonical form gives us a partition $P_i^{\beta_j(p_i), \lambda}$ which denotes the sizes of the Jordan blocks associated to eigenvalue $\lambda$.

On the Higgs bundle side, since $\res_p \theta$ preserves the filtration at each $p \in S$, this also acts on the associated graded. Call that induced map $Gr \res_p \theta$. We can take $\bar A(p_i) = Gr \res_{p_i} \theta$ at each $p_i \in S$, which acts on $\cal E_{i,p}$, and from this we can similarly get partitions $P_i^{\alpha_j(p_i), \tau}$ which denotes the sizes of the Jordan blocks of $\bar A (p_i)$ associated to its eigenvalue $\tau$. Under Simpson's correspondence, the partitions we get from $\res A_i$ are the same as the partitions we get from $\bar A(p_i)$ after doing the following change of labels:
\begin{align*}
    (\alpha, \tau = a + b \sqrt{-1}) & \mapsto (\beta = -2a, \lambda = e^{2 \pi \sqrt{-1} \alpha - 4 \pi b} ), \\
    (\beta, \lambda) &\mapsto (\alpha = \op{arg} (\lambda), \tau = -\beta/2 - \sqrt{-1} \log |\lambda| / 4 \pi).
\end{align*}

\subsection{Variation of Hodge Structures}
Over any curve $C$, the moduli space of Higgs bundles carries with it a natural $\C^\times$ action by $t \cdot (\cal E, \theta) = (\cal E, t \theta)$. If $C$ is compact, then the limits $t \to 0$ exist and give fixed points of the  $\C^\times$ action. Fixed points of this $\C^\times$ action gives rise to local systems coming from complex variation of Hodge structures. Over a noncomplete curve, we lose the ability to take limits of the $\C^\times$ action. However $\C^\times$ fixed points in this moduli of Higgs over the noncompact curve are still in correspondence with local systems coming from complex VHS as shown in \cite{simpson1990harmonic}. Such local systems are interesting, for instance, because they are local systems coming from geometry and all rigid local systems over $\PP^1 \sm S$ in the sense of \cite{katz1996rigid} are of this class. 

Parabolic Higgs bundles that come from such complex VHS break up in a special way. They correspond to systems of Hodge bundles defined below.
\begin{defn}
\label{def: shb}
    A \textbf{parabolic system of Hodge bundles} consists of a parabolic Higgs bundle $(\cal E, \theta)$ with parabolic structure at $S \subset \PP^1$ such that $\cal E$ decomposes as
    $$
    \cal E = \cal E_1 \oplus \cal E_2 \oplus \ldots \oplus \cal E_N,
    $$
    with $\theta|_{\cal E_i} : \cal E_i \to \cal E_{i+1} \otimes \Omega^1 (\log S)$. We will further say that a system of Hodge bundles has type $(r_1, \ldots, r_N)$ if $\rank \cal E_i = r_i$ for each $i$. 
    \label{hodge vec}

    Such parabolic systems of Hodge bundles are called \textbf{(semi)stable} if they are (semi)stable as parabolic Higgs bundles. 
\end{defn}

To check stability of such systems of Hodge bundles one only need to check subbundles compatible for the Hodge decomposition \cite{simpson1988constructing}. That is, we only need to check stability for bundles $\cal W = \cal W_1 \oplus \ldots \oplus \cal W_N$ with each $\cal W_i \subset \cal E_i$ and $\theta \mid_{\cal W} (\cal W_i) \subset \cal W_{i+1} \otimes \Omega^1 (\log S)$. We will call such subbundles \textbf{subsystems of Hodge bundles}. Furthermore, we can suppose that the flags of $\cal E_p$ respect the filtration as well, that is the subspaces of the flags of $\cal E_p$ breaks up into a direct sum of subspaces of $\cal E_{i,p}$. Thus, we can give each $\cal E_i$ the data of its own flags and weights. Now, since $\res_p \theta$ preserves the filtration, this means that $\res_p \theta ( \cal E_{1, \alpha} ) \subset \cal E_{2, \beta}$, where $\beta \geq \alpha$.

In this situation of local systems coming from $\C$-VHS, since we have that $\res_p \theta (\cal E_{i,p}) \subset \cal E_{i+1,p}$, after composing $\res_p \theta$ with itself $N$ times we must have that $(\res_p \theta)^N = 0$. Hence all the eigenvalues of $\res_p \theta$ are zero. Looking at the change of labels are described by the Simpson correspondence above, $(\alpha, \tau) \mapsto (\beta, \lambda)$, we must have that every $\beta = 0$ because, all eigenvalues the of $\res_p \theta$, $\tau = 0$. This means that there is no filtration on the level of local systems. Furthermore, because $\tau = 0$, we must also have that $\lambda$ must also lie on the unit circle. Thus, when we restrict ourselves to looking at local systems coming from $\C$-VHS, the correspondence is between stable parabolic systems of Hodge bundles of parabolic degree zero and local systems with local monodromies with eigenvalues of unit length.

\subsection{Strongly parabolic Higgs bundles}
In \cite{biswas2013parabolic}, a notion of \emph{strongly} parabolic Higgs bundles was defined. Adapted to our case and notation, it is defined in the following way.
\begin{defn}
    A \textbf{strongly parabolic Higgs bundle} will mean a parabolic Higgs bundle $(\cal E, \theta)$ as we defined such that the Higgs field $\theta$ will be such that
    $$
    \res_p \theta (\cal E_{\alpha(p)}) \subset \cal E_{\hat \alpha(p)}
    $$
    where $\hat \alpha(p)$ will denote the next highest weight after $\alpha(p)$, i.e. $\alpha(p) < \hat \alpha(p)$ with no other weights in between. In terms of flags, this means that $\res_p \theta$ takes each subspace to the next smallest subspace. In this case, we will also call $\theta$ as a \textbf{strongly parabolic Higgs field}.
\end{defn}
\begin{rmk}
    We note that in \cite{biswas2013parabolic} they call this notion as just a parabolic Higgs bundle. However, we will reserve this notion for Higgs bundles with the structure of parabolic vector bundles.
\end{rmk}

In terms of the local monodromies on the local system side, we note that asking for the Higgs field to be strongly parabolic will mean that once we take associated graded, $Gr \res_p \theta = 0$. So then going back to Simpson's correspondence to local systems, we will have that each local monodromy must be semi-simple, since the only partitions we will have are blocks of size one each. We will restrict ourselves to only looking at local systems with semisimple monodromies from now on, and hence we only look at strongly parabolic systems of Hodge bundles.

\subsection{Stability and the Harder-Narasimhan filtration}
\label{HN filt}
Any rank $n$ parabolic Higgs bundle $(\cal E, \theta)$ comes equipped with a unique Harder-Narasimhan filtration by subbundles
$$
\cal E = \cal E_1 \supsetneq \cal E_2 \supsetneq \ldots \supsetneq \cal E_h
$$
such that each $\cal E_i$ is preserved by $\theta$ (i.e. is a Higgs subbundle) with $\parmu(\cal E_1) < \parmu(\cal E_2) < \ldots < \parmu(\cal E_h)$ (see e.g. Section 3 of \cite{simpson1994moduli}). Consequently, if $(\cal E, \theta)$ is not a semistable parabolic Higgs bundle, then $\cal E_h$ will be the unique subbundle with maximal slope that violates the semistability inequalities. This $\cal E_h$ is unique, therefore when we do enumerative Gromov-Witten calculations, it suffices to check for only the cases where the Gromov-Witten number is one.

\section{The general algebro-geometric problem}
\label{section:general ag prob}
We now explain more about the general algebro-geometric/enumerative problem associated with the existence of stable parabolic systems of Hodge bundles. The big main question we context with is the following.

\begin{question}
Given the data of $(r_1, \ldots, r_N)$, $(d_1, \ldots, d_N)$, $(e_1, \ldots, e_{N-1})$ and weights $\set{\alpha^i_j (p) \mid 1 \leq j \leq r_i, i \in [N], p \in S}$, does there exist a semistable parabolic system of Hodge bundles $(\cal E, \theta)$ with
$$
\cal E = \cal E_1 \oplus \cal E_2 \oplus \ldots \oplus \cal E_N
$$
with $\rank \cal E_i = r_i$, $\deg \cal E_i = d_i$, $\rank \theta |_{\cal E_i} = e_i$, and such that $\cal E_i$ has flags at $p \in S$ with weights $\alpha^i_j (p)$ for $1 \leq j \leq r_i$?
\end{question}

This question can be tackled using enumerative geometry. Asking for such a stable bundle is the same as asking for all $\theta$-stable subbundles compatible with the Hodge decomposition to have parabolic slopes less than that of $(\cal E, \theta)$. In other words, suppose we have a subbundle $\cal W = \cal W_1 \oplus \cal W_2 \oplus \ldots \oplus \cal W_N$ where each $\cal W_i \subset \cal E_i$ is a subbundle of rank, say, $s_i$, and $\theta \mid_{\cal W} (\cal W_i) \subset \cal W_{i+1} \otimes \Omega^1 (\log S)$. Give each $\cal W_i$ the induced filtration from $\cal E_i$ as described in a similar process to usual parabolic bundles, and call these induced weights $\beta^i_j(p)$ for $1 \leq j \leq s_i$ and $i = 1, \ldots, N$ and $p \in S$. To ask for $(\cal E, \theta)$ to be semistable is to ask for all such subbundles $\cal W \subset \cal E$ the inequality
$$
\frac{\sum_{i=1}^N \paren*{\deg \cal W_i + \sum_{p \in S} \sum_{j=1}^{s_i} \beta^i_j (p)}}{\sum_{i=1}^N s_i} < \frac{\sum_{i=1}^N \paren*{\deg \cal E_i + \sum_{p \in S} \sum_{j=1}^{r_i} \alpha^i_j (p)}}{\sum_{i=1}^N r_i}
$$
holds.

In order to write down the inequalities above, we need to know when we have subsystems of Hodge bundles of type $(s_1, \ldots, s_N)$ that meets the flags of $\cal E$ in such a way as to give rise to parabolic weights $\set{\beta^i_j (p) \mid i \in [N], j \in [s_i], p \in S}$. This leads us to our enumerative problem.

\begin{question}
Given a parabolic system of Hodge bundles $(\cal E, \theta)$ with parabolic weights $\alpha^i_j (p)$ and flags as denoted above, when is there a $\theta$-stable Hodge subbundles $\cal W = \cal W_1 \oplus \ldots \oplus \cal W_N$ that meets the flags of $\cal E$ in such a way as to give $\cal W$ weights $\beta^i_j (p)$ as above.       
\end{question}

To tackle such an enumerative question, one should ask for irreducible components of the moduli space of all such Higgs bundles. Because stability is an open condition, to do the enumerative problem is to ask for general points of each component to be stable. So then we are lead to asking what data do we need to give ourselves an irreducible component? 

\begin{itemize}
    \item One easy example of an irreducible component is to ask for only one component in the Hodge decomposition and fixing any degree, i.e. ask for $\theta =0$ and fixing the degree of $\cal E$. So here we are looking at the moduli space of all bundles with fixed rank and degree, which is irreducible. In this case, we are really left with solving the unitary problem because the data we have is exactly the data of a regular parabolic vector bundle. 

    \item We can next ask for bundles of type $(1,n)$, so here $\cal E \simeq \cal L \oplus \cal V$, $\theta: \cal L \to \cal V \otimes \Omega^1 (\log S)$. We ask for $\theta \neq 0$ and so is injective, then this leaves us with the case of a bundle lying as a subsheaf inside another bundle. If we fix the numerical data of the $\cal E_i$'s (i.e. their ranks and degrees), this would then correspond to a quot scheme, which over $\PP^1$ is irreducible. 

    \item Another irreducible component we can find is the case where $(\cal E,\theta)$ is such that $\theta_i : \cal E_i \to \cal E_{i+1} \otimes \Omega^1 (\log S)$ makes each $\cal E_i$ into a subbundle of the next. So here we have flags
    $$
    \cal V_1 \subset \cal V_2 \subset \ldots \subset \cal V_N,
    $$
    where each $\cal V_i$ is an appropriated twisted $\cal E_i$. To then get irreducibility, we need to fix the numerical data of the degree and rank of each $\cal E_i$.

    \item We may also consider the dual version of the situation above. That is, consider a system of Hodge bundles $\cal E_1 \oplus \ldots \oplus \cal E_N$ where each $\theta_i := \theta \mid_{\cal E_i}: \cal E_i \to \cal E_{i+1} \otimes \Omega^1 (\log S)$ is a surjection. So, for example when $N = 2$, we are left with $\cal E_1 \to \cal E_2 \otimes \Omega^1 (\log S)$ is a surjection, in which case this is exactly the situation parametrized by certain quot schemes after fixing ranks and degrees. In the general situation, we have chains of surjections 
    $$
    \cal V_1 \twoheadrightarrow \cal V_2 \twoheadrightarrow \ldots \twoheadrightarrow \cal V_N
    $$
    and the space of which is also irreducible again after fixing degrees and ranks of each $\cal V_i$. 
\end{itemize}
In general, we may need to fix not just the data of the type of the system of Hodge bundles and degrees of the bundles, but also the data of the ranks of each $\theta_i : \cal E_i \to \cal E_{i+1} \otimes \Omega^1 (\log S)$. An interesting question would be to determine exactly what numerical conditions we need to fix in order to fix an irreducible component of the moduli space.

\section{Parabolic shifting operations}
\label{section:shifting}
For a parabolic bundle, there is a notion of a parabolic shifting operation. This operation does a cyclic shift on the weights and flags of the parabolic bundle and will be one of our main technical tools. 

Let $\calV$ be a rank $n$ bundle over $\PP^1$ with parabolic structure at point $p \in S$, with weights $\alpha_1 \leq \alpha_2 \leq \ldots \leq \alpha_n < \alpha_1 + 1$. Let $F_n \supset F_{n-1} \supset \ldots \supset F_1$ be the flag of $\calV_p$ coming from the parabolic structure with each $F_{n-i+1}$ associated with weight $\alpha_i$. Fix a local parameter $t$ at $p$. Then the shifted parabolic bundle $\tilde \calV$ consists of sections $s \in \calV(p)$ such that $ts |_p \in F_1$. $\tilde \calV$ coincides with  $\calV$ outside of $p$, so nothing happens away from $p$, therefore it suffices to only look at what happens when applying the shifting operation at the single point $p$. At $p$, we have that $\calV_p \to \tilde \calV_p$ has kernel $F_1$. We can define $\tilde F_i$ to be the image of $F_{i+1}$ of this map for $i < n$ and equal to $\tilde \calV_p$ for $i = n$. The shifted weights of $\tilde \calV$ at $p$ are called $\tilde \alpha_i$, and similar with the flags $\tilde \alpha_i = \alpha_{i-1}$ for $i > 1$ and $\tilde \alpha_1 = \alpha_n - 1$. For our shifted bundle $\tilde{\cal V}$, at point $p$ we have the flag
$$
\tilde F_n \supset \tilde F_{n-1} \supset \ldots \supset \tilde F_1 
$$
associated to weights 
$$
\tilde \alpha_1 = \alpha_n - 1 \leq \tilde \alpha_2 = \alpha_1 \leq \ldots \tilde \alpha_n  = \alpha_{n-1}.
$$
Explicitly, if $e_1, \ldots, e_n$ is a basis of $\cal V_p$ compatible with the filtration, that is, $F_1 = \braket{e_1} \subset F_2 = \braket{e_1, e_2} \subset \ldots \subset F_n = \braket{e_1, \ldots, e_n}$, then a basis of $\tilde{\cal V}_p$ would be $e_2, \ldots, e_n, e_1/t$ and we have
$$
\tilde F_1 = \braket{e_2} \subset \tilde F_2 = \braket{e_2, e_3} \subset \ldots \tilde F_{n-1} = \braket{e_2, \ldots, e_n} \subset \tilde F_n = \braket{e_2, \ldots, e_n, e_1/t}.
$$

We can apply this operation to parabolic systems of Hodge bundles, and it turns out that, after applying this operation, we are still left with a parabolic system of Hodge bundles. The main thing to verify when applying the parabolic shifting operations in this setting is what happens with the Higgs field $\theta : \cal E \to \cal E \otimes \Omega^1 (\log S)$. Locally at a point $p$ in $S$ with a chosen local parameter $t$, 
$$
\theta_p = A \frac{dt}{t} + B + \ldots,
$$
and the matrices $A, B$ must be linear in $t$ as well. Given our local basis $e_1, \ldots, e_n$ of $\cal E_p$ compatible with the filtration of $\cal E_p$ as above, once we shift, we are left with basis $e_2, \ldots, e_n, e_1/t$. Then $\tilde \theta$ is defined by extending $\theta$ to $e_1/t$ by factoring out the $1/t$ linearly. We will be much more explicit about how the shifting changes the Higgs field in various situations in the proof of Lemma \ref{lem: shifting preserves SHB}.

One subtlety around defining the shift operation for systems of Hodge bundles comes when we allow weights to be the same. Suppose we have a system of Hodge bundles given by $\cal E = \cal E_1 \oplus \cal E_2$, where $\rank \cal E_1 = r$ and $\rank \cal E_2 = s$, and $\theta: \cal E_1 \to \cal E_2 \otimes \Omega^1_{\PP^1} (\log S)$. Suppose at $p$, $\cal E_1$ has weights $\alpha_1 \leq \alpha_2 \leq \ldots \leq \alpha_r$ and $\cal E_2$ has weights $\beta_1 \leq \beta_2 \leq \ldots \leq \beta_s$. When we shift the whole bundle $(\cal E,\theta)$, we need to shift only the highest weight and hence move the lowest subspace of the flag at $p$. Therefore, we need to resolve the ambiguity whenever $\alpha_r = \beta_s$.  In this case, we will always take the subspace associated with $\beta_s$ to be the subspace we shift along, i.e. the smallest subspace of $\cal E_2$. 

This shifting operation is also invertible. Going back to the original situation of rank $n$ $\cal V$ with weights $\alpha_1 \leq \alpha_2 \leq \ldots \leq \alpha_n$, then if we shift $n$ times, we get that $\tilde{\cal V} = \cal V (p)$ with weights
$$
\alpha_1 - 1 \leq \alpha_2 - 1 \leq \ldots \leq \alpha_n - 1.
$$
We can then recover the original bundle $\cal V$ by twisting by $\cal O(-p)$ and the original weights by adding each weight of $\cal V$ by 1. Alternatively, we can simply reverse all the operations we've done for the shifting operation backwards. 

\begin{lem}
\label{lem: shifting preserves SHB}
Let $(\cal E, \theta)$ be a parabolic system of Hodge bundles over $\PP^1$ with parabolic structure at $S$. Let $\tilde{\cal E}$ be the image of $\cal E$ under the shift operator, and $\tilde \theta$ be the induced map. Then $(\tilde{\cal E}, \tilde \theta)$ is also a parabolic system of Hodge bundles. 
\end{lem}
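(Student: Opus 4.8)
The plan is to reduce everything to a local computation at the shift point $p$ and to isolate the one fact about systems of Hodge bundles that makes the argument work: the Higgs residue annihilates the bottom line of the flag. Away from $p$ we have $(\tilde{\cal E},\tilde\theta)=(\cal E,\theta)$, so the splitting $\cal E=\bigoplus_i\cal E_i$, the condition $\theta|_{\cal E_i}\subset\cal E_{i+1}\otimes\Omega^1(\log S)$, and flag-preservation of the residues at points $\neq p$ are all inherited, and the cyclic shift of weights automatically satisfies $\tilde\alpha_1\le\cdots\le\tilde\alpha_n<\tilde\alpha_1+1$. So it remains to check, in a neighbourhood of $p$, that (i) $\tilde{\cal E}$ is a vector bundle which still splits as $\bigoplus_i\tilde{\cal E}_i$, (ii) $\tilde\theta$ has at worst a logarithmic pole at $p$, (iii) $\tilde\theta(\tilde{\cal E}_i)\subset\tilde{\cal E}_{i+1}\otimes\Omega^1(\log S)$, (iv) $\res_p\tilde\theta$ preserves the shifted flag $\tilde F_\bullet(p)$, and (v) $\tilde F_\bullet(p)$ respects $\bigoplus_i\tilde{\cal E}_i$.

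First I would pick a local frame $e_1,\dots,e_n$ of $\cal E$ near $p$ that is simultaneously compatible with the parabolic flag (so $F_j(p)=\braket{e_1,\dots,e_j}$) and with the Hodge decomposition (each $e_j$ a section of a single $\cal E_i$); this is possible because the flag of $\cal E_p$ respects the decomposition. In particular the bottom line $F_1(p)=\braket{e_1(p)}$ lies in one summand $\cal E_{i_0,p}$ — here the tie-breaking convention of Section~\ref{section:shifting} fixes $i_0$ when several summands realize the top weight. In the shifted frame $e_2,\dots,e_n,e_1/t$ the summand $\tilde{\cal E}_{i_0}$ has $e_1$ replaced by $e_1/t$ while every other summand keeps its frame; this gives (i) and (v) at once and exhibits $\tilde F_\bullet(p)$ explicitly.

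The heart of the proof is the claim $\res_p\theta(e_1)=0$. Indeed $\res_p\theta$ preserves $F_1(p)=\braket{e_1}$, while the Hodge condition gives $\res_p\theta(\cal E_{i_0,p})\subset\cal E_{i_0+1,p}$ (with $\cal E_{N+1}:=0$), so $\res_p\theta(e_1)\in\braket{e_1}\cap\cal E_{i_0+1,p}\subset\cal E_{i_0,p}\cap\cal E_{i_0+1,p}=0$. Writing $\theta(e_1)=v(t)\frac{dt}{t}$ with $v$ holomorphic and valued in $\cal E_{i_0+1}$, the vanishing of the residue means $v(t)=t\,w(t)$, hence $\tilde\theta(e_1/t)=\frac1t\theta(e_1)=w(t)\frac{dt}{t}$ has only a log pole and lands in $\tilde{\cal E}_{i_0+1}\otimes\Omega^1(\log S)$, which is (ii) and (iii) for the one changed frame vector; for the remaining $e_k$ ($k\ge2$) one has $\tilde\theta(e_k)=\theta(e_k)$, and the only bookkeeping is that an $e_1$-component of $\theta(e_k)$ (which can occur when $e_k\in\cal E_{i_0-1}$, since then $\theta(e_k)\in\cal E_{i_0}\otimes\Omega^1(\log S)$) must be rewritten via $e_1=t(e_1/t)$: this multiplies that component by $t$, so it only becomes more regular, and $\theta|_{\cal E_N}=0$ forces $\tilde\theta|_{\tilde{\cal E}_N}=0$. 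The same substitution also gives (iv): since $\res_p\theta(e_k)\in\braket{e_1,\dots,e_k}$ and the $e_1$-term acquires a factor $t$ and so drops out of the residue, $\res_p\tilde\theta(e_k)\in\braket{e_2,\dots,e_k}=\tilde F_{k-1}(p)$; together with $\tilde F_n(p)=\tilde{\cal E}_p$ this shows $\res_p\tilde\theta$ preserves every step of $\tilde F_\bullet(p)$.

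I expect the only real obstacle to be organizing the local bookkeeping in (ii)--(iv) cleanly: one must keep straight that only the frame of $\cal E_{i_0}$ changes, treat separately the map \emph{into} $\cal E_{i_0}$ (codomain frame changes, the $i=i_0-1$ case) and the map \emph{out of} $\cal E_{i_0}$ (domain frame changes, the $i=i_0$ case), and verify in each that replacing $e_1$ by $e_1/t$ never produces a pole worse than logarithmic — all of which ultimately rests on the single fact $\res_p\theta(e_1)=0$, i.e.\ that the Higgs field of a system of Hodge bundles strictly raises the Hodge index and therefore cannot have the bottom flag line as an eigenline of its residue.
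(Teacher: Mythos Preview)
Your proof is correct and follows essentially the same approach as the paper: a local computation at $p$ hinging on the observation that $\res_p\theta$ annihilates the bottom line $e_1$ of the flag (since the residue both preserves $\braket{e_1}$ and raises the Hodge index), after which the substitution $e_1 = t(e_1/t)$ handles all the bookkeeping. The only organizational difference is that the paper first reduces to the two-summand case $(r,s)$ and then splits into cases according to whether the bottom line lies in the source summand $\cal E_1$ or the target summand $\cal E_2$, whereas you work directly with general $N$ and treat the maps into and out of $\cal E_{i_0}$ in a single pass.
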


\begin{proof}
    It suffices to prove this for systems of Hodge bundles of type $(r,s)$. Suppose $\cal E = \cal E_1 \oplus \cal E_2$, $\theta : \cal E_1 \to \cal E_2 \otimes \Omega^1 (\log S)$, $\rank \cal E_1 = r,$ and $\rank \cal E_2 = s$. Fix $p \in S$ as needed. Denote the parabolic weights of $\cal E_1$ at $p$ as
    $$
    \alpha_1 \leq \alpha_2 \leq \ldots \leq \alpha_r
    $$
    and the weights of $\cal E_2$ at $p$ as
    $$
    \beta_1 \leq \beta_2 \leq \ldots \leq \beta_s.
    $$
    Denote the flag of $\cal E_{1,p}$ as $F_1 \subset F_2 \subset \ldots \subset F_r$ and the flag of $\cal E_{2,p}$ as $G_1 \subset \ldots \subset G_s$, where $F_i$ gets weight $\alpha_{r - i + 1}$ and $G_j$ gets weight $\beta_{s - j + 1}$.
    
    Let $e_1, \ldots, e_r$ be a (local) basis for $\cal E_{1,p}$ compatible with its filtration, i.e. 
    $$
    F_1 = \braket{e_1} \subset F_2 = \braket{e_1, e_2} \subset \ldots \subset F_r = \braket{e_1, \ldots, e_r}
    $$
    Similarly, we pick a compatible basis $f_1, \ldots, f_s$ of $\cal E_{2,p}$ such that
    $$
    G_1 = \braket{f_1} \subset G_2 = \braket{f_1, f_2} \subset \ldots G_s = \braket{f_1, \ldots, f_s}.
    $$

    There are two possible cases for the shift operation: either $\alpha_r$ or $\beta_s$ is the highest weight, that is either $\alpha_r > \beta_s$ or $\alpha_r \leq \beta_s$. 

    \begin{noindent}
    \textbf{Case} $\alpha_r > \beta_s$:    
    \end{noindent}
    
    We first tackle when $\alpha_r$ is the highest weight. After shifting $\tilde{\cal E} = \tilde{\cal E_1} \oplus \cal E_2$. $\cal E_2$ remains unchanged after the shift operator with all the flags and weights remaining the same. The weights of $\tilde{\cal E}_1$ become now 
    $$
    \alpha_r - 1 \leq \alpha_1 \leq \ldots \leq \alpha_{r-1},
    $$
    and the compatible basis for $\tilde{\cal E}_{1,p}$ becomes $e_2, \ldots, e_r, e_1/t$ and the subspaces in the flag becomes
    $$
    \tilde F_1 = \braket{e_2} \subset \tilde F_2 = \braket{e_2, e_3} \subset \ldots \tilde F_{r-1} = \braket{e_2, \ldots, e_r} \subset \tilde F_r = \braket{e_2, \ldots, e_r, e_1/t}.
    $$
    Here $\tilde F_r$ gets weight $\alpha_r - 1$ and $\tilde F_{i}$ gets weight $\alpha_{n-i}$ for $i < r$. We need to check that $\res_p \tilde \theta$ still preserve this filtration. Note that the only thing we need to check in this situation is what happens to $e_1/t$. 
    
    Write $\theta_p$ as
    $$
    A \frac{dt}{t} + B \, dt + \ldots, 
    $$
    where we can write $A = (a_{ij})$ and $B = (b_{ij})$. Note that in the original unshifted bundle, $\alpha_r$ was the largest weight. This means that $\res_p \theta (e_1) = 0$, so $a_{i1} = 0$. Further applying $B$ to $e_1/t$ will now leave a pole. Hence we must have that $\res_p \tilde \theta_p (e_1/t) = \sum_{i=1}^s b_{i1} e_1$. Now, since $\alpha_r - 1 < \beta_1$, this means that actually $e_1/t$ is allowed to go to anything in $\braket{f_1, \ldots, f_s} = \cal E_{2,p}$, so $\tilde \theta$ still sends $e_1/t$ to the correct part of the flag in $\cal E_{2,p}$. Thus, $\res_p \tilde \theta$ preserves the filtration in this case.

    \begin{noindent}
        \textbf{Case} $\alpha_r \leq \beta_s$:
    \end{noindent}
    
    Next we need to check this result for when $\beta_s$ is the largest weight. In that case $\tilde{\cal E} \simeq{\cal E_1} \oplus \tilde{\cal E_2}$, where now $\cal E_1$ remains unchanged. The new weights of $\cal E_2$ at $p$ are
    $$
    \beta_s - 1 \leq \beta_1 \leq \ldots \leq \beta_{s-1},
    $$
    the new basis is $f_2, \ldots, f_s, f_1/t$, and the flag becomes
    $$
    \tilde G_1 = \braket{f_2} \subset \tilde G_2 = \braket{f_2, f_3} \subset \ldots \tilde G_{s-1} = \braket{f_2, \ldots, f_s} \subset \tilde G_s = \braket{f_2, \ldots, f_s, f_1/t},
    $$
    where similar to above, $\tilde G_s$ gets weight $\beta_s - 1$ while $\tilde G_i$ gets weight $\beta_{s-i}$ for $i < s$. Now we need to make sure that $\res_p \tilde \theta$ still preserve the filtration. Suppose $\res_p \tilde \theta (\cal E_{1, \alpha}) \subset \cal E_{2, \beta}$ such that $\cal E_{1, \alpha} = F_i$ and $\cal E_{2, \beta} = G_j$, so we have that $\res_p \theta (F_i) \subset G_j$, i.e. we have that $\braket{e_1, \ldots, e_i} \to \braket{f_1, \ldots, f_j},$ and when we shift, we must have that $\res_p \tilde \theta (\braket{e_1, \ldots, e_i}) \subset \braket{f_2, \ldots, f_j}$ for $j < s$ and if $j = s,$ we must have $\res_p \tilde \theta (\braket{e_1, \ldots, e_i}) \subset \braket{f_2, \ldots, f_s, f_1/t}$. The only case we need to worry about is when $j < s$.
    
    If $j < s$, then we encounter a problem only if any image of $\res_p \theta |_{F_i}$ has a component in $f_1 = t (f_1/t)$. But note that we can decompose $\res_p \theta$ by
    $$
    \res_p \theta (v)= \sum_{i=1}^{s-1} A_i (v) f_i + A_s (v) f_s,
    $$
    where each $A_i \in \cal E_{1,p}^\vee$. So then under the shift operation, $\tilde \theta$ becomes
    $$
    \sum_{i=1}^{s-1} A_i  f_i + A_s  t \paren*{\frac{f_s}{t}} + \ldots,
    $$
    which means then that the $f_s$ part is no-longer part of the residue, and hence $\res_p \tilde \theta (v) = \sum_{i=1}^{s-1} A_i (v) f_i$, and this map preserves the filtration because the original $\res_p \theta$ preserved the filtration, and hence every basis vector goes to the correct part of the filtration that it needs to go to. 
\end{proof}

\begin{cor}
    The parabolic shift operations preserve stability of systems of Hodge bundles.
\end{cor}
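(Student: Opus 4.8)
The plan is to reduce the statement to what has already been established in Lemma \ref{lem: shifting preserves SHB} together with a bookkeeping computation of how the shift operation changes parabolic degrees. First I would recall that, by \cite{simpson1988constructing} as cited in the paragraph following Definition \ref{def: shb}, checking (semi)stability of a parabolic system of Hodge bundles only requires testing subsystems of Hodge bundles $\cal W = \cal W_1 \oplus \cdots \oplus \cal W_N$ with each $\cal W_i \subset \cal E_i$ and $\theta(\cal W_i) \subset \cal W_{i+1} \otimes \Omega^1(\log S)$. By Lemma \ref{lem: shifting preserves SHB}, $(\tilde{\cal E}, \tilde\theta)$ is again a parabolic system of Hodge bundles, so the same reduction applies to it. Moreover, the shift operation is a bijection between subsheaves: a subsystem $\cal W \subset \cal E$ determines the subsheaf $\tilde{\cal W} \subset \tilde{\cal E}$ generated by the appropriate local sections (those $s$ with $ts|_p$ in the relevant subspace of the flag at $p$), and this is reversible because the shift is invertible as noted at the end of Section \ref{section:shifting}. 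So it suffices to show that the parabolic slope inequality $\parmu \cal W \ \square\ \parmu \cal E$ holds if and only if $\parmu \tilde{\cal W} \ \square\ \parmu \tilde{\cal E}$ holds (with $\square$ being $<$ or $\leq$).

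The key step is therefore the identity relating $\pardeg \tilde{\cal W}$ to $\pardeg \cal W$. The shift is supported at a single point $p$, so away from $p$ nothing changes and I only track the contribution at $p$. For the full bundle $\cal E$ of rank $n+1$ (in general rank $\sum r_i$), shifting cyclically permutes the weights at $p$ and subtracts $1$ from the old top weight: the underlying bundle gains degree $1$ (a section with a simple pole along the top line is now regular) while the sum of weights at $p$ decreases by $1$ (the top weight $\alpha_{\max}$ becomes $\alpha_{\max} - 1$). Hence $\pardeg \tilde{\cal E} = \pardeg \cal E$ — the parabolic degree is shift-invariant. For a subsystem $\cal W$ the analysis is the same but depends on whether the smallest subspace of the flag at $p$ that we shift along is contained in $\cal W_p$ or not: if it is, then $\deg \tilde{\cal W} = \deg \cal W + 1$ and the induced top weight drops by $1$, so $\pardeg \tilde{\cal W} = \pardeg \cal W$; if it is not, then $\tilde{\cal W}_p$ as a subspace of $\tilde{\cal E}_p$ just picks up the permuted flag data, the degree of $\cal W$ is unchanged, and one checks directly that the induced weights on $\cal W$ are exactly the cyclically relabeled old weights, so again $\pardeg \tilde{\cal W} = \pardeg \cal W$. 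In either case $\pardeg$ and $\rank$ are both preserved, hence so is $\parmu$, and the slope inequalities — for $\cal W$ against $\cal E$ and for $\tilde{\cal W}$ against $\tilde{\cal E}$ — are literally the same inequality. Since the shift sets up a bijection on subsystems of Hodge bundles, semistability (respectively stability) of $(\cal E,\theta)$ is equivalent to that of $(\tilde{\cal E},\tilde\theta)$.

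The main obstacle I anticipate is the careful verification that the \emph{induced} parabolic structure on a subsystem $\cal W$ transforms correctly under the shift — in particular handling the case where passing to $\cal W$ forces the induced flag to be completed by generic subspaces (as in the discussion after Definition \ref{def: parabolic bun}), and making sure that the ``largest weight possible'' convention is compatible with the cyclic relabeling $\tilde\alpha_i = \alpha_{i-1}$, $\tilde\alpha_1 = \alpha_n - 1$. One also has to treat with care the tie-breaking convention fixed in Section \ref{section:shifting} for when $\alpha_r = \beta_s$, i.e. which subspace is shifted along; but since that convention is already built into the definition of the shift used in Lemma \ref{lem: shifting preserves SHB}, it only needs to be invoked, not re-proved. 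Everything else is the routine degree-versus-weight accounting sketched above.
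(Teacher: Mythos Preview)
Your proposal is correct and follows essentially the same argument as the paper: both show that the shift operation preserves $\pardeg$ on $\cal E$ and on every subsystem $\cal W$ (splitting into the two cases $F_1 \subset \cal W_p$ and $F_1 \not\subset \cal W_p$), and then invoke the invertibility of the shift to get a bijection on subsystems of Hodge bundles, so that the slope inequalities transfer verbatim. The paper's write-up is slightly terser on the second case (it simply asserts $(\tilde{\cal W},\tilde\theta)=(\cal W,\theta)$ when $F_1\not\subset\cal W_p$), whereas you flag the induced-weight bookkeeping as the point needing care, but the underlying reasoning is the same.
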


\begin{proof}
Let $(\cal E, \theta)$ be a parabolic system of Hodge bundles of rank $n$ and let $\cal W \subset \cal E$ be a subsystem of Hodge bundles or rank $r$, that is, $\cal W$ is a $\theta$-stable subbundle of $\cal E$. This in particular implies that $(\cal W, \theta|_{\cal W})$ is a system of Hodge bundles in its own right and it has the parabolic structure inherited as a parabolic subbundle. Fix a point $p$ where $\cal E$ has parabolic structure and suppose that the weights of $\cal E$ at $p$ are given by
$$
\alpha_1 \leq \alpha_2 \leq \ldots \leq \alpha_n,
$$
while $\cal W$ has induced weights $w_1 \leq \ldots \leq w_r.$ Applying the parabolic shifting operation at $p$, we have that $(\cal E,\theta)$ becomes $(\tilde{\cal E}, \tilde \theta)$, which is a parabolic system of Hodge bundles where $\deg \tilde{\cal E} = \deg{\cal E} + 1$ with parabolic weights at $p$ given now by
$$
\alpha_n - 1 < \alpha_1 \leq \ldots \leq \alpha_{n-1},
$$
so we have then that $\pardeg \cal E = \pardeg \tilde{\cal E}$ because the $-1$ in $\alpha_n -1$ cancels out with the $+1$ in $\deg \tilde{\cal E} = \deg \cal E + 1$.

Now let $F_1 \subset F_2 \subset \ldots \subset F_n$ be the parabolic flag at $\cal E_p$. One of two things may happen with $(\cal W, \theta|_{\cal W})$ after shifting along $F_1$. First, denote the resulting bundle by $(\tilde{\cal W}, \tilde \theta|_{\cal W})$. If $F_1 \subset \cal W_p$, then this means that after shifting, we have a new bundle $\tilde{\cal W}$ with $\deg \tilde{\cal W} = \deg \cal W + 1$ and $(\tilde{\cal W}, \tilde \theta)$ has parabolic weights at $p$ given by
$$
w_r -1 < w_1 \leq \ldots \leq w_{r-1}.
$$
Again by the same argument for $(\cal E, \theta)$ we have that $\pardeg \cal W = \pardeg \tilde{\cal W}$. If $F_1 \not\subset \cal W_p$, then $(\tilde{\cal W}, \tilde \theta) = (\cal W , \theta)$, so we still have $\pardeg \tilde{\cal W} = \pardeg \cal W$.

Now with a similar argument with the above lemma, one can show that the inverse shifting operation also preserves the property of being a system of Hodge bundles. This means that we have also a one-to-one correspondence between $\theta$-stable subbundles of $\cal E$ and $\tilde \theta$-stable subbundles of $\tilde{\cal E}$ via the shifting operation. We just saw that this shifting operation preserves parabolic degrees, and hence stability of $(\cal E, \theta)$ is preserved by the shifting operation.
\end{proof}

We will analyze this shifting operation again, more explicitly, when we analyze the $(1,2)$ case.

\section{The setup of the general \texorpdfstring{$(1,n)$}{(1,n)} problem}
Here our system of Hodge bundles decompose as $\cal E = \cal L \oplus \cal V$ as above where $\cal L$ is a line bundle and $\rank \cal V = n$ with $\theta (\cal L) \subset \cal V \otimes \Omega^1 (\log S)$. The main difficulty in the $(1,n)$ case is to determine when rank $r$ subbundles $\cal S \subset \cal V$ passes through prescribed Schubert cells in $Gr(r, \cal V_p)$. Since (semi)stability is an open condition, we are allowed to deform to the most generic case possible. In the unitary case, we are able to deform so that our bundle is an evenly split bundle, however, in this $(1,n)$ case, we have $\theta : \cal L \to \cal V \otimes \Omega^1 (\log S)$, which means that $\cal V$ must contain a certain line subbundle (or subsheaf). This becomes an obstruction to deformation, and hence we cannot assume that $\cal V$ is a generic rank $n$ bundle. Furthermore, the condition that $\res_p \theta$ preserves the filtration means that in certain configuration of weights, we need the flags of $\cal V_p$ to contain $\cal L_p$ in some special way, which means that we may not assume our flags to be generic in general. These obstructions means that we cannot apply older techniques of Gromov-Witten theory from the unitary case to this new situation as is. 

Since $\theta : \cal L \to \cal V \otimes \Omega^1 (\log S)$ and we need to consider only those bundles preserved by $\theta$ for stability of $(\cal L \oplus \cal V, \theta)$, there are two types of bundles to consider. 
\begin{enumerate}[label=(\textbf{\Roman*})]
    \item \label{sub of V} Every subbundle $\cal S \subset \cal V$ gets mapped to $0$ by $\theta$ and are in particular $\theta$-stable. Hence, we need to check every subbundle of $\cal V$. 

    \item \label{containing L} The other type of bundle to consider are those bundles containing the $\cal L$ factor in $\cal E = \cal L \oplus \cal V$. Thus, we need to consider subbundles of $\cal V \otimes \Omega^1 (\log S)$ that contains the image of $\cal L$. 
\end{enumerate}
To enumerate bundles of type \ref{sub of V}, we will need to do enumerative geometry involving the nongeneric bundle $\cal V$ equipped with nongeneric (quasi)parabolic structure, i.e. nongeneric flags at points $p \in S$. To enumerate bundles of type \ref{containing L}, we can look at suitable quotient bundles. 

Bundles of type \ref{containing L} must be of form $\cal L \oplus \cal S \subset \cal E = \cal L \oplus \cal V$, where $\cal L$ is the first factor of $\cal E$ and $\cal S$ is a subbundle of $\cal V$. Note that because of our conditions on weights, we may have $\theta : \cal L \to \cal V \otimes \Omega^1 (\log S)$ may not make $\cal L$ into a subbundle. For example, if $\alpha(p)$  are the weights of $\cal L$ and $\beta_1(p) \leq \beta_2 (p) \leq \ldots \beta_n (p)$ are the weights of $V$, and we have that $\beta_n (p) < \alpha(p)$ for some $p \in S$, then we must have that $\theta$ has a zero at $p$, and hence would fail to make $\cal L$ into a subbundle. We can insist on $\theta$ generic enough so that the only zeroes are those where $\beta_n(p) < \alpha(p)$. In this case, suppose that $p_1, \ldots, p_l$ are the points where $\beta_n (p) < \alpha(p)$ and $p_{l+1}, \ldots, p_s$ are the points where $\beta_n (p) \geq \alpha(p)$. Suppose $\cal L \simeq \cal O(-d)$ and let $\hat{\cal L}$ denote the saturation of $\cal L$, then we have that $\hat{\cal L} \simeq \calO(-d + l)$. If $\cal S$ is a subbundle of $\cal V$ such that $\cal S \otimes \Omega^1 (\log S)$ contains $\cal L$, then because $\cal S$ is a subbundle, it must contain the saturation $\hat{\cal L}$ as well. 

Now $\hat{\cal L} \simeq \cal O (-d + l)$ is a subbundle of $\cal V \otimes \Omega^1 (\log S)$ and, by arguments in \cite{belkale2008quantum}, we can assume that $\cal V \otimes \Omega^1 (\log S) / \hat{\cal L}$ is an evenly split bundle. In this case, suppose $\cal V$ has degree $v$, then $\cal V \otimes \Omega^1 (\log S) / \hat{\cal L}$ is an evenly split bundle of degree $v + n(s - 2) + d - l$. The strategy now is to control $\cal S$ by enumerating subbundles of $\cal V \otimes \Omega^1 (\log S) / \hat{\cal L}$ and then taking preimages, since such preimages are exactly those subbundles that contain $\cal L$.

\begin{lem}
\label{controlling subbundles by quotient}
Let $\cal W$ be a vector bundle and $\cal L \subset \cal W$ is a subbundle. Suppose $\cal W$ has flags at point $p$ given by $F_1 \subset F_2 \subset \ldots \subset F_n$. Denote by $\bar{\cal W} = \cal W / \cal L$ and $\bar F_1 \subset \bar F_2 \subset \ldots \subset \bar F_n$ the induced flag. Suppose $\cal S \subset \cal W$ is a subbundle that contains $\cal L$, and denote $\bar{\cal S} = \cal S / \cal L$, the image in the quotient $\bar{\cal W}$. Then
$$
\dim \paren*{\cal S_p \cap F_i} = \dim \paren*{\bar{\cal S}_p \cap \bar F_i} + \dim (F_i \cap \cal L_p).
$$
\end{lem}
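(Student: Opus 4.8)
The plan is to reduce the statement entirely to linear algebra in the fiber $\cal W_p$. First I would observe that since $\cal L$ and $\cal S$ are both subbundles of $\cal W$ with $\cal L \subseteq \cal S$, the sheaf $\cal S / \cal L$ is a torsion-free subsheaf of the vector bundle $\bar{\cal W} = \cal W / \cal L$, hence locally free over the smooth curve; in particular $\cal L$ is a subbundle of $\cal S$ as well. The upshot is that taking fibers at $p$ commutes with quotienting by $\cal L$: writing $q \colon \cal W_p \to \bar{\cal W}_p = \cal W_p / \cal L_p$ for the induced map on fibers, I would have $\bar{\cal S}_p = q(\cal S_p) = \cal S_p / \cal L_p$ and $\bar F_i = q(F_i) = (F_i + \cal L_p)/\cal L_p$, together with a chain of subspaces $\cal L_p \subseteq \cal S_p \subseteq \cal W_p$.

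The key step is then to establish the set-theoretic identity $\bar{\cal S}_p \cap \bar F_i = q(\cal S_p \cap F_i)$. The inclusion $\supseteq$ is immediate from $q(\cal S_p \cap F_i) \subseteq q(\cal S_p) \cap q(F_i)$. For $\subseteq$, I would take $\bar v \in \bar{\cal S}_p \cap \bar F_i$ and write $\bar v = q(s) = q(f)$ with $s \in \cal S_p$ and $f \in F_i$; then $s - f \in \ker q = \cal L_p \subseteq \cal S_p$, so $f = s - (s-f) \in \cal S_p$, whence $f \in \cal S_p \cap F_i$ and $q(f) = \bar v$.

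Finally I would compute dimensions. Applying the rank-nullity identity to the restriction of $q$ to $\cal S_p \cap F_i$, and using $\ker q = \cal L_p$, gives
$$
\dim\paren*{\bar{\cal S}_p \cap \bar F_i} = \dim q(\cal S_p \cap F_i) = \dim\paren*{\cal S_p \cap F_i} - \dim\paren*{\cal L_p \cap \cal S_p \cap F_i}.
$$
Since $\cal L_p \subseteq \cal S_p$, the last term equals $\dim(\cal L_p \cap F_i) = \dim(F_i \cap \cal L_p)$, and rearranging yields the claimed formula.

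As for the main obstacle: there is essentially none of substance here — the only point requiring care is the compatibility of forming the fiber at $p$ with passing to the quotient by $\cal L$ (so that $\bar{\cal S}_p$ really is $\cal S_p/\cal L_p$ and $\bar F_i$ really is the image of $F_i$), which is precisely why one needs $\cal L$ to be a genuine subbundle and not merely a subsheaf. Everything else is a direct dimension count.
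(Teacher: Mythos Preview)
Your proof is correct and follows essentially the same route as the paper: both arguments identify $\bar{\cal S}_p \cap \bar F_i$ with the image $q(\cal S_p \cap F_i)$ (the paper phrases this via the modular-lattice identity $\cal S_p \cap (F_i + \cal L_p) = (\cal S_p \cap F_i) + \cal L_p$, while you do it by element chasing) and then read off the dimension formula via rank--nullity / the second isomorphism theorem. Your extra care about why taking fibers at $p$ commutes with quotienting by $\cal L$ is a nice touch that the paper leaves implicit.
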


\begin{proof}
Consider the intersection $\bar{\cal S}_p \cap \bar F_i$. Writing this out, we get
$$
\bar{\cal S}_p \cap \bar F_i = \frac{\cal S_p}{\cal L_p} \cap \frac{F_i + \cal L_p}{\cal L_p} = \frac{\paren*{\cal S_p \cap F_i + \cal L_p}}{\cal L_p} = \frac{\cal S_p \cap F_i}{F_i \cap \cal L_p}.
$$
Hence we find that $\dim \paren*{\cal S_p \cap F_i} = \dim \paren*{\bar{\cal S}_p \cap \bar F_i} - \dim \paren*{F_i \cap \cal L_p}$.
\end{proof}

\section{The \texorpdfstring{$(1,n)$}{(1,n)} case with generic weights}
In our attack for the $(1,n)$ case, we need to make some extra assumptions on the weights of our bundle. Give $\cal L$ weights $\alpha(p)$ for all $p \in S$ and $\cal V$ weights $\beta_1 (p) < \beta_2 (p) < \ldots < \beta_n (p)$, and fix the degrees of $\cal L$ and $\cal V$. Furthermore, we also ask for the $\alpha$'s to be distinct from the $\beta$'s, that is, we ask for all weights distinct. Suppose that the induced flags on $\cal V$ are called $F_1 (p) \subset F_2 (p) \subset \ldots \subset F_n(p) = \cal V_p$, where each $F_i (p)$ is associated to weight $\beta_{n - i + 1} (p)$ (remember the weights and flags have reverse ordering).

Because we assumed that all the weights of $\cal E$ are distinct, the shifting operation works in the exact same way as in the unitary case. And hence we still have a correspondence between subbundles of $\cal E$ and subbundles of $\tilde{ \cal E}$. In this case, there is no trouble if we just shift our weights to be such that
$$
\alpha(p) < \beta_1 (p) < \ldots < \beta_n(p)
$$
for all $p \in S$. In this case, we have that $\theta(\cal L)_p \subset F_n(p)$ for all $p$, and hence we can give $\cal V$ generic flags. Furthermore, since we have that $\alpha(p) < \beta_1 (p) < \beta_n(p)$ for all $p$, we necessarily have that $\theta$ has poles at every $p \in S$, and hence we will have no subtleties in terms of $\theta$ not making $\cal L$ into a subbundle in this case. Thus, we assume further that $\theta$ makes $\cal L$ into a subbundle of $\cal V \otimes \Omega^1 (\log S)$.

Set $\cal W = \cal V \otimes \Omega^1 (\log S)$. For simplicity, suppose that $\deg \cal W = \deg \paren*{\cal V \otimes \Omega^1 (\log S)} = 0$, since the other cases will not be substantially different; the only difference would be, if $\deg \cal W = w$, we would compare $\deg \cal L$ with $\ceil{w/n}$ instead of 0. There are now two situations to consider:
\begin{enumerate}[label=(\textbf{\Alph*})]
    \item $\deg \cal L \leq 0$, then in this case case, we can assume $\cal L \simeq \cal O(-d)$ for $d \geq 0$ and, in this case, we can deform $\cal W$ along with $\cal L$ to be an evenly split bundle, so we can assume $\cal W \simeq \calO^{\oplus n}$. So in this case we have $\calO(-d) \subset \cal O^{\oplus n}$. \label{small sub}

    \item If $\deg \cal L > 0$, so say $\cal L \simeq \cal O(d)$ for $d > 0$. In this case the most generic $\cal W$ (along with $\cal L$) we can deform to is $\cal W \simeq \calO (d) \oplus \cal G_{-d, n-1}$ (see Lemma \ref{lem: case b deformation}), where $\cal G_{-d, n-1}$ denotes the evenly split degree $-d$ rank $n-1$ bundle, so we can assume $\cal W$ is of this form already. \label{large sub}
\end{enumerate}
From here on, we assume that $\cal E$ has been deformed to the most generic case possible as described in the two cases above.

The case when $\deg \cal L \leq 0$ has already been solved before. In this case, we are left with counting subbundles of an evenly split bundle $\cal W$ meeting generic flags with prescribed incidence conditions, which has been solved using quantum cohomology of Grassmannians. So the interesting case is exactly the case when we have a large subbundle. 

The main goal of this section is to prove Theorem \ref{thm:(1,n)}. Recall the statement of the main theorem. Recall that $\pardeg \cal L + \pardeg \cal V = \pardeg \cal E$, and so fixing $\pardeg \cal L$ and $\pardeg \cal V$ automatically fixes $\pardeg \cal E$. Recall that we also made the simplifying assumption that $\deg \cal W = 0$.
\begin{thm*}
    A semistable system of Hodge bundles of type $(1,n)$ with weights $\alpha (p)$ and $\beta_1 (p) < \beta_2 (p) < \ldots < \beta_n (p)$ as above with fixed $\deg \cal L$ and $\deg \cal V$ exist if and only if the following conditions in the different cases of bundles are satisfied. 

\begin{enumerate}
        \item[\ref{small sub}] If $\deg \cal L \leq 0$, then the following two types of numerical conditions must be met.
        
        \begin{itemize}
        \item[\ref{sub of V}] For every $I^{p_1}, \ldots, I^{p_s} \subset [n]$ of size $r$ such that the usual Gromov-Witten number 
        $$
        \braket{\sigma_{I^{p_1}}, \ldots, \sigma_{I^{p_s}}}_\delta \neq 0,
        $$
        the inequality
        $$
        \frac{-\delta + r(2 - s) + \sum_{p \in S} \sum_{i \in I^p} \beta_{n-i+1} (p)}{r} \leq \frac{\pardeg \cal E}{n+1}
        $$
        holds.

        \item[\ref{containing L}] For every $J^{p_1}, \ldots, J^{p_s} \subset [n-1]$ of size $r-1$ such that the generalized Gromov-Witten number $\braket{\sigma_{J^{p_1}}, \ldots,  \sigma_{J^{p_s}}}_{\delta, d} \neq 0$, the inequality
        $$
        \frac{2d - \delta + r(2 - s) + \sum_{p \in S} \paren*{\alpha(p) + \beta_1(p) + \sum_{j \in J^p} \beta_{n-j+1} (p)}}{r+1} \leq \frac{\pardeg \cal E}{n+1}
        $$
        holds.
        \end{itemize}

        \item[\ref{large sub}] If $\deg \cal L > 0$, then the following two types of numerical conditions must be met. 
        \begin{itemize}
        \item[\ref{sub of V}] For every $I^{p_1}, \ldots, I^{p_s} \subset [n]$ of size $r$ such that $\braket{\sigma_{I^{p_1}}, \ldots, \sigma_{I^{p_s}}}_\delta \neq 0$, the inequality
        $$
        \frac{- \delta + r(2-s) + \sum_{p \in S} \sum_{i \in I^p} \beta_{n-i+1} (p)}{r} \leq \frac{\pardeg \cal E}{n+1}
        $$
        is satisfied.
        
        \item[\ref{containing L}] For every $J^{p_1}, \ldots, J^{p_s} \subset [n-1]$ of size $r-1$ such that $\braket{\sigma_{J^{p_1}}, \ldots, \sigma_{J^{p_s}}}_{\delta, d - w} \neq 0$, then the inequality 
        $$
        \frac{2d - \delta + r(2-s) + \sum_{p \in S} \paren*{\alpha(p) + \beta_1(p) + \sum_{j \in J^p} \beta_{n-j+1}(p)}}{r+1} \leq \frac{\pardeg \cal E}{n+1}
        $$
        is satisfied.
    \end{itemize}
    \end{enumerate}
\end{thm*}
We will prove this theorem by proving Theorem \ref{thm:small sub existence} which covers Case \ref{small sub} and Theorem \ref{thm:large sub existence} which covers Case\ref{large sub}, from which the result follows as an immediate corollary.

\subsection{Case where \texorpdfstring{$\cal L \subset \cal W$}{L in W} deforms to a generic bundle}
We will now tackle the Case \ref{small sub} situation. Recall that to check stability for $(\cal E, \theta)$, $\cal E = \cal L \oplus \cal V$, we need to check for two types of bundles: bundles that contain $\cal L$ and bundles that are contained purely in $\cal V$. To control bundles that contain $\cal L$, note that such bundles take the form
$$
\cal L \oplus \cal S \subset \cal E = \cal L \oplus \cal V
$$
where $\cal S \subset \cal V$ is a subbundle where $\theta (\cal L) \subset \cal S \otimes \Omega^1 (\log S)$. To enumerate such bundles, we can use the strategy outlined above by looking at subbundles $\cal V \otimes \Omega^1 (\log S)$ quotiented out by $\cal L$, which is given by $\cal G_{-d, n-1}$.

Let $F_\bullet (p)$ denote the flags of $\cal W$ at the point $p \in S$. Since we asked for $\alpha(p) < \beta_1 (p)$ for all $p$, we always have $\theta (\cal L)_p \subset F_n (p)$. And so the induced flag on $\bar{\cal W} := \cal W/\theta(\cal L) \simeq \cal G_{-d, n-1}$ for each $p \in S$ is given by $\bar F_1(p) \subset \bar F_2(p) \subset \ldots \subset \bar F_{n-1} (p)$ with associated weights $\beta_n (p) > \beta_{n-1} (p) > \ldots > \beta_2 (p)$ respectively. For subset $J^p = \set{j^p_1 < \ldots < j^p_{r-1}} \subset [n-1]$, we know there will be a subbundle $\bar{\cal S}$ of $\cal G_{-d,n-1}$ of degree $-\delta$ and rank $r - 1$ meeting $\bar F_\bullet (p)$ such that $\dim \bar{\cal S}_p \cap \bar F_{j^p_a} (p) \geq a$ if the generalized Gromov-Witten number (see \cite{belkale2022rigid} Definition 4.3.1) $\braket{\sigma_{J^1}, \ldots, \sigma_{J^s}}_{\delta, -d} \neq 0.$ Because of our arguments in Section \ref{HN filt}, it suffices to just check the cases where these Gromov-Witten numbers are nonzero.

\begin{prop}
    With the assumptions above, if $J^{p_1}, \ldots, J^{p_s} \subset [n-1]$ are cardinality $r-1$ subsets such that $\braket{\sigma_{J^{p_1}}, \ldots, \sigma_{J^{p_s}}}_{\delta, d} \neq 0$, then the inequality
    $$
    \frac{-2d - \delta + r(2 - s) + \sum_{p \in S} \paren*{\alpha(p) + \beta_1(p) + \sum_{j \in J^p} \beta_{n-j+1} (p)}}{r+1} \leq \frac{\pardeg \cal E}{n+1}
    $$
    is a necessary inequality  for the semistability of the parabolic system of Hodge bundles $(\cal E, \theta)$.
\end{prop}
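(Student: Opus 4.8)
The plan is to convert the nonvanishing of the generalized Gromov--Witten number into an explicit $\theta$-invariant subsystem of Hodge bundles $\cal L \oplus \cal S \subset \cal E$ whose parabolic slope equals the left-hand side of the asserted inequality; semistability of $(\cal E,\theta)$ then forces that inequality.

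First I would use $\braket{\sigma_{J^{p_1}}, \ldots, \sigma_{J^{p_s}}}_{\delta, d} \neq 0$ to extract a rank $r-1$, degree $-\delta$ subbundle $\bar{\cal S} \subset \bar{\cal W} = \cal W / \theta(\cal L) \simeq \cal G_{-d,n-1}$ with $\dim(\bar{\cal S}_p \cap \bar F_{j^p_a}(p)) \geq a$ for all $p \in S$ and $1 \leq a \leq r-1$. After a generic deformation of the data these inequalities become equalities, so the parabolic structure induced on $\bar{\cal S}$ from $\bar{\cal W}$ has weights $\beta_{n-j+1}(p)$, $j \in J^p$, at each $p$. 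Next I would let $\cal S' \subset \cal W$ be the preimage of $\bar{\cal S}$ under $\cal W \to \bar{\cal W}$; it is a rank $r$ subbundle fitting into $0 \to \theta(\cal L) \to \cal S' \to \bar{\cal S} \to 0$, so $\deg \cal S' = -d - \delta$ (recall $\theta$ makes $\cal L \simeq \cal O(-d)$ a subbundle, hence $\deg \theta(\cal L) = -d$). Twisting down, $\cal S := \cal S' \otimes (\Omega^1(\log S))^\vee \simeq \cal S' \otimes \cal O(2-s)$ is a rank $r$ subbundle of $\cal V$ of degree $-d - \delta + r(2-s)$, and $\cal L \oplus \cal S$ is $\theta$-invariant since $\theta(\cal L) \subset \cal S' = \cal S \otimes \Omega^1(\log S)$ by construction while $\theta|_{\cal V} = 0$; thus it is a legitimate subsystem of Hodge bundles against which semistability must be tested.

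The heart of the argument is the weight bookkeeping for $\cal S$ inside $\cal V$. Since twisting by a line bundle is a local isomorphism at each $p$, it does not change flag positions, so it suffices to read off how $\cal S'_p$ meets the flag $F_\bullet(p)$ of $\cal W_p$. Here the hypothesis $\alpha(p) < \beta_1(p)$ enters: it guarantees $\theta(\cal L)_p \subset F_n(p)$, and generically $\theta(\cal L)_p \not\subset F_{n-1}(p)$, so $\dim(F_i(p) \cap \theta(\cal L)_p)$ is $0$ for $i \leq n-1$ and $1$ for $i = n$. Lemma~\ref{controlling subbundles by quotient} then gives $\dim(\cal S'_p \cap F_{j^p_a}(p)) = a$ for $j^p_a \leq n-1$ and $\dim(\cal S'_p \cap F_n(p)) = r$, i.e. the Schubert position of $\cal S'_p$ is $J^p \cup \set{n}$, so the induced weights of $\cal S$ at $p$ sum to $\beta_1(p) + \sum_{j \in J^p}\beta_{n-j+1}(p)$. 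Combining with $\pardeg \cal L = -d + \sum_{p \in S}\alpha(p)$ yields
$$
\pardeg(\cal L \oplus \cal S) = -2d - \delta + r(2-s) + \sum_{p \in S}\paren*{\alpha(p) + \beta_1(p) + \sum_{j \in J^p}\beta_{n-j+1}(p)},
$$
with $\rank(\cal L \oplus \cal S) = r+1$, and imposing $\parmu(\cal L \oplus \cal S) \leq \parmu \cal E = \pardeg \cal E/(n+1)$ gives precisely the claimed inequality.

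The step I expect to require the most care is this last weight computation: one must check that a single generic deformation simultaneously makes all the intersection-dimension inequalities sharp, keeps $\cal W$ (hence $\bar{\cal W} \simeq \cal G_{-d,n-1}$) of the prescribed generic type with $\cal L$ still a subbundle, and that the ``largest admissible weight'' convention in the induced parabolic structure produces exactly the above sums rather than mere upper bounds. Everything else is a direct assembly of Lemma~\ref{controlling subbundles by quotient}, the definition of the generalized Gromov--Witten number, and the definition of parabolic semistability.
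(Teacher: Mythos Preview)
Your proposal is correct and follows essentially the same approach as the paper: extract $\bar{\cal S}\subset\cal G_{-d,n-1}$ from the nonvanishing Gromov--Witten number, lift to $\cal S'\subset\cal W$ containing $\theta(\cal L)$, twist down to $\cal S\subset\cal V$, and test semistability against $\cal L\oplus\cal S$. Your treatment is in fact more careful than the paper's on the weight bookkeeping---you explicitly invoke Lemma~\ref{controlling subbundles by quotient} and note the genericity needed to sharpen the Schubert inequalities, whereas the paper simply asserts that $\cal S(2-s)$ receives the weights $\beta_1(p)$ and $\beta_{n-j+1}(p)$ for $j\in J^p$.
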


\begin{proof}
As above, we let $\bar{\cal S}$ be the subbundle of $\cal G_{-d, n-1} \simeq \bar{\cal W}$ guaranteed by the nonvanishing of the generalized Gromov-Witten number $\braket{\sigma_{J^{p_1}}, \ldots \sigma_{J^{p_s}}}_{\delta, d}$. This lifts to a subbundle of $\cal W = \cal V \otimes \Omega^1 (\log S)$ containing $\theta(\cal L)$ of rank $r$; call this subbundle $\cal S$. Then $\cal L \oplus \cal S(2-s) \subset \cal E$ will be a $\theta$-stable subbundle of rank $r+1$, and hence we need to check $\parmu \cal L \oplus \cal S(2-s)$ against $\parmu \cal E$. 

Firstly, note that $\deg \bar{\cal S} = - \delta$ and $\bar{\cal S} \simeq \cal S / \cal L \simeq \cal S / \cal O(-d)$, this gives us that $\deg \cal S = - \delta - d$, and hence $\deg \cal S(2-s) = -d - \delta + r (2-s)$. Hence we get that
$$
\deg (\cal L \oplus \cal S(2-s)) = -2d - \delta + r(2 - s).
$$
Note that we can always take $\theta (\cal L)$ to be contained in $F_n(p)$ for all $p \in S$, this means that we can give $\cal S(2-s)$ the weights $\beta_1(p)$ at all times, and the other weights must come exactly by how $\bar{\cal S}$ meets $\bar F_1(p) \subset \ldots \subset \bar F_{n-1}(p)$ at each $p \in S$, i.e. the weights of $\cal S(2-s)$ are exactly given by $\beta_1(p)$ and $\beta_{n-j+1} (p)$ for $j \in J^p$ for each $p \in S$.
\end{proof}

Next we need to enumerate the subbundles purely of $\cal V$, equivalently those purely of $\cal W$. In such a case, we can simply use regular Gromov-Witten numbers to count such subbundles. 

\begin{prop}
    Let $I^p = \set{I^p_1 < I^p_2 < \ldots < I^p_r} \subset [n]$ be size $r$ subsets for $p \in S$. If $\braket{\sigma_{I^{p_1}}, \ldots, \sigma_{I^{p_s}}}_\delta \neq 0$, then the inequality
    $$
    \frac{-\delta + r(2-s) + \sum_{p \in S} \sum_{i \in I^p} \beta_{n-i+1} (p)}{r} \leq \frac{\pardeg \cal E}{n+1}
    $$
    is a necessarily inequality for the semistability of $(\cal E, \theta)$. 
\end{prop}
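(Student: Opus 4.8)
The plan is to convert the nonvanishing of $\braket{\sigma_{I^{p_1}}, \ldots, \sigma_{I^{p_s}}}_\delta$ into an explicit $\theta$-invariant subbundle $\cal S \subset \cal V \subset \cal E$ of rank $r$ whose parabolic slope is at least the left-hand side of the claimed inequality. Since $\cal S$ lies inside $\cal V$ it is killed by $\theta$ (here $N=2$, so $\theta|_{\cal V}=0$), hence is $\theta$-invariant and of type \ref{sub of V}; semistability of $(\cal E, \theta)$ then forces $\parmu \cal S \le \parmu \cal E = \pardeg \cal E /(n+1)$, and dividing our estimate of $\pardeg \cal S$ by $r$ gives exactly the stated inequality.

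First I would extract the subbundle. In Case \ref{small sub} we have $\cal W = \cal V \otimes \Omega^1(\log S) \simeq \cal O^{\oplus n}$, and, because $\alpha(p) < \beta_1(p)$ forces $\theta(\cal L)_p \subset F_n(p)$ for every $p$, the flags $F_\bullet(p)$ on $\cal V$ (equivalently on $\cal W$) may be taken generic. Degree-$\delta$ maps $\PP^1 \to Gr(r,n)$ correspond to rank-$r$, degree-$(-\delta)$ subbundles of $\cal O^{\oplus n} \simeq \cal W$, and the incidence $f(p) \in \Omega_{I^p}(F_\bullet(p))$ becomes $\cal T_p \in \Omega_{I^p}(F_\bullet(p))$. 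By the Harder--Narasimhan discussion of Section \ref{HN filt} it suffices to treat the case where this count equals $1$, where the unique contributing stable map has irreducible domain and is an honest map; thus $\braket{\sigma_{I^{p_1}}, \ldots, \sigma_{I^{p_s}}}_\delta \ne 0$ produces a genuine subbundle $\cal T \subset \cal W$ of rank $r$, degree $-\delta$, with $\cal T_p \in \Omega_{I^p}(F_\bullet(p))$ for all $p$.

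Next, set $\cal S := \cal T \otimes (\Omega^1(\log S))^\vee \subset \cal W \otimes (\Omega^1(\log S))^\vee = \cal V$; since $\Omega^1(\log S) \simeq \cal O(s-2)$ this is a rank-$r$ subbundle of degree $-\delta + r(2-s)$. For the induced parabolic weights, note that a local trivialization of $\Omega^1(\log S)$ at $p$ identifies $\cal S_p$ with $\cal T_p$ and the flag on $\cal V_p$ with the flag on $\cal W_p$, where $F_i(p)$ carries weight $\beta_{n-i+1}(p)$. Writing $I^p = \set{i^p_1 < \ldots < i^p_r}$, the condition $\cal T_p \in \Omega_{I^p}(F_\bullet(p))$ means $\dim(\cal S_p \cap F_{i^p_j}(p)) \ge j$, so the $j$-th jump of $i \mapsto \dim(\cal S_p \cap F_i(p))$ occurs at an index $\le i^p_j$; as the $\beta$'s are increasing, the induced weight contributed by that jump is $\ge \beta_{n-i^p_j+1}(p)$. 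Summing over $j$ and $p$ yields $\pardeg \cal S \ge -\delta + r(2-s) + \sum_{p\in S}\sum_{i\in I^p}\beta_{n-i+1}(p)$ (with equality when $\cal T$ is chosen generically within its Schubert cell), and combining this with $\parmu \cal S \le \pardeg \cal E/(n+1)$ finishes the proof, exactly parallel to the type \ref{containing L} proposition just proved.

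The step I expect to be the main obstacle is the first one: justifying that a nonvanishing (stable-map) Gromov--Witten number actually yields an \emph{honest} degree-$\delta$ subbundle meeting our particular generic flags, rather than a stable map with reducible domain carrying no subbundle interpretation. This is exactly where the reduction to Gromov--Witten number $1$ via the uniqueness of the maximally destabilizing term of the Harder--Narasimhan filtration (Section \ref{HN filt}) is essential; once that reduction is in hand, the degree and weight bookkeeping in the remaining steps is routine.
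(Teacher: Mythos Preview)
Your proposal is correct and follows essentially the same route as the paper: use the nonvanishing Gromov--Witten number to produce a rank-$r$, degree-$(-\delta)$ subbundle of $\cal W \simeq \cal O^{\oplus n}$ meeting the generic flags as prescribed, twist by $\cal O(2-s)$ to land in $\cal V$, and read off the semistability inequality. You are in fact more careful than the paper on two points it glosses over---the passage from stable maps to honest subbundles (via the Harder--Narasimhan reduction to count $1$) and the fact that the induced weights give an inequality $\pardeg \cal S \geq \cdots$ rather than an a priori equality---but the underlying argument is the same.
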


\begin{proof}
    Since $\cal W = \cal V \otimes \Omega^1(\log S)$ is the trivial rank $n$ bundle, subbundles of this are exactly enumerated by Gromov-Witten invariants, i.e. $\braket{\sigma_{I^{p_1}}, \ldots, \sigma_{I^{p_s}}}_\delta$ counts the number of degree $-\delta$ subbundles of $\cal W$. If this is nonzero, then we have a subbundle $\cal S \subset \cal W$ such that $\cal S_p$ meets the flag $F_\bullet (p)$ of $\cal W_p$ at $j^p_a$ for $a = 1, \ldots, r$, which would give $\cal S$ weights $\beta_{n-j+1} (p)$ for $j = j^p_1, j^p_2, \ldots, j^p_r$. Now $\cal S$ is a subbundle of $\cal W$ if and only if $\cal S(2-s)$ is a subbundle of $\cal V$, so then writing down the semistability condition for $\cal S(2-s)$ gives us the inequality we desire. 
\end{proof}

Combining the two propositions above together, we get the following theorem.
\begin{thm}
\label{thm:small sub existence}
    A semistable $(1,n)$ parabolic system of Hodge bundles $(\cal E, \theta)$, with parabolic weights $\set{\alpha(p), \beta_i(p)}_{i \in [n],p \in S}$ given as above and of Case \ref{small sub} exists if and only if the following conditions hold:
    \begin{itemize}
        \item[\ref{sub of V}] For every $I^{p_1}, \ldots, I^{p_s} \subset [n]$ of size $r$ such that the usual Gromov-Witten number 
        $$
        \braket{\sigma_{I^{p_1}}, \ldots, \sigma_{I^{p_s}}}_\delta \neq 0,
        $$
        the inequality
        $$
        \frac{-\delta + r(2 - s) + \sum_{p \in S} \sum_{i \in I^p} \beta_{n-i+1} (p)}{r} \leq \frac{\pardeg \cal E}{n+1}
        $$
        holds.
        
        \item[\ref{containing L}] For every $J^{p_1}, \ldots, J^{p_s} \subset [n-1]$ of size $r-1$ such that the Generalized Gromov-Witten number $\braket{\sigma_{J^{p_1}}, \ldots,  \sigma_{J^{p_s}}}_{\delta, d} \neq 0$, the inequality
        $$
        \frac{-2d - \delta + r(2 - s) + \sum_{p \in S} \paren*{\alpha(p) + \beta_1(p) + \sum_{j \in J^p} \beta_{n-j+1} (p)}}{r+1} \leq \frac{\pardeg \cal E}{n+1}
        $$
        holds.
    \end{itemize}
\end{thm}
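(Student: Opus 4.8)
The ``only if'' direction is already in hand: the two propositions just proved show that, for the generic $(\cal E,\theta)$ of Case~\ref{small sub}, semistability forces each of the displayed inequalities (one for every nonvanishing ordinary or generalized Gromov--Witten number). Since (semi)stability is an open condition and, after the deformations of Case~\ref{small sub}, the data $(\cal O(-d)\hookrightarrow\cal O^{\oplus n},\ \theta,\ \{F_\bullet(p)\}_{p\in S})$ ranges over an irreducible family (a Quot scheme together with a product of flag varieties, irreducible as noted in Section~\ref{section:general ag prob}), the existence of \emph{any} semistable $(\cal E,\theta)$ with the prescribed numerical data forces the generic one to be semistable, and the two propositions applied to it give every inequality. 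So the content is the converse.

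To prove sufficiency I would assume all the listed inequalities hold and show the generic $(\cal E,\theta)$ of Case~\ref{small sub} is semistable; openness then yields a semistable representative. Suppose not. By Section~\ref{HN filt}, $(\cal E,\theta)$ has a unique maximal-slope Higgs subbundle $\cal E_h$, and since $(\cal E,\theta)$ is a $\C^\times$-fixed point, $\cal E_h$ is $\C^\times$-invariant, hence a subsystem of Hodge bundles; thus $\cal E_h$ is of type~\ref{sub of V}, say $\cal E_h=\cal S(2-s)$ for a rank-$r$ degree-$(-\delta)$ subbundle $\cal S\subset\cal W=\cal O^{\oplus n}$, or of type~\ref{containing L}, say $\cal E_h=\cal L\oplus\cal S(2-s)$ with $\theta(\cal L)\subset\cal S\subset\cal W$ of rank $r$; in either case $\parmu\cal E_h>\parmu\cal E$. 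In the first case, letting $I^p\subset[n]$ be the jump set of $\cal S_p$ relative to the generic flag $F_\bullet(p)$, the induced weights of $\cal E_h$ at $p$ are exactly $\{\beta_{n-i+1}(p):i\in I^p\}$, so $\parmu\cal E_h=\frac1r\paren*{-\delta+r(2-s)+\sum_{p\in S}\sum_{i\in I^p}\beta_{n-i+1}(p)}$. Because $\cal E_h$ is the \emph{unique} subbundle of $\cal V$ of maximal parabolic slope (and every rank-$r$ degree-$(-\delta)$ subbundle of $\cal W$ with incidence $I^p$ has the same parabolic slope), the locus of such subbundles of the trivial bundle meeting the generic flags is a single point, which forces $\braket{\sigma_{I^{p_1}},\dots,\sigma_{I^{p_s}}}_\delta=1\ne 0$ — this is exactly the ``reduce to the count-one case'' observation of Section~\ref{HN filt}. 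The hypothesis then gives $\parmu\cal E_h\le\parmu\cal E$, a contradiction. In the second case I would run the same argument after passing to $\bar{\cal W}:=\cal W/\theta(\cal L)$: since $\alpha(p)<\beta_1(p)$ gives $\theta(\cal L)_p\subset F_n(p)$, and (by the deformation argument of \cite{belkale2008quantum}) $\bar{\cal W}$ is evenly split with induced generic flags $\bar F_\bullet(p)$, one reads off via Lemma~\ref{controlling subbundles by quotient} the jump sets $J^p\subset[n-1]$ of $\bar{\cal S}:=\cal S/\theta(\cal L)\subset\bar{\cal W}$ (of rank $r-1$, some degree $-\delta$), obtaining $\rank\cal E_h=r+1$ and $\pardeg\cal E_h=-2d-\delta+r(2-s)+\sum_{p\in S}\paren*{\alpha(p)+\beta_1(p)+\sum_{j\in J^p}\beta_{n-j+1}(p)}$; uniqueness of $\cal E_h$ again forces $\braket{\sigma_{J^{p_1}},\dots,\sigma_{J^{p_s}}}_{\delta,d}=1$, and the matching hypothesis contradicts $\parmu\cal E_h>\parmu\cal E$.

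The main obstacle is upgrading ``$\cal E_h$ exists for our generic data'' to ``the relevant (generalized) Gromov--Witten invariant is nonzero''. This rests on three ingredients: that the maximal destabilizing subsheaf of the $\C^\times$-fixed $(\cal E,\theta)$ is itself $\C^\times$-invariant, so the two cases above are genuinely exhaustive; a transversality statement for evenly split bundles with generic flags — that incidence loci of subbundles have the expected dimension and, in the expected-dimension-zero case, are enumerated by the (generalized) Gromov--Witten numbers — so that uniqueness of $\cal E_h$ really forces a nonvanishing invariant rather than merely a nonempty positive-dimensional locus; and, in case~\ref{containing L}, the deformation making $\cal W/\theta(\cal L)$ evenly split together with the Lemma~\ref{controlling subbundles by quotient} bookkeeping that identifies the induced parabolic weights of $\cal S$ with the data $(\delta,d,\{J^p\})$ and tracks the $r(2-s)$ twist coming from $\cal V=\cal W(2-s)$. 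Everything else reduces to the already-established enumerative geometry of Grassmannians and to the parabolic-degree computations carried out in the two propositions.
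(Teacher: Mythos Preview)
Your proposal is correct and follows essentially the same approach as the paper. The paper's own ``proof'' of this theorem is a single sentence (``Combining the two propositions above together, we get the following theorem''), which on its face only records necessity; the sufficiency direction is left implicit via the Harder--Narasimhan discussion in Section~\ref{HN filt}, and your write-up spells out exactly that argument --- the unique maximal destabilizer is a subsystem of Hodge bundles of one of the two types, and its uniqueness forces the corresponding (generalized) Gromov--Witten number to be one --- in more detail than the paper does.
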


\subsection{Case when the degree of \texorpdfstring{$\cal L$}{L} is large}
In Case \ref{large sub}, when $\deg \cal L > 0$, this requires more care. Here, we can deform $\cal L \subset \cal W$ so that $\cal W \simeq \calO(d) \oplus \cal G_{-d, n-1}$ by the following argument. In the below argument we do not make any assumptions on $\deg \cal W = 0$, which again will not make a difference.

\begin{lem}
\label{lem: case b deformation}
    In Case \ref{large sub}, we can deform $\cal L \subset \cal W$ so that $\cal W \simeq \cal L \oplus \cal G$, where $\cal G$ is an evenly split rank $n-1$ bundle.
\end{lem}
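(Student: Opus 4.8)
The plan is to realize a line subbundle $\cal L = \cal O(d) \hookrightarrow \cal W$ (write $d := \deg \cal L$ and $w := \deg \cal W$; we are in Case \ref{large sub}, so $d > \ceil{w/n}$) via the short exact sequence
$0 \to \cal L \to \cal W \to \cal Q \to 0$
with $\cal Q$ a rank $n-1$ bundle of degree $w-d$, and then to deform this datum in two steps: first split the extension, then balance the quotient. For the first step I would use the one-parameter family over $\mathbb A^1$ whose fibre at $t$ is the extension of $\cal Q$ by $\cal L$ with class $t\,\xi$, where $\xi \in \op{Ext}^1(\cal Q,\cal L)$ is the class of $\cal W$: every fibre is a rank $n$ bundle containing $\cal L$ as a \emph{subbundle} (the quotient stays locally free), the fibres over $t\neq 0$ are all isomorphic to $\cal W$, and the central fibre is $\cal L \oplus \cal Q$. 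So inside the moduli of pairs $\cal L \subset \cal W$ with $\deg\cal L,\deg\cal W$ fixed we may replace $\cal W$ by $\cal L \oplus \cal Q$.

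For the second step, deform $\cal Q$ inside the family of rank $n-1$, degree $w-d$ bundles to the evenly split bundle $\cal G$, the generic such bundle; taking the direct sum of this family with $\cal L$ gives a family of rank $n$ bundles each containing $\cal L$ as a direct summand (hence a subbundle), interpolating between $\cal L\oplus\cal Q$ and $\cal L\oplus\cal G$. Chaining the two families shows $\cal L\subset\cal W$ is deformation equivalent, within this moduli space, to $\cal O(d)\subset\cal O(d)\oplus\cal G$, which is the assertion. To be entitled to replace the given datum by this one in the existence problem, one must further know $\cal O(d)\oplus\cal G$ is the \emph{generic} member; since the moduli space is irreducible (a Quot scheme over $\PP^1$, as in Section \ref{section:general ag prob}), it suffices to exhibit the pairs $\cal O(d)\oplus\cal G$ as an open dense locus. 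The cleanest route: the locus where $\cal Q$ is evenly split is open dense, and over it the extension is forced to be trivial, because $\op{Ext}^1(\cal G,\cal L) = H^1(\PP^1,\cal G^\vee(d)) = 0$ — and this vanishing is exactly where $d > \ceil{w/n}$ is used: it forces every summand $\cal O(g_j)$ of $\cal G$ to have $g_j \le d-1$ (arithmetic: $d > \ceil{w/n}$ gives $(n-1)d \ge w-d+n$, hence $\ceil{(w-d)/(n-1)} \le d-1$), so each $H^1(\cal O(d-g_j))=0$.

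The main obstacle is this last genericity claim rather than the deformations: I must be sure balancing the quotient does not overshoot what the constraint ``$\cal W$ contains $\cal O(d)$ as a subbundle'' allows, and that $\cal L$ stays the canonical maximal-degree sub-line-bundle along the way. The arithmetic above handles the first point: with all summands of $\cal G$ of degree $\le d-1$, the bundle $\cal O(d)\oplus\cal G$ is as balanced as a rank $n$ bundle admitting $\cal O(d)$ as a subbundle can be, and $h^0(\cal W(-d))$ takes its minimal value $1$ there, an open condition. For the second point, on the open locus $h^0(\cal W(-d))=1$ the degree-$d$ sub-line-bundle is unique and automatically saturated, so $\cal W\mapsto \cal W/\cal L$ is a well-defined family of rank $n-1$ bundles there, and ``$\cal W/\cal L$ evenly split'' is again an open dense condition on that locus — completing the identification of the generic member with $\cal O(d)\oplus\cal G$.
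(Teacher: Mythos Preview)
Your argument is correct and shares the paper's core: both identify the open locus where $\cal W/\cal L$ is evenly split and then use the vanishing $\Ext^1(\cal G,\cal L)=0$ (from the arithmetic $d>\ceil{w/n}\Rightarrow \ceil{(w-d)/(n-1)}\le d-1$) to force the extension to split on that locus. The paper's wrapping is slightly different: it embeds $\cal L\subset\cal W\subset\cal O^{\oplus N}$ after a twist and invokes Kim's irreducibility of the \emph{flag} Quot scheme to conclude the open locus is dense, whereas you give an explicit two-step deformation (scale the extension class, then balance the quotient) and then characterize the generic locus via semicontinuity of $h^0(\cal W(-d))$. Your constructive route makes the deformation visible without appealing to irreducibility of any moduli space, and the $h^0(\cal W(-d))=1$ criterion is a clean way to see both that $\cal L$ remains the unique saturated degree-$d$ sub-line-bundle and that $\cal O(d)\oplus\cal G$ is indeed the generic member; the paper's flag-Quot argument, on the other hand, handles irreducibility of the ambient parameter space in one stroke. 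One small caveat: your parenthetical ``a Quot scheme over $\PP^1$, as in Section~\ref{section:general ag prob}'' is loose, since the moduli of pairs $\cal L\subset\cal W$ with both varying is not literally a Quot scheme---the paper's flag-Quot embedding is the precise version of this---but your own two-step family already does the work, so you do not actually need that reference.
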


\begin{proof}
    After sufficiently twisting $\cal L \subset \cal W$ by $\cal O(-m)$ for $m >\!\!>0$, we can say that $\cal L \subset \cal W \subset \cal O^{\oplus N}$ for some $N$. Thus, we can view $\cal L \subset \cal W \subset \cal O^{\oplus N}$ as some element of a flag quot scheme $\cal Q$. Kim \cite{kim1996gromov} showed that such quot schemes are irreducible, so if we can show the locus where $\cal L \subset \cal W \subset \cal O^{\oplus N}$ is such that $W \simeq L \oplus \cal G$ where $\cal L \subset \cal W$ sits inside $\cal W$ as the first factor is open in this quot scheme, then we are done because semistability is an open condition and in an irreducible space, the intersection of two nonempty open sets is still nonempty open.

    Consider the bundle $\cal U$ over $\PP^1 \times \cal Q$ defined such that if $q$ represents the point $\cal L \subset \cal W$, 
    $$
    \cal U |_{\PP^1 \times q} = \cal U |_q \simeq \cal W / \cal L.
    $$
    It is a standard fact then that the locus in $\cal Q$ where $\cal U|_q$ splits as an evenly split bundle is open in $\cal Q$, say $\cal U|_q \simeq \cal G$. So we can restrict to looking at $0 \to \cal L \to \cal W \to \cal G \to 0.$
    If we can show that $\Ext^1 (\cal G, \cal L) = H^1 (\PP^1, \cal G^\vee \otimes \cal L) = 0$, then every extension $0 \to \cal L \to \cal W \to \cal G \to 0$ is trivial, thus split, and hence we must have $\cal W \simeq \cal L \oplus \cal G$.

    Say that $\deg \cal L = d$ and $\deg \cal W = w$, $\rank \cal W = n$. We know that $\cal G$ must have degree $w-d$ and rank $n-1$, so then $\cal G \simeq \bigoplus_{i=1}^{n-1} \cal O(e_i)$ where each $|e_i - e_j| \leq 1$. This forces $e_i \leq \ceil*{\frac{w-d}{n-1}}$. $\Ext^1 (\cal G, \cal L) = 0$ if each 
    $$
    \Ext^1 (\cal O(e_i), \cal L) = \Ext^1 (\cal O (e_i), \cal O(d)) \simeq H^1 (\PP^1, \cal O(d-e_i)) = 0,
    $$ 
    so if we can show that $d-e_i \geq 0$, then we are done. We know that
    $$
    d - e_i \geq d - \ceil*{\frac{w-d}{n-1}},
    $$
    so it suffices to show $d > \ceil*{\frac{w-d}{n-1}}$. We will show that $w/n > (w-d)/(n-1)$, and hence
    $$
    \ceil*{\frac{w}{n}} \geq \ceil*{\frac{w-d}{n-1}}.
    $$
    Since $d > \ceil{w/n} \geq w/n$, we must have that $-d < -w/n$, and so 
    $$
    \frac{w(n-1)}{n} - w =\frac{wn - w}{n} - w > -d.
    $$
    Rearranging, we then get that
    $$
    \frac{w}{n} > \frac{w-d}{n-1}.
    $$
    But by case \ref{large sub} assumptions, we must have that $d > \ceil{w/n}$, and thus $d > \ceil{(w-d)/(n-1)}$.
\end{proof}

Now we want to understand subbundles $\cal S \subset \cal W$. We will proceed along techniques similar to Section 9 of \cite{belgibmuk2015vanishing}. Here, we are looking for subbundles 
$$
\cal S \subset \cal O(d) \oplus \cal G_{-d,n-1}.
$$
Here there are two cases to consider:
\begin{enumerate}[label=(\alph*)]
    \item When $\calO(d) \subset \cal S \subset \calO(d) \oplus \cal G_{-d,n-1}$ and

    \item $\calO(d)$ is not in $\cal S$.
\end{enumerate}

The strategy is as follows: If $\cal O(d) \subset \cal S$, then since the inclusion $\calO(d) \hookrightarrow \calO(d) \oplus \cal G_{-d,n-1}$ is canonical (up to scalars), $\cal S$ is then fully determined by $\cal S / \calO(d) \subset \calO(d) \oplus \cal G_{-d,n-1} / \calO(d) \simeq \cal G_{-d,n-1}$. If $\cal S$ does not contain a $\cal O(d)$, then we will look at $\Ext^1 (\cal S, \calO(d) \oplus \cal G_{-d,n-1} / \cal S)$ which is the obstruction space controlling the deformations of $\cal S \subset \cal W$; we will show that in this case we have no obstructions and thus we can view this case as a deformation of a subbundle inside a generic bundle.

\begin{lem}
\label{lem: Ext(S, W/S) = 0}
    If $\cal O(d)$ is not in $\cal S \subset \cal W \simeq \calO(d) \oplus \cal G_{-d,n-1}$, then $\Ext^1 (\cal S, \cal W/\cal S) = 0$.
\end{lem}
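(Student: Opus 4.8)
The plan is to reduce the vanishing of $\Ext^1(\cal S, \cal W/\cal S)$ to a slope comparison between $\cal S$ and the quotient $\cal Q := \cal W/\cal S$, exploiting that every bundle on $\PP^1$ splits as a sum of line bundles. Since $\cal S \subset \cal W$ is a subbundle, $\cal Q$ is locally free and $0 \to \cal S \to \cal W \to \cal Q \to 0$ is locally split, so $\Ext^1(\cal S, \cal Q) \simeq H^1(\PP^1, \cal S^\vee \otimes \cal Q) \simeq \bigoplus_{a,b} H^1(\PP^1, \calO(q_b - s_a))$, where $\cal S = \bigoplus_a \calO(s_a)$ and $\cal Q = \bigoplus_b \calO(q_b)$. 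This group vanishes as soon as $q_b - s_a \geq -1$ for all $a,b$, i.e. as soon as $\mu_{\min}(\cal Q) \geq \mu_{\max}(\cal S) - 1$, so the entire proof reduces to establishing that inequality.

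First I would bound $\mu_{\max}(\cal S)$ from above. The hypothesis that $\calO(d)$ is not contained in $\cal S$ forces $\cal S \cap \calO(d) = 0$ as subsheaves of $\cal W$: a nonzero intersection would be a rank-one subsheaf of $\calO(d)$ whose saturation inside $\cal W$ is all of $\calO(d)$ (since $\cal W/\calO(d) \simeq \cal G_{-d,n-1}$ is torsion-free), and as the saturation of a subsheaf of the subbundle $\cal S$ lies in $\cal S$, this would give $\calO(d) \subset \cal S$, a contradiction. Hence the composite $\cal S \hookrightarrow \cal W \to \cal G_{-d,n-1}$ with the projection is an injection of sheaves; restricting it to the top summand $\calO(s_1)$ of $\cal S$ gives a nonzero map $\calO(s_1) \to \bigoplus_{i=1}^{n-1}\calO(e_i)$, so $s_1 \leq \max_i e_i =: e_+$, i.e. $\mu_{\max}(\cal S) \leq e_+$.

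Next I would bound $\mu_{\min}(\cal Q)$ from below. Dualizing the locally split sequence exhibits $\cal Q^\vee$ as a subbundle of $\cal W^\vee$, and restricting to each summand $\calO(-q_b)$ of $\cal Q^\vee$ shows $q_b \geq \mu_{\min}(\cal W)$. The computation at the end of Lemma \ref{lem: case b deformation} puts us in the regime $d > \ceil{(w-d)/(n-1)} \geq e_+$, so $\mu_{\min}(\cal W) = \min\!\big(d,\ \min_i e_i\big) = \min_i e_i =: e_-$, giving $\mu_{\min}(\cal Q) \geq e_-$. Since $\cal G_{-d,n-1}$ is evenly split, $e_+ - e_- \leq 1$; therefore for all summands $\calO(s_a) \subseteq \cal S$ and $\calO(q_b) \subseteq \cal Q$ we get $q_b - s_a \geq e_- - e_+ \geq -1$, so $H^1(\PP^1, \calO(q_b - s_a)) = 0$ and $\Ext^1(\cal S, \cal Q) = 0$.

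The only delicate point is the first step, deducing $\cal S \cap \calO(d) = 0$ from the hypothesis, which rests on the saturation argument and on $\cal G_{-d,n-1}$ being locally free; everything after that is bookkeeping with degrees of line bundles on $\PP^1$. The conceptual content is that the gap $d > e_+$ supplied by Lemma \ref{lem: case b deformation}, together with the even-split bound $e_+ - e_- \leq 1$, is exactly what is needed: once $\calO(d)$ falls out of $\cal S$, both $\cal S$ and $\cal W/\cal S$ are trapped in the slope window $[e_-, e_+]$ of width at most one, which is precisely the $H^1$-vanishing threshold.
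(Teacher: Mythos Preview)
Your proof is correct, and it proceeds along a somewhat different route from the paper. The paper does not bound the summands of $\cal Q = \cal W/\cal S$ at all; instead it applies $\Hom(\cal S,-)$ to the exact sequence $0 \to \cal S \to \cal W \to \cal Q \to 0$ and, using that $\Ext^2$ vanishes on a curve, reduces to proving the stronger statement $\Ext^1(\cal S,\cal W)=0$. Since $\cal W \simeq \calO(d)\oplus \cal G_{-d,n-1}$ is known explicitly, this splits into showing $H^1(\calO(d-a_i))=0$ and $H^1(\calO(-a_i)\otimes \cal G_{-d,n-1})=0$ for each summand $\calO(a_i)$ of $\cal S$; the first follows from $a_i<d$ and the second from exactly the bound $a_i\le e_+$ that you also prove. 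Your argument instead attacks $\Ext^1(\cal S,\cal Q)$ directly by dualizing to get $\cal Q^\vee\hookrightarrow\cal W^\vee$ and hence $\mu_{\min}(\cal Q)\ge e_-$, then combines this with $\mu_{\max}(\cal S)\le e_+$ and the even-split bound $e_+-e_-\le 1$. The paper's route is marginally simpler in that it never needs to say anything about $\cal Q$, while your route has the virtue of making the slope-window picture completely explicit and of not needing the separate check that $a_i<d$. Both rest on the same saturation argument to pass from ``$\calO(d)\not\subset\cal S$'' to the injection $\cal S\hookrightarrow\cal G_{-d,n-1}$.
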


\begin{proof}
    Consider $0 \to \cal S \to \cal W \to \cal W/ \cal S \to 0$. Because we are over a curve, all $\Ext^2$'s vanish, and thus we have a surjection $\Ext^1 (\cal S , \cal W ) \to \Ext^1 (\cal S , \cal W / \cal S).$ Thus, it suffices to show that $\Ext^1 (\cal S, \cal W) = 0$. 

    Suppose $\cal S \simeq \bigoplus_{i=1}^r \cal O(a_i)$. We have that 
    $$
    \Ext^1 (\cal S, \cal W) = H^1(\PP^1, \cal S^\vee \otimes \cal W),
    $$
    so it suffices to show that 
    \begin{align*}
    h^1 (\PP^1, \cal O(-a_i) \otimes \cal W) &= h^1( \PP^1, \cal O(-a_i) \otimes \paren{\cal O(d) \oplus \cal G_{-d,n-1}}) \\
    &= h^1 (\PP^1, \cal O(d-a_i)) + h^1 (\PP^1, \cal O(-a_i) \otimes \cal G_{-d,n-1})
    =0.    
    \end{align*}
    Since we have that $\cal O(d) \not \subset \cal S$, this means in particular that each $a_i < d$, and hence $h^1 (\PP^1, \cal O(-a_i) \otimes \cal O(d)) = h^1( \PP^1, \cal O (d-a_i)) = 0$ because $d-a_i \geq 0$. Thus, we are left with looking at $h^1 (\PP^1, \cal O(-a_i) \otimes \cal G_{-d,n-1})$. 

    Suppose that all the factors of $\cal G_{-d,n-1}$ are either $\cal O(e)$ or $\cal O(e+1)$, then 
    $$
    \Ext^1 (\cal O(a_i), \cal G_{-d,n-1})
    $$
    is a direct sum of vector spaces of the form
    $$
    H^1 (\PP^1, \cal O(-a_i) \otimes \cal O(e)) = H^1 (\PP^1, \cal O(e-a_i))
    $$
    and 
    $$
    H^1 (\PP^1, \cal O(-a_i) \otimes \cal O(e+1)) = H^1 (\PP^1, \cal O(e+1-a_i)).
    $$
    Because we asked for $\cal O(d) \not \subset \cal S$ and $\cal S$ is a subbundle, it must be then that the map $\cal S \subset \cal W$ gives a nonzero map $\cal O(a_i) \to \cal G_{-d,n-1}$, which then implies that $a_i \leq e+1$. This implies that $e-a_i \geq -1$ and $e+1 - a_i \geq 0$, which then implies that
    $$
    H^1 (\PP^1, \cal O(e-a_i)) = H^1 (\PP^1, \cal O(e+1-a_i)) = 0,
    $$
    and hence $\Ext^1 (\cal O(a_i), \cal G_{-d,n-1}) = 0$. This gives us $\Ext^1 (\cal S, \cal W) = 0$.
\end{proof}

This result shows that shows that we have no obstructions to deforming both $\cal W$ and $\cal S$. This means that by standard deformation of quot schemes (see e.g. Chapter 7 of \cite{hartshorne2010deformation} or Chapter I of \cite{kollar2013rational}), there exists a scheme $B$, a $\PP^1$-family $T \to B$, and bundles $\scr W$ and $\scr S \subset \scr W$ over $T$ such that over a point $b \in B$ we have that $\scr S |_b = \cal S \subset \scr W |_b = \cal W.$ Generically, $\scr W$ will be an evenly split bundle, and our goal will then be to count subbundles along the generic fiber instead.

\begin{lem}[Case \ref{large sub} type \ref{sub of V} inequalities]
    For each $p \in S$, let $I^p = \set{i^p_1 < i^p_2 < \ldots < i^p_r} \subset [n]$. If $\braket{\sigma_{I^{p_1}}, \ldots, \sigma_{I^{p_s}}}_\delta \neq 0$, then the inequality
    $$
    \frac{- \delta + r(2-s) + \sum_{p \in S} \sum_{i \in I^p} \beta_{n-i+1} (p)}{r} \leq \frac{\pardeg \cal E}{n+1}
    $$
    is necessary for the semistability of $(\cal E,\theta)$.
\end{lem}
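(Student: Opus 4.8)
The plan is to use the non-vanishing of $\braket{\sigma_{I^{p_1}}, \ldots, \sigma_{I^{p_s}}}_\delta$ to manufacture an honest rank-$r$ subbundle $\cal S \subset \cal V$, of degree $-\delta' + r(2-s)$ with $\delta' \le \delta$, whose induced parabolic weight in the $a$-th slot at each $p$ is at least $\beta_{n-i^p_a+1}(p)$, and then to feed $\cal S$ into the semistability inequality. Since in a type $(1,n)$ system of Hodge bundles $\theta$ kills $\cal V$, every subbundle of $\cal V$ is automatically $\theta$-stable, so semistability of $(\cal E, \theta)$ forces $\parmu \cal S \le \parmu \cal E = \pardeg \cal E/(n+1)$ (recall $\rank \cal E = n+1$). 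Once $\cal S$ is in hand, the estimate $\pardeg \cal S = -\delta' + r(2-s) + \sum_{p}\sum_{a}\beta_{n-k^p_a+1}(p) \ge -\delta + r(2-s) + \sum_{p}\sum_{i\in I^p}\beta_{n-i+1}(p)$, divided by $r$, combines with $\parmu \cal S \le \pardeg\cal E/(n+1)$ to give exactly the asserted inequality.

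To build $\cal S$, I would run the deformation set up after Lemma~\ref{lem: Ext(S, W/S) = 0}: there is a base $B$ with a distinguished point $b_0$ (shrinking $B$ to an irreducible curve through $b_0$ if necessary), and a $\PP^1$-family carrying a bundle $\scr W$ with $\scr W|_{b_0} \simeq \cal W \simeq \cal O(d) \oplus \cal G_{-d,n-1}$ whose generic fibre is evenly split, hence (being of rank $n$ and degree $0$) isomorphic to $\cal O^{\oplus n}$; I also spread the flags of $\cal W$ out over $B$ so that over $b_0$ they are the given generic flags and over a dense open the fibrewise flags are general. Form the relative Quot scheme of rank-$r$, degree-$(-\delta)$ subsheaves of $\scr W$, cut out by the closed Schubert conditions $\dim(\cal S_p \cap F_{i^p_a}(p)) \ge a$ at the $s$ marked points; this is proper over $B$. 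By the definition of $\braket{\sigma_{I^{p_1}},\ldots,\sigma_{I^{p_s}}}_\delta$ and its non-vanishing, this scheme is non-empty over the generic fibre, where $\scr W \simeq \cal O^{\oplus n}$ with general flags; its image in $B$ is closed and dense, hence equals $B$, so it is non-empty over $b_0$. This yields a subsheaf $\cal S_0 \hookrightarrow \cal W$ of rank $r$, degree $-\delta$, with $\dim(\cal S_{0,p}\cap F_{i^p_a}(p)) \ge a$ for all $a$ and $p$.

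Next I pass to the saturation $\hat{\cal S}_0 \supseteq \cal S_0$: over the curve $\PP^1$ this is a genuine subbundle, its degree can only go up, to some $-\delta'$ with $\delta' \le \delta$, and the flag incidences only get stronger. Set $\cal S = \hat{\cal S}_0 \otimes \cal O(2-s) \subset \cal V$, the untwisting that turns a subbundle of $\cal W$ into one of $\cal V$. To read off the induced parabolic structure, note that $\dim(\hat{\cal S}_{0,p} \cap F_{i^p_a}(p)) \ge a$ says the $a$-th jump of the flag induced on $\hat{\cal S}_{0,p}$ occurs at some position $k^p_a \le i^p_a$; since $F_k(p)$ carries weight $\beta_{n-k+1}(p)$ and these decrease as $k$ grows (the $\beta$'s being increasing in their index), the $a$-th induced weight is $\beta_{n-k^p_a+1}(p) \ge \beta_{n-i^p_a+1}(p)$. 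Summing over $a$ and $p$ and using $\delta' \le \delta$ gives the degree–weight estimate of the first paragraph, and we are done.

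The step I expect to be the main obstacle is the specialization in the second paragraph: one must be certain that a subbundle counted by the Gromov-Witten number in the evenly split fibre degenerates, as we move to $b_0$, into something inside $\cal W$ with the same rank and degree and with incidence at least as strong. This is exactly what properness of the Schubert-constrained relative Quot scheme over an irreducible base connecting $\cal W$ to $\cal O^{\oplus n}$ buys us, together with the standard point that the limit is only a subsheaf — its saturation can only raise the degree and tighten the flag incidences, both of which move the final inequality in the favourable direction. One should also observe that deforming the bundle $\cal W$ off the locus of bundles containing $\cal L$ as a subbundle is harmless here, since type~\ref{sub of V} subbundles are $\theta$-stable independently of $\cal L$.
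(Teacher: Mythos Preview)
Your argument is correct and reaches the same conclusion as the paper, but by a somewhat different route. The paper invokes Lemma~\ref{lem: Ext(S, W/S) = 0} to obtain smoothness of the relative Quot scheme along subbundles $\cal S \subset \cal W$ not containing $\cal O(d)$, and then uses Kleiman transversality to argue that the Schubert-constrained count over the special fibre $\cal W \simeq \cal O(d) \oplus \cal G_{-d,n-1}$ agrees with the Gromov--Witten count over the generic (evenly split) fibre. Your proof bypasses both ingredients: you only use properness of the Schubert-constrained relative Quot scheme over an irreducible base to specialize a generic-fibre subbundle to a subsheaf of $\cal W$, and then you saturate, accepting that the degree may rise ($\delta' \leq \delta$) and the incidence may tighten ($k^p_a \leq i^p_a$). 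Since both effects only push $\parmu \cal S$ upward, the desired inequality still follows.

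What each approach buys: your argument is more elementary and self-contained for the \emph{necessity} direction proved in this lemma --- it does not need the case split ``$\cal O(d) \subset \cal S$ or not'' nor the $\Ext^1$ computation. The paper's transversality argument, by contrast, preserves the actual \emph{count}, which is what is ultimately needed (via the Harder--Narasimhan uniqueness of Section~\ref{HN filt}) to show that the listed inequalities are also \emph{sufficient} in Theorem~\ref{thm:large sub existence}. Your closing remark that deforming $\cal W$ off the locus containing $\cal L$ is harmless for type~\ref{sub of V} subbundles is exactly right and worth making explicit.
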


\begin{proof}
    If we have a subbundle $\cal S \subset \cal W$ where $\cal O(d) \not\subset \cal S$, then by Lemma \ref{lem: Ext(S, W/S) = 0} we have that $\Ext^1 ( \cal S , \cal W / \cal S ) = 0$, so $\cal S \subset \cal W$ deforms in a family $\scr S \subset \scr W$ over base $B$ as above. Let $\cal Q \to B$ denote the quot scheme of subsheaves of $\scr W$ and let $\cal Q_b$ be the fiber of $\cal Q$ over $b$. Let $U \subset \cal Q$ be the locus of subbundles inside $\scr W$, and let $U_b$ be the fiber over $b$. We have a smooth map $U_b \to \prod_{p \in S} Gr(r, \cal W_p)$ by sending the subbundle to its fiber over each $p \in S$. Now if $\cal S_p$ is in some $\Omega_{I^p} (F_\bullet (p))$, then we can pull back to get an intersection of cycles on $U$, and by Kleiman transversality, we can assume that these cycles intersect transversely. Transversality is an open condition, and so we can look at the corresponding intersection over a generic fiber, which can be counted using Gromov-Witten invariants.

    Let $\xi \in B$ be a general point of $B$ such that $\scr W |_\xi \simeq \cal O_{\PP^1}^{\oplus n}$. We want to know if there exists a subbundle $\cal S \subset \cal W$ such that $\cal S_p \in \Omega_{I^p} (F_\bullet (p))$ for general flags $F_\bullet (p_1), \ldots, F_\bullet (p_s)$. By the arguments of the previous paragraph, we know such $\cal S$ exists if there is some subbundle of the same rank and degree of $\scr W|_\xi$ that lives in Schubert varieties of the given classes $\sigma_{I^p}$ for $p \in S$. These are exactly given by the numbers $\braket{\sigma_{I^{p_1}}, \ldots, \sigma_{I^{p_s}}}_\delta$, which if nonzero means there is a subbundle of the generic bundle of rank $r$ and degree $- \delta$ that hits generic flags at the $i$-th component for $i \in I^p$ for each $p \in S$, which then tells us such a rank $r$ degree $- \delta$ $\cal S \subset \cal W$ exists. 
    
    Since $\cal W = \cal V \otimes \Omega^1 (\log S)$, we need to look at $\cal S(2-s) \subset \cal V$, which has degree
    $$
    \deg \cal S(2-s) = -\delta + r (2-s). 
    $$
    At each $p \in S$, $\cal S$ gets weights $\beta_{n-i+1} (p)$
    for each $i \in I^p$, hence we get
    $$
    \pardeg \cal S(2-s) = - \delta + r(2-s) + \sum_{p \in S} \sum_{i \in I^p} \beta_{n-i+1} (p),
    $$
    from which the semistability inequality follows.
\end{proof}
\begin{rmk}
In the above argument, if we did not assume that $w=\deg \cal W = 0$, then instead of the usual Gromov-Witten number $\braket{\sigma_{I^{p_1}}, \ldots, \sigma_{I^{p_s}}}_\delta$, we would need to consider instead the generalized Gromov-Witten number $\braket{\sigma_{I^{p_1}}, \ldots, \sigma_{I^{p_s}}}_{\delta, -w}$. 
\end{rmk}

Next, we need to handle the subbundles where $\cal O(d) \subset \cal S$. In this case, we would need to look at subbundles $\cal S / \cal O(d) \subset \cal W / \cal O(d) \simeq \cal G_{-d, n-1}$. Here, $\cal G_{-d,n-1}$ comes equipped with the induced flags $\bar F_1(p) \subset \bar F_2(p) \subset \ldots \bar F_{n-1} (p)$ because $\cal O(d)_p = \theta(\cal L)_p$ lives (strictly) in $F_n (p)$. In this case counting subbundles of $\cal W$ that contain $\cal O(d)$ is also the same as counting bundles of type \ref{containing L}.
\begin{lem}[Case \ref{large sub} type \ref{containing L} inequalities]
    For each $p \in S$, let $J^p = \set{j^p_1 < \ldots < j^p_{r - 1}} \subset [n-1]$. If the generalized Gromov-Witten number
    $$
    \braket{\sigma_{J^{p_1}}, \ldots,  \sigma_{J^{p_s}}}_{\delta, d} \neq 0,
    $$
    then the inequality
    $$
    \frac{2d - \delta + r(2-s) + \sum_{p \in S} \paren*{\alpha(p) + \beta_1(p) + \sum_{j \in J^p} \beta_{n-j+1}(p)}}{r+1} \leq \frac{\pardeg \cal E}{n+1}
    $$
    is necessary for the semistability of $(\cal E, \theta)$.
\end{lem}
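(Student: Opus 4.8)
The relevant quotient bundle here is already as generic as possible, which makes this the easiest of the four cases to handle. By Lemma \ref{lem: case b deformation} we may assume $\cal W \simeq \cal O(d) \oplus \cal G_{-d,n-1}$ with $\theta(\cal L) = \cal O(d)$ the first summand; since (by the degree estimates in the proof of that lemma) every factor of $\cal G_{-d,n-1}$ has degree strictly less than $d$, there are no nonzero maps $\cal O(d) \to \cal G_{-d,n-1}$, so this copy of $\cal O(d)$ inside $\cal W$ is unique, and a rank $r$ subbundle $\cal S \subset \cal W$ with $\cal O(d) \subset \cal S$ is the same datum as the rank $r-1$ subbundle $\bar{\cal S} := \cal S/\cal O(d) \subset \cal W/\cal O(d) \simeq \cal G_{-d,n-1}$, with $\deg \cal S = d + \deg \bar{\cal S}$. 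Thus, in contrast with Lemma \ref{lem: Ext(S, W/S) = 0} and the preceding lemma, no further deformation of $\cal W$ is needed: the subbundles of type \ref{containing L} are controlled by the evenly split bundle $\cal G_{-d,n-1}$ of degree $-d$, equipped with the flags $\bar F_\bullet(p)$ induced from the general flags $F_\bullet(p)$ of $\cal W$ under the quotient map.

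Because $\alpha(p) < \beta_1(p)$ we have $\theta(\cal L)_p \subset F_n(p) = \cal W_p$, and taking $\theta$ general we may assume $\theta(\cal L)_p$ is in general position with respect to $F_\bullet(p)$; then $\bar F_1(p) \subset \ldots \subset \bar F_{n-1}(p)$ is a general complete flag of the fibre $(\cal G_{-d,n-1})_p$ with $\bar F_k(p)$ carrying induced weight $\beta_{n-k+1}(p)$. By the definition of the generalized Gromov--Witten number for the evenly split bundle $\cal G_{-d,n-1}$ of degree $-d$, the hypothesis $\braket{\sigma_{J^{p_1}}, \ldots, \sigma_{J^{p_s}}}_{\delta, d} \neq 0$ produces a rank $r-1$, degree $-\delta$ subbundle $\bar{\cal S} \subset \cal G_{-d,n-1}$ with $\dim(\bar{\cal S}_p \cap \bar F_{j^p_a}(p)) \geq a$ for every $a$ and every $p$ (by Section \ref{HN filt}, nonvanishing is all that needs to be checked). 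Lifting it gives $\cal O(d) \subset \cal S \subset \cal W$ of rank $r$ and degree $d - \delta$; applying Lemma \ref{controlling subbundles by quotient} with $\dim(F_i(p) \cap \theta(\cal L)_p) = 0$ for $i < n$ and $= 1$ for $i = n$, the fibre $\cal S_p$ meets $F_{j^p_a}(p)$ in dimension $\geq a$ and meets $F_n(p)$ in dimension $r$, so the induced parabolic weights of $\cal S$ at $p$ are $\beta_1(p)$ together with $\beta_{n-j+1}(p)$ for $j \in J^p$.

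Now I untwist: $\cal S(2-s) = \cal S \otimes \Omega^1(\log S)^\vee$ is a rank $r$ subbundle of $\cal V$ of degree $d - \delta + r(2-s)$, with the same induced weights as $\cal S$, since twisting by a line bundle with trivial parabolic structure does not change them. As $\theta(\cal L) = \cal O(d) \subset \cal S = \cal S(2-s) \otimes \Omega^1(\log S)$ and $\theta$ annihilates $\cal V$, the subbundle $\cal L \oplus \cal S(2-s) \subset \cal E$ is a $\theta$-invariant subsystem of Hodge bundles of rank $r+1$, of degree $2d - \delta + r(2-s)$ and with parabolic weights $\alpha(p)$, $\beta_1(p)$ and $\beta_{n-j+1}(p)$ for $j \in J^p$ at each $p \in S$. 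Hence $\pardeg(\cal L \oplus \cal S(2-s)) = 2d - \delta + r(2-s) + \sum_{p \in S}(\alpha(p) + \beta_1(p) + \sum_{j \in J^p} \beta_{n-j+1}(p))$, which is at worst a lower bound since in non-generic position the induced weights can only be larger. Semistability of $(\cal E, \theta)$ then forces $\parmu(\cal L \oplus \cal S(2-s)) \leq \parmu \cal E = \pardeg \cal E/(n+1)$, which is precisely the asserted inequality.

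The only steps that require genuine care are the genericity of the induced flags $\bar F_\bullet(p)$ on $\cal G_{-d,n-1}$, which is why $\theta$ is taken general, and the degree-and-weight bookkeeping across the quotient by $\theta(\cal L)$, which is handled by Lemma \ref{controlling subbundles by quotient}. I do not expect a real geometric obstacle here, precisely because $\cal G_{-d,n-1}$ is already evenly split, so the Gromov--Witten machinery from the unitary and Case \ref{small sub} settings applies without modification.
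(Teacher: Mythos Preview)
Your proof is correct and follows essentially the same approach as the paper: use the nonvanishing generalized Gromov--Witten number to obtain $\bar{\cal S} \subset \cal G_{-d,n-1}$, lift to $\cal S \subset \cal W$ containing $\cal O(d)$, untwist to $\cal S(2-s) \subset \cal V$, and test semistability against $\cal L \oplus \cal S(2-s)$. You are somewhat more explicit than the paper in justifying the uniqueness of the copy of $\cal O(d)$ inside $\cal W$ and in invoking Lemma \ref{controlling subbundles by quotient} for the weight bookkeeping, whereas the paper simply asserts these; your remark that the computed parabolic degree is at worst a lower bound is also a nice addition the paper omits here.
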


\begin{proof}
    If the generalized Gromov-Witten invariant $\braket{ \sigma_{J^{p_1}}, \ldots, \sigma_{J^{p_s}}}_{\delta, d} \neq 0$, then there exists a subbundle 
    $$
    \bar{ \cal S }\subset \cal G_{-d,n-1}
    $$
    of degree $-\delta$ and rank $r - 1$ such that $\bar{\cal S}_p$ meets the induced flags $\bar F_\bullet (p)$ at each each $j \in J^p$ for all $p \in S$. Then looking at the quotient $\cal W \twoheadrightarrow \cal W / \cal O(d) \simeq \cal G_{-d, n-1}$, there is a unique (up-to-scalar) lift of $\bar{\cal S}$ to a subbundle $\cal S \hookrightarrow \cal W$ containing $\cal O(d)$ of rank $r$. $\cal S \simeq \cal O(d) \oplus \bar{\cal S}$, so it has degree $d - \delta$. This bundle meets $F_\bullet (p)$ at $F_j (p)$ for each $j \in J^p$ and at $F_n (p)$ for each $p \in S$, which means that $\cal S$ gets weights $\beta_{n-j+1}(p)$ and $\beta_1 (p)$.  
    
    Now, if $\cal S \subset \cal W$, then we shift to $\cal S(2-s)$ to get a subbundle of $\cal V$. Thus, for such $\cal S$ containing $\theta (\cal L)$, we get the inequality
    $$
    \frac{d - \delta + r(2-s) + \sum_{p \in S} \paren*{\beta_1(p) + \sum_{j \in J^p} \beta_{n-j+1} (p)}}{r} \leq \frac{\pardeg \cal E}{n+1}.
    $$
    Since this is an inequality for a subbundle of $\cal V$, it was already covered in the previous proposition. Now the subbundles of $\cal E$ of type \ref{containing L} are also exactly of the form $\cal L \oplus \cal S(2-s)$ for such $\cal S$, which gives us the inequality
    $$
    \frac{2d - \delta + r(2-s) + \sum_{p \in S} \paren*{\alpha(p) + \beta_1(p) + \sum_{j \in J^p} \beta_{n-j+1} (p)}}{r+1} \leq \frac{\pardeg \cal E}{n+1}.
    $$
\end{proof}
\begin{rmk}
As in the previous lemma, when $w=\deg \cal W \neq 0$, the generalized Gromov-Witten number we need to look at instead would be $\braket{\sigma_{J^{p_1}}, \ldots, \sigma_{J^{p_s}}}_{\delta, d-w}$.
\end{rmk}

Once again, combining the two lemmas above, we get the following criteria for semistability of $(\cal E, \theta)$.
\begin{thm}
\label{thm:large sub existence}
    A $(1,n)$ semistable parabolic system of Hodge bundles $(\cal E, \theta)$ with parabolic weights $\set{\alpha(p), \beta_i (p)}_{i \in [n], p \in S}$ as above and of Case \ref{large sub} exists if the following criteria are met.
    \begin{itemize}
        \item[\ref{sub of V}] For every subsets $I^{p_1}, \ldots, I^{p_s} \subset [n]$ of size $r$ such that $\braket{\sigma_{I^{p_1}}, \ldots, \sigma_{I^{p_s}}}_\delta \neq 0$, the inequality
        $$
        \frac{- \delta + r(2-s) + \sum_{p \in S} \sum_{i \in I^p} \beta_{n-i+1} (p)}{r} \leq \frac{\pardeg \cal E}{n+1}
        $$
        is satisfied.

        \item[\ref{containing L}] For every subsets $J^{p_1}, \ldots, J^{p_s} \subset [n-1]$ of size $r-1$ such that $\braket{\sigma_{J^{p_1}}, \ldots, \sigma_{J^{p_s}}}_{\delta, d} \neq 0$, the inequality
        $$
        \frac{2d - \delta + r(2-s) + \sum_{p \in S} \paren*{\alpha(p) + \beta_1(p) + \sum_{i \in J^p} \beta_{n-j+1}(p)}}{r+1} \leq \frac{\pardeg \cal E}{n+1}
        $$
        is satisfied.
    \end{itemize}
\end{thm}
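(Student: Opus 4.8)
The two preceding lemmas already show that each of the displayed inequalities is \emph{necessary} for semistability, so the plan is to prove the converse --- that these inequalities are also sufficient --- by showing that the \emph{generic} system of Hodge bundles with the given invariants is semistable. By Lemma \ref{lem: case b deformation} I may take $\cal W = \cal V\otimes\Omega^1(\log S)\simeq\cal O(d)\oplus\cal G_{-d,n-1}$ with $\theta(\cal L)=\cal O(d)$ the first summand, generic flags $F_\bullet(p)$ at each $p\in S$ (permissible because all weights are distinct and $\alpha(p)<\beta_1(p)$, so the residue condition on $\theta$ is automatic), and $\theta$ itself as generic as this allows. As in the proof of Lemma \ref{lem: case b deformation}, after a twist the data $\cal L\subset\cal W\subset\cal O^{\oplus N}$ is a point of an irreducible flag quot scheme (Kim \cite{kim1996gromov}), and semistability is an open condition on this irreducible parameter space; hence a semistable object with the prescribed invariants exists if and only if the generic $(\cal E,\theta)$ is semistable, and it suffices to rule out, under the standing hypothesis, a destabilizing $\theta$-stable Hodge subbundle of the generic $(\cal E,\theta)$.

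Suppose one exists. By the Harder--Narasimhan discussion of Section \ref{HN filt} there is then a \emph{unique} $\theta$-stable Hodge subbundle $\cal F$ of maximal parabolic slope, with $\parmu\cal F>\parmu\cal E=\pardeg\cal E/(n+1)$, and by that same uniqueness it is enough to treat the cases in which the enumerative count producing $\cal F$ equals $1$, in particular is nonzero. Split according to the two types of subbundle. If $\cal F\subset\cal V$ is of type \ref{sub of V}, twist to $\cal S=\cal F\otimes\Omega^1(\log S)\subset\cal W$, of rank $r$ and degree $-\delta$, with $\cal S_p$ in the Schubert cell $\Omega_{I^p}(F_\bullet(p))$ for subsets $I^p\subset[n]$ of size $r$; since the flags are complete, the induced weights of $\cal F$ are exactly the $\beta_{n-i+1}(p)$, $i\in I^p$, so $\parmu\cal F$ is precisely the left-hand side of the type \ref{sub of V} inequality. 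When $\cal O(d)\not\subset\cal S$, Lemma \ref{lem: Ext(S, W/S) = 0} gives $\Ext^1(\cal S,\cal W/\cal S)=0$, so $\cal S\subset\cal W$ spreads out in a flat family over an irreducible base whose general fibre is evenly split, i.e.\ isomorphic to $\cal O^{\oplus n}$; the quot-scheme/Kleiman-transversality machinery used to prove the Case \ref{large sub} type \ref{sub of V} lemma then shows that the mere existence of $\cal F$ forces the Gromov--Witten number $\braket{\sigma_{I^{p_1}},\ldots,\sigma_{I^{p_s}}}_\delta$ attached to the relevant positions to be nonzero, whence the hypothesis gives $\parmu\cal F\le\parmu\cal E$, a contradiction. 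When $\cal O(d)\subset\cal S$ --- which includes the case that $\cal F$ is itself of type \ref{containing L}, necessarily $\cal F=\cal L\oplus\cal S(2-s)$ --- the subbundle $\cal S$ is determined by $\bar{\cal S}=\cal S/\cal O(d)\subset\cal W/\cal O(d)\simeq\cal G_{-d,n-1}$ meeting the induced generic flags $\bar F_\bullet(p)$ at positions $J^p\subset[n-1]$ of size $r-1$ (Lemma \ref{controlling subbundles by quotient}), with $\cal F$ picking up in addition the weights $\alpha(p)$ and $\beta_1(p)$; here the existence of $\cal F$ forces the generalized Gromov--Witten number $\braket{\sigma_{J^{p_1}},\ldots,\sigma_{J^{p_s}}}_{\delta,d}$ to be nonzero, and the type \ref{containing L} hypothesis again yields $\parmu\cal F\le\parmu\cal E$, a contradiction. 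Hence the generic $(\cal E,\theta)$ is semistable.

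The step I expect to be the main obstacle is exactly the implication ``$\cal F$ exists $\Rightarrow$ the appropriate (generalized) Gromov--Witten number is nonzero,'' which is delicate precisely because $\cal W$ is forced to contain the high-degree line bundle $\cal O(d)$ and so is genuinely non-generic. Two points require care. First, for a type \ref{sub of V} destabilizer with $\cal O(d)\not\subset\cal S$, one must control how the Schubert positions of $\cal S$ --- hence the induced weights $\beta_{n-i+1}(p)$ and the degree --- behave as $\cal S\subset\cal W$ degenerates to a subbundle of $\cal O^{\oplus n}$, and argue that the resulting incidence problem on the general fibre is governed by a genuinely $0$-dimensional (hence Gromov--Witten--counted) intersection whose associated inequality is no weaker than the one bounding $\parmu\cal F$; the Harder--Narasimhan uniqueness of Section \ref{HN filt} is what permits the reduction to counts equal to $1$ and so sidesteps excess-intersection issues. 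Second, one must separately handle the type \ref{sub of V} subbundles that do contain $\cal O(d)$: these carry the numerical data of a type \ref{containing L} subbundle $\cal L\oplus\cal S(2-s)$, and --- as observed in the Case \ref{large sub} type \ref{containing L} lemma --- the slope bound they impose is already subsumed by the type \ref{containing L} inequality, so no new conditions appear. The rest is parabolic-degree bookkeeping --- matching $\pardeg$ of each candidate $\cal F$ against the displayed numerators, including the twist by $\cal O(2-s)$ relating subbundles of $\cal W$ to subbundles of $\cal V$ --- which is routine and is precisely what the two necessity lemmas carry out.
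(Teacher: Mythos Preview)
Your proposal is correct and takes the same approach as the paper --- indeed the paper's own proof is the single line ``combining the two lemmas above,'' so you have written out the sufficiency argument (irreducible parameter space, openness of semistability, Harder--Narasimhan uniqueness from Section~\ref{HN filt}, and the $\Ext^1=0$ deformation of Lemma~\ref{lem: Ext(S, W/S) = 0}) that the paper leaves implicit. The one place you and the paper phrase things slightly differently is the handling of a type~\ref{sub of V} destabilizer $\cal S(2-s)$ with $\cal O(d)\subset\cal S$: you say its slope bound is subsumed by the type~\ref{containing L} inequality, whereas the paper (inside the proof of the Case~\ref{large sub} type~\ref{containing L} lemma) says it ``was already covered in the previous proposition,'' i.e.\ is a special case of the type~\ref{sub of V} list --- neither assertion is made fully rigorous, and this is precisely the delicate spot you yourself flag in your final paragraph.
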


Combining Theorems \ref{thm:small sub existence} and \ref{thm:large sub existence}, we get Theorem \ref{thm:(1,n)} immediately.

\begin{ex}
    We now do a basic example using $S = \set{0,1,\infty}$.
    
    Suppose we are given the data of weights
    \begin{align*}
        & \alpha(0) < \beta_1 (0) < \beta_2 (0) < \beta_3 (0) < \beta_4 (0), \\
        & \alpha(1) < \beta_1 (1) < \beta_2 (1) < \beta_3 (1) < \beta_4 (1), \\
        & \alpha(\infty) < \beta_1 (\infty) < \beta_2 (\infty) < \beta_3 (\infty) < \beta_4 (\infty),
    \end{align*}
    where $\cal E = \cal L \oplus \cal V$ gets the weights in the same notation as above. Consider $\deg \cal L = 1$ and $\deg \cal V = 0$. Because $\alpha(p) < \beta_1 (p) < \beta_\infty (p)$ for all $p$, we can assume that $\theta$ takes $\cal L$ to a subbundle of $\cal W = \cal V (1)$. In this case, we can assume that $\cal V$ is evenly split. We assume also that $\pardeg \cal E = 0$, in which case we need that the sum of weights
    $$
    \sum_{p \in \set{0,1, \infty}} \alpha(p) + \beta_1(p) + \beta_2 (p) + \beta_3 (p) = 0.
    $$
    
    In this scenario, we have that $\theta$ takes $\cal O(1)$ to $\cal O(1)^{\oplus 4}$. Here, taking $\cal W / \cal L$ we get $\calO(1)^{\oplus 3}$. Since both bundles are twists of the trivial bundle, subbundles of $\cal W / \cal L$ are then completely determined by quantum cohomology of $Gr(r, 3)$ and subbundles of $\cal V$ by $Gr(r,4)$. 

    For subbundles of $\cal W / \cal L$, we can consider subsets $J^0, J^1, J^\infty$ of $[4-1]$ of size $r-1$. Here we can simply look at 
    $$
    \braket{\sigma_{J^0}, \sigma_{J^1}, \sigma_{J^\infty}}_\delta,
    $$
    which would count rank $r-1$ and degree $1-\delta$ subbundles of $\calO(1)^{\oplus 3}$ in this instance. For each such nonzero $\braket{\sigma_{J^0}, \sigma_{J^1}, \sigma_{J^\infty}}_\delta$, we would get the semistability inequality 
    $$
    1 -(\delta + r) + \sum_{p \in \set{0,1, \infty}} \paren*{\alpha(p) + \beta_1 (p) + \sum_{i \in J^p} \beta_{4-i} (p)} \leq 0.
    $$

    For subbundles of $\cal V$, we would need to consider subsets $I^0, I^1, I^\infty$ of $[4]$ and from here look at $\braket{\sigma_{I^0}, \sigma_{I^1}, \sigma_{I^\infty}}_\delta \neq 0$. Again, this would count all degree $-\delta$ subbundles of $\cal V = \calO^{\oplus 4}$. This leads to the semistability inequality
    $$
    \delta + \sum_{p \in \set{0,1,\infty}} \sum_{i \in I^p} \beta_{4-i+1} (p) \leq 0. 
    $$
    Hence now we need to consider when curve counts in $Gr(r,3)$ and $Gr(r,4)$ are nonzero. 
    
    For subbundles of $\cal W / \cal L$ both $Gr(2,3)$ and $Gr(1,3)$ are the same as quantum cohomology of $\PP^2$, so for an easy illustration, we can simply look at the $r = 1$ case, i.e. we are looking for rank two subbundles of $\cal W$ that contains $\cal L$. In this case, the subsets are $\set{1}, \set{2}$ and $\set{3}$. To do quantum cohomology computations, we convert these subsets to Young diagram data, which for $\PP^2$ just means a single integer $0 \leq \lambda \leq 2$. The conversion is given as follows:
    \begin{align*}
        \set{1} & \longleftrightarrow 2, \\
        \set{2} & \longleftrightarrow 1, \\
        \set{3} & \longleftrightarrow 0.
    \end{align*}
    We abuse notation and also denote $\sigma_\lambda$ for the Schubert class associated with the Young diagram $\lambda$. Using this notation, we can then list out the quantum multiplication table for $Gr(1,3)$:
    \begin{center}
    \begin{tabular}{c|c|c|c}
         & $\sigma_0$ & $\sigma_1$ & $\sigma_2$ \\
        \hline
        $\sigma_0$ & $\sigma_0$ & $\sigma_1$ & $\sigma_2$ \\
        $\sigma_1$ & $\sigma_1$ & $\sigma_2$ & $\sigma_0 q$ \\
        $\sigma_2$ & $\sigma_2$ & $\sigma_0 q$ & $\sigma_1 q$
    \end{tabular}
    \end{center}
    Reading off some select values, we see that $\braket{\sigma_1, \sigma_1, \sigma_2}_1 \neq 0$ and $\braket{\sigma_2, \sigma_2, \sigma_1}_1 \neq 0.$ Converting to subsets, we get $\braket{\sigma_{\set{2}}, \sigma_{\set{2}}, \sigma_{\set{1}}}_1 \neq 0$ and $\braket{\sigma_{\set{1}}, \sigma_{\set{1}}, \sigma_{\set{2}}}_1 \neq 0$. These would correspond to 
    $$
    -1 + \alpha(0) + \beta_1 (0) + \beta_2 (0) + \alpha(1) + \beta_1(1) + \beta_2(1) + \alpha(\infty) + \beta_1 (\infty) + \beta_3 (\infty) \leq 0
    $$
    and
    $$
    -1 + \alpha(0) + \beta_1(0) + \beta_3 (0) + \alpha(1) + \beta_1 (1) + \beta_3 (1) + \alpha(\infty) + \beta_1 (\infty) + \beta_2 (\infty) \leq 0
    $$
    respectively.

    For an inequality involving subbundles of $\cal V$, we would need to look at $Gr(r , 4)$. In this case, the only Grassmannian with interesting quantum cohomology is when $r = 2$, i.e. $Gr(2 , 4)$. One nonzero Gromov-Witten invariant we can get in this case would be $\braket{\sigma_{(2,2)}, \sigma_{(2,2)}, \sigma_{(2,2)}}_2 = 1$, which after translating to the language of subsets becomes 
    $$
    \braket{ \sigma_{\set{1,2}}, \sigma_{\set{1,2}}, \sigma_{\set{1,2}} }_2 = 1.
    $$
    Translating this into a subbundle of $\cal V$, we get a degree $-2$ subbundle of $\cal V \simeq \calO^{\oplus 4}$ that passes through the first and second parts of the flags of $\cal V$ at each $0, 1, \infty$. This corresponds to the semistability inequality
    $$
    -2 + \sum_{p \in \set{0,1,\infty}} \beta_4(p) + \beta_3 (p) \leq 0.
    $$
\end{ex}

\section{The \texorpdfstring{$(1,2)$}{(1,2)} case using parabolic shifting}
\label{section: (1,2)}
We can make less assumptions about the weights of our bundle in the $(1,2)$ system of Hodge bundles case. Instead of asking for all the weights to be distinct, we will make no assumptions on the weight data, and instead ask just for the local monodromies of the local systems to be semisimple.  

In this setting, our system of Hodge bundles decomposes again as $\cal E = \cal L \oplus \cal V,$ with$\theta: \cal L \to \cal V \otimes \Omega^1 (\log S)$. Now, we have $\rank \cal L = 1$ and $\rank \cal V = 2$. We also fix $\deg \cal L$ and $\deg \cal V$ to be some chosen integers. Let $\alpha(p)$ for $p \in S$ be the single weight for the line bundle $\cal L$ and $\beta_1(p) \leq \beta_2(p)$ be the weights for the rank 2 bundle $\cal V$. Suppose there are $m$ points $p \in S$ such that
$$
\beta_1(p) \leq \alpha(p) < \beta_2 (p).
$$
This means then that we have $m$ points where the flag of $\cal V$ at that point comes from $\cal L$. That is, if $F_p \subset \cal V_p \otimes \Omega^1 (\log S)_p$ is the rank 1 subspace coming from the parabolic structure on $\cal V$, then we necessarily have that $F_p = \theta(\cal L_p) \simeq \cal L_p.$

To simplify the situation further, suppose that $p_1, \ldots, p_l$ are the points in $S$ such that $\alpha(p) < \beta_2 (p).$ That is, these are exactly the points where $\theta$ has a pole. So then $p_{l+1}, \ldots, p_s$ are the points where $\theta$ has no pole. This means that we can simplify our situation to be such that
$$
\theta : \cal L \to \cal V \otimes \Omega^1 (p_1 + \ldots + p_l) \subset \cal V \otimes \Omega^1 (\log S).
$$ 
Simplifying this numerically, we have $\theta : \cal L \to \cal V(l-2) = \cal V \otimes \cal O(l-2)$. Now, since stability is an open condition, and the degrees of $\cal L$ and $\cal V$ are fixed, we can suppose then that $\theta$ is generic enough to make $\cal L$ into a subbundle of $\cal V (l-2)$. For simplicity of notation, set $\cal W = \cal V(l-2).$

We will always deform to the most generic bundle $\cal E = \cal L \oplus \cal V$ possible. There are two separate cases of vector bundles we can deform to. Suppose $\deg \cal W = \deg \cal V(l-2) = w$, then we have either that $\deg \cal L > \ceil{w/2}$ or $\deg \cal L \leq \ceil{w/2}$. If $d = \deg \cal L > \ceil{w/2}$, then we necessarily have that the most generic version of $\cal W$ we can deform to is $\cal W \simeq \cal O(d) \oplus \calO(w-d)$ (see Lemma \ref{lem: case b deformation}). And in the case that $\deg \cal L \leq \ceil{w/2}$, then $\cal L \subset \cal W$ can be allowed to be deformed so that $\cal W$ is an evenly split vector bundle of degree $w$. These two cases are essentially numerically the same in this situation as we will see by our arguments in the rest of this section, so we restrict ourselves to the case where $\cal W$ is generic. Further, any differences differ only by numerical conditions, and does not fundamentally change the problem, so for ease of notation, we restrict ourselves to $\cal W \simeq \cal O \oplus \cal O$. In summary, we simplify to the case of analyzing the situation of $\cal L \subset \calO^{\oplus 2}$, with $\cal L \simeq \cal O(d)$, $d \leq 0$.

Once again, there are two types of subbundles to consider, each giving a type of inequality.
\begin{itemize}
    \item[\ref{sub of V}] Subbundles of $\cal W$.
    
    \item[\ref{containing L}] $\cal L \oplus \theta(\cal L) \simeq \cal L \oplus \cal L$.
\end{itemize}
The main challenge here is to study subbundles of $\cal W$, since we already know how $\cal L \oplus \theta (\cal L)$ behaves using the fact that the residues of $\theta$ must preserve the filtration. 

The strategy to handle the $(1,2)$ case is to do the parabolic shifting operation at every point $p \in S$. Denote by $\tilde{\cal V}$ and $\tilde{\cal W}$ the bundles after applying the parabolic shifting operation at every point of $S$. By doing this, we essentially get rid of the parabolic structure in the following sense: Asking for numerical conditions means asking which stability inequalities we need to write. Such inequalities are given by subbundles of $\cal S \subset \cal V$. So if we shift along every point, we know then that the inequality coming from $\cal S$ is valid if and only if $\tilde{\cal S}$, the result of $\cal S$ after applying the shift operation, is still a valid subbundle $\tilde{\cal V}$. We can control this by directly reading the numerical data of $\pardeg \cal S$, i.e. $\deg \cal S$ and when $\cal S_p$ hit the special line subspace of $\cal W_p$ for each $p \in S$, which are controlled by which weights $\cal S$ has at $p$.

There is one small subtlety with regards to the parabolic shifting here: if we have that $\beta_1 (p) \leq \alpha (p) < \beta_2(p)$ (i.e. $\res_p \theta$ takes $\cal L_p$ to the special subspace of $\cal W_p$), then we will actually apply the shift operation on $\cal E$ twice so that we shift both $\cal L$ and $\cal V$ in this instance; otherwise, we only shift once. This is mainly a bookkeeping device to preserve the number of poles of the Higgs field and will not affect anything else because the second shift will only affect $\cal L$, leaving $\tilde{\cal V}$ and hence $\tilde{\cal W}$ intact.

For this $(1,2)$ case, we have the following theorem. Recall that fixing $\deg \cal L$ and $\deg \cal V$ and the weight data is enough to fix $\pardeg \cal E$. 
\begin{thm}[Existence of semistable type $(1,2)$ system of Hodge bundles]
\label{thm:(1,2)} 
Assume $\cal V$ is generic in the locus where $\cal V \otimes \Omega^1 (p_1 + \ldots + p_l)$ contains $\cal L$ as a subbundle. Suppose also that $\deg \cal L$ and $\deg \cal V$ are some fixed numbers.

A stable system of Hodge bundles of type $(1,2)$ where $\cal L$ takes weights $\alpha(p)$ and $\cal V$ takes parabolic weights $\beta_1 (p) \leq \beta_2(p)$ and $\cal E$ has the degree data as specified above exists if and only if the following conditions hold:

\begin{enumerate}
    \item[\ref{sub of V}] Let $I = (i(p))_{p \in S}$ where each $i(p) \in \set{1,2}$. Set $N = \abs{\set{p \in S \mid i(p) = 2}}$. Then for all $I$ and $\delta$ such that $\cal O(\delta + N)$ is a valid subbundle of $\tilde{\cal W}$, we have
    $$
    \delta - (l-2) + \sum_{p \in S} \beta_{i(p)} (p) \leq \frac{\pardeg \cal E}{3}.
    $$

    \item[\ref{containing L}] The inequality
    $$
    \frac{2 \deg \cal L - (l-2) + \sum_{p \in S} \alpha(p) + \sum_{i=1}^m \beta_1 (p_i) + \sum_{i=m+1}^s \beta_2 (p_i)}{2} \leq \frac{\pardeg \cal E}{3}
    $$
    must hold.
\end{enumerate}
\end{thm}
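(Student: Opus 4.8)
The plan is to treat the two families of inequalities in Theorem~\ref{thm:(1,2)} separately: the single type~\ref{containing L} inequality by a direct computation with the residue, and the type~\ref{sub of V} family by pushing the enumerative problem through the parabolic shifting operation of Section~\ref{section:shifting}. Throughout I would fix the generic $(\cal E,\theta)=(\cal L\oplus\cal V,\theta)$ in the relevant deformation class, so that (as arranged in the setup) $\cal W=\cal V(l-2)$ is evenly split, $\cal L\subset\cal W$ is a subbundle, and $\theta$ has poles precisely at $p_1,\dots,p_l$; since (semi)stability is open it suffices to test this one bundle, and a final perturbation of the weights upgrades a semistable example to a stable one. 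Recall from Definition~\ref{def: shb} and the discussion following it that $(\cal E,\theta)$ is (semi)stable exactly when every subsystem of Hodge bundles has parabolic slope at most $\parmu\cal E=\pardeg\cal E/3$, and that in the $(1,2)$ case these subsystems are either of type~\ref{sub of V}, a line subbundle $\cal S\subset\cal V$, or of type~\ref{containing L}, a rank two bundle $\cal L\oplus\cal S'$ with $\theta(\cal L)\subset\cal S'\otimes\Omega^1(\log S)$.

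For type~\ref{containing L} the point is that there is essentially only one such subbundle. Since $\cal V$ has rank $2$ and $\cal S'$ rank $1$, the containment $\theta(\cal L)\subset\cal S'\otimes\Omega^1(\log S)$ forces $\cal S'\otimes\Omega^1(\log S)$, being a rank one subbundle containing $\theta(\cal L)$, to coincide with the saturation $\hat{\cal L}$ of $\theta(\cal L)$ inside $\cal V\otimes\Omega^1(\log S)$; thus $\cal S'=\hat{\cal L}\otimes(\Omega^1(\log S))^\vee$ is uniquely determined. Because $\theta$ was chosen so that $\cal L\hookrightarrow\cal W$ is already a subbundle, a degree count gives $\deg\cal S'=\deg\cal L-(l-2)$, hence $\deg(\cal L\oplus\cal S')=2\deg\cal L-(l-2)$; and the induced parabolic weights are dictated by the residue condition, which at each of the $m$ points with $\beta_1(p)\le\alpha(p)<\beta_2(p)$ forces $\res_p\theta$ to carry $\cal L_p$ into the flag line of $\cal V_p$ (pinning the weight of $\cal S'$ there) while leaving $\cal S'_p$ in general position at the remaining $s-m$ points. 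Assembling $\pardeg(\cal L\oplus\cal S')$ and dividing by $2$ produces precisely the stated type~\ref{containing L} inequality.

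For the type~\ref{sub of V} inequalities I would apply the parabolic shift at every point of $S$ (twice at each of the $m$ points where $\beta_1(p)\le\alpha(p)<\beta_2(p)$, the extra shift only moving $\cal L$ and leaving $\tilde{\cal V}$ and $\tilde{\cal W}$ unchanged). By Lemma~\ref{lem: shifting preserves SHB} and its corollary this yields a system of Hodge bundles $(\tilde{\cal E},\tilde\theta)$ with $\pardeg\tilde{\cal E}=\pardeg\cal E$ and a parabolic-slope-preserving bijection between $\theta$-subbundles of $\cal E$ and $\tilde\theta$-subbundles of $\tilde{\cal E}$. Every type~\ref{sub of V} subbundle is $\cal S(2-l)\subset\cal V$ for a line subbundle $\cal S\subset\cal W$ of some degree $\delta$, and its parabolic slope---hence the inequality it produces---depends only on $\delta$ and the incidence pattern $I=(i(p))_{p\in S}$, with $i(p)=2$ at the points where $\cal S_p$ equals the flag line $F_1(p)$ of $\cal W_p$ (these being the points contributing weight $\beta_2(p)$ rather than $\beta_1(p)$). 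Tracking the shift along such an $\cal S$: at the $N=\lvert\{p:i(p)=2\}\rvert$ points where $\cal S_p=F_1(p)$ lies along the line being shifted one gets $\tilde{\cal S}=\cal S(p)$, and at the remaining points $\tilde{\cal S}=\cal S$ is unchanged, so $\deg\tilde{\cal S}=\delta+N$ while $\pardeg$ is preserved. The decisive claim is that after shifting at every point the incidence data of $\tilde{\cal S}$ inside $\tilde{\cal W}$ no longer obstructs anything---in particular the shift at a special point, carried out along $\cal L_p=\theta(\cal L)_p$, disposes of the one non-generic flag line---so that an $\cal S$ with the prescribed numerical data exists if and only if $\cal O(\delta+N)$ occurs as a subbundle of $\tilde{\cal W}$, a numerical condition on the splitting type of $\tilde{\cal W}$ playing the role of nonvanishing of a Gromov--Witten number. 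Translating back, $\cal S(2-l)\subset\cal V$ has $\pardeg=\delta-(l-2)+\sum_{p\in S}\beta_{i(p)}(p)$, so comparison with $\pardeg\cal E/3$ is exactly the asserted inequality.

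Necessity of all these inequalities is immediate, since each corresponds to an honest subbundle; for the converse one runs the same analysis on the generic object and perturbs. The hard part will be the type~\ref{sub of V} step: I expect the genuine work to lie in following the shift through the $m$ special points, checking that it really does ``free'' the non-generic flag line of $\tilde{\cal W}$ so that no hidden incidence obstruction survives, and keeping scrupulous track of the degree bookkeeping (the single-versus-double shift convention, the identity $\deg\tilde{\cal S}=\delta+N$, and the slope-preservation from the corollary) so that the index $\delta$ in ``$\cal O(\delta+N)$ is a valid subbundle of $\tilde{\cal W}$'' matches the $\delta$ appearing in the slope inequality.
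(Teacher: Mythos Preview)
Your proposal is correct and follows essentially the same strategy as the paper: the unique type~\ref{containing L} subbundle is $\cal L\oplus\theta(\cal L)(2-l)$ with weights read off from where $\res_p\theta$ is forced to land, and the type~\ref{sub of V} inequalities are obtained by shifting at every point of $S$ and invoking the slope-preserving correspondence between line subbundles of $\cal W$ and of $\tilde{\cal W}$, exactly as in Proposition~\ref{prop: shifted ineq} and the explicit analysis in Section~\ref{section: shifting and subbundles}. Your degree bookkeeping ($\deg\tilde{\cal S}=\delta+N$, double-shift at the $m$ special points) and your identification of the remaining work---verifying that the shift genuinely frees the non-generic flags so the existence condition reduces to the purely numerical ``$\cal O(\delta+N)\hookrightarrow\tilde{\cal W}$''---match the paper's treatment precisely; the paper fills this in via the explicit local computations of Section~\ref{section: shifting and subbundles} and the splitting-type lemmas of Section~\ref{section: shift and splitting type}.

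One small remark: your computation of the type~\ref{containing L} weights (namely $\beta_2$ at the $m$ special points and $\beta_1$ at the remaining ones) agrees with the paper's own derivation in the text and in the proof, though the displayed inequality in the theorem \emph{statement} has the indices swapped; your version is the correct one.
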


\subsection{Shifting operations in the (1,2) case}
\label{section: shifting and subbundles}
We will examine what the shifting operation does more explicitly in this $(1,2)$ scenario.

Let $\cal W$ be a rank 2 bundle over $\PP^1$. Over $p \in \PP^1$, give $\cal W_p$ a flag $F_1 \subset F_2 = \cal W_p = \C^2$. To complete the parabolic structure of $\cal W$, suppose we give $\cal W$ weights $\beta_1 \leq \beta_2$ at $p$ and for simplicity, we ignore all other points and say $\cal W$ only has parabolic structure at $p$. Then in this case $F_1$ has weight $\beta_2$ and $F_2$ has weight $\beta_1$.

Let $t$ be a local parameter of $p$ and give $\cal W_p = \C^2$ a basis compatible with the filtration $\set{e_1, e_2}$ such that
$$
F_1 = \C e_1, \quad F_2 = \C e_1 \oplus \C e_2.
$$
Locally around $p$, we have that $\cal W$ is trivialized as $\calO e_1 \oplus \calO e_2$. When we apply the shift operation around $p$, we get a new bundle of degree 1 higher, $\tilde{\cal W}$ with flag $\tilde F_1 = \C e_2 \subset \tilde F_2 = \C e_2 \oplus \C e_1/t$ and local trivialization $\calO e_2 \oplus \calO e_1 /t$. After shifting, we have that the parabolic weights of $\tilde{\cal W}$ are $\tilde \beta_1 = \beta_2 - 1 \leq \tilde \beta_2 = \beta_1$, with $\tilde F_1$ taking weight $\tilde \beta_2 = \beta_1$ and $\tilde F_2$ has weight $\tilde \beta_1 = \beta_2 - 1$.

\subsubsection{Shifting and subbundles}
Suppose $\cal S$ is a subbundle of $\cal W$ of degree $d$. Then locally around $p$, $\cal S$ is given by a section $a e_1 + b e_2$. When we shift, let $\tilde{\cal S}$ denote the subbundle $\cal S$ as viewed as a \emph{subsheaf} of $\tilde{\cal W}$. Locally, $\tilde{\cal S}$ is given by the section 
$$
at \frac{e_1}{t} + b e_2.
$$
The goal now is to understand the correspondence between subbundles of $\tilde{\cal W}$ and subbundles of $\cal W$ and also how the parabolic structure is affected by this correspondence. 

Suppose $\cal S$ is a subbundle of $\cal W$ that hits $F_1$. That means $\cal S_p = \C e_1$, and so locally $\cal S$ can be given by the section $e_1 + at e_2.$ In this case, once we shift, we get that $\tilde{\cal S}$ is given by 
$$
t e_1 / t + at e_2
$$
which means that $\tilde{\cal S}_p$ now fails to be a subbundle of $\tilde{\cal W}$. We can then replace $\tilde{\cal S}$ by its saturation. Doing this, we get that $\tilde{\cal S}$ is given locally by
$$
e_1 /t + a e_2
$$
which is a subbundle that does not hit $\tilde F_1 = \C e_2$, and hence $\tilde{\cal S}$ gets the weight coming from $\tilde F_2$, i.e. $\tilde{\cal S}$ gets weight $\tilde \beta_1 = \beta_2 - 1$. Since $\cal S$ hits $F_1$, this means that $S$ gets weight $\beta_2$. Since locally $\tilde S$ is given by $e_1/t + at e_2$, it means that we now have a pole of $\tilde{\cal S}$ at $p$, and hence $\deg \tilde{\cal S} = d + 1$. So then looking at the parabolic degrees of $\cal S$ and $\tilde{\cal S}$, we get that
$$
\pardeg \cal S = d  + \beta_2 \quad \text{and} \quad \pardeg \tilde{\cal S} = d + 1 + \tilde \beta_1 = (d + 1) + (\beta_2 - 1) = d + \beta_2.
$$
So in fact, we have that $\pardeg{\cal S} = \pardeg \tilde{\cal S}$ in this case.

Suppose now $\cal S$ is a subbundle of $\cal W$ that does not hit $F_1$. So then locally $\cal S$ is given by 
$$
a e_1 + e_2,
$$
and $\pardeg \cal S = d + \beta_1.$ Then applying the parabolic shift operation, we get that $\tilde{\cal S}$ is given locally by section
$$
at(e_1 / t) + e_2.
$$
Note that we do not have a pole at $p$, so in fact $\tilde{\cal S}$ is still a subbundle of $\tilde{\cal W}$. Note also that $\tilde{\cal S}$ must hit $\tilde F_1 = \C e_2$ at $p$, so we have that $\pardeg \tilde{\cal S} = d + \tilde \beta_2 = d + \beta_1$. So then, we have that once again $\pardeg S = \pardeg \tilde S$. This means that shifting in this way does not affect the parabolic degree. 

Next, we want to look at subbundles of $\tilde{\cal W}$. Suppose $\tilde{\cal S}$ is a subbundle of $\tilde{\cal W}$ of degree $d + 1$. Suppose first that $\tilde{\cal S}$ hits $\tilde F_1 = \C e_2$ at $p$. Then we can rewrite the local section giving $\tilde{\cal S}$ as
$$
e_2 + at e_1/t.
$$
Now note that $\tilde{\cal S}$ now does not have a pole at $p$ and so in fact is also a subbundle of $\cal W$. As above, to distinguish between $\tilde S$ as a subbundle of $\tilde{\cal W}$ and as a subbundle of $\cal W$, we again denote by $\cal S$ the subbundle of $\cal W$ given by $\tilde{\cal S}$. As a subbundle of $\cal W$, $\cal S$ is given by local section $e_2 + a e_1$, which in particular means that $\cal S$ must not $F_1 = \C e_1$ at $p$. In terms of parabolic structure, we can say that
$$
\pardeg \tilde{\cal S} = d + 1 + \tilde \beta_2 = d + 1 + \beta_1
$$
and 
$$
\pardeg{\cal S} = d + 1 + \beta_1.
$$
So in particular, $\pardeg S = \pardeg \tilde S$ once again in this situation.                                              

Now suppose $\tilde{\cal S}$ is a subbundle of $\tilde{\cal W}$ that does not hit $\tilde F_1 = \C e_2$. This means that locally $\tilde S$ is given by
$$
a e_2 + e_1/t.
$$
Consider a subbundle $\cal S$ of $\cal W$ given locally by 
$$
a t e_2 + e_1.
$$
This is a subbundle that hits $F_1 = \C e_1$, so then from the above arguments, we see that $\tilde{\cal S}$ arise as a result of taking the saturation of $\cal S$ when viewed as a subsheaf of $\tilde{\cal W}$. Then, same argument as above tells us that $\deg \cal S = \deg \tilde{\cal S} - 1 = d$. Under these circumstances, we see that 
$$
\pardeg \tilde{\cal S} = d + 1 + \tilde \beta_1 = d + 1 + \beta_2 - 1 = d + \beta_2 \quad \text{and} \quad \pardeg{\cal S} = d + \beta_2.
$$
Note in this case that $\pardeg \tilde{\cal S} = \pardeg \cal S$ as well. 

So in summary, what we have is that writing the stability inequalities for $\tilde{\cal W}$ implies every stability inequality for $\cal W$, as the subbundles of $\tilde{\cal W}$ corresponds exactly to the subbundles of $\cal W$ and as we checked, they all have the same parabolic degrees as well.

\subsection{Parabolic shifts on the splitting type}
\label{section: shift and splitting type}
Now we look at $\cal L \subset \cal W$, where $\cal L \simeq \calO (-d)$ and $\cal W \simeq \calO \oplus \calO$. Suppose that we have $m$ points $p_1, \ldots, p_m$ such that $\theta(\cal L)_p$ is the special subspace $F_p \subset \cal W_p$. Numerically speaking, $p_1, \ldots, p_m$ are the points where $\beta_1 (p) \leq \alpha(p) < \beta_2 (p)$. After doing parabolic shifts along the $m$ points, we shift the bundle to
$$
\calO(m-d) \hookrightarrow \tilde{\cal W}.
$$
The question is then what happens to the splitting type of $\tilde{\cal W}$? As a first step, we look at when $\tilde{\cal W}$ is guaranteed to fail to be evenly split. 

\begin{lem}
\label{lem: shifting not generic ineq}
    If $m > 2d + 1$, then $\tilde{\cal W}$ fails to be evenly split. 
\end{lem}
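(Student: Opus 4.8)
The plan is to show that after the $m$ shifts the saturated image of $\cal L$ is a line subbundle of $\tilde{\cal W}$ of degree strictly larger than what any evenly split rank $2$ bundle of that degree can contain. First I would record the two relevant degree computations. Each parabolic shift raises the degree of the ambient bundle by one (as in the proof of the corollary in Section \ref{section:shifting}), so from $\deg \cal W = 0$ we get $\deg \tilde{\cal W} = m$ and $\operatorname{rank} \tilde{\cal W} = 2$. On the other hand, at each of the $m$ points $p_i$ we shift precisely along the line $F_{p_i} = \theta(\cal L)_{p_i}$ that $\cal L$ meets, so the computation of Section \ref{section: shifting and subbundles} in the case where the subbundle hits $F_1$ applies at each $p_i$: the saturation of $\cal L$ gains one unit of degree at every such point. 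Hence $\tilde{\cal L}\simeq \cal O(m-d)$ sits inside $\tilde{\cal W}$ as a subbundle.

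Next I would invoke the elementary fact that a nonzero line subsheaf of $\cal O(a)\oplus\cal O(b)$ has degree at most $\max(a,b)$: composing the inclusion with one of the two projections gives a nonzero map from the line bundle into $\cal O(a)$ or into $\cal O(b)$. Consequently, if $\tilde{\cal W}$ were evenly split, then necessarily $\tilde{\cal W}\simeq \cal O(\lceil m/2\rceil)\oplus\cal O(\lfloor m/2\rfloor)$, and the existence of the subbundle $\cal O(m-d)$ would force $m-d \le \lceil m/2\rceil$.

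Finally I would derive the contradiction arithmetically: from $m > 2d+1$ we get $2(m-d) > m+1$, i.e. $m-d > \tfrac{m+1}{2} \ge \lceil m/2\rceil$, which is incompatible with $m-d \le \lceil m/2\rceil$. Therefore $\tilde{\cal W}$ cannot be evenly split. I do not expect a genuine obstacle here beyond keeping the degree bookkeeping straight; the one point I would double-check is that the bound is sharp, since for $m = 2d+1$ one has $m-d = \lceil m/2\rceil$ and $\tilde{\cal W}$ can indeed remain evenly split, which is exactly why the hypothesis reads $m > 2d+1$ rather than $m \ge 2d+1$.
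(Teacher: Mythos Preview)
Your proposal is correct and follows essentially the same approach as the paper: both arguments observe that after the $m$ shifts one has a line subbundle $\cal O(m-d)$ inside the degree-$m$ rank-$2$ bundle $\tilde{\cal W}$, and then derive the contradiction from the fact that an evenly split bundle of degree $m$ cannot contain a line subbundle of degree exceeding $\lceil m/2\rceil$. The only cosmetic difference is that the paper bounds the larger summand by $m/2 + 1/2$ rather than $\lceil m/2\rceil$, but the arithmetic is the same.
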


\begin{proof}
    Shifting along the $m$ points, we have 
    $$
    \calO (m-d) \hookrightarrow \tilde{\cal W}
    $$
    where $\tilde{\cal W}$ has degree $m$. Suppose that $\tilde{\cal W} \simeq \calO (a) \oplus \calO(b)$ and that $a \geq b$. In order for $\tilde{\cal W}$ to be evenly split, we would need $a \leq \ceil{m/2}$. For convenience, we can simply recast this inequality as 
    $$
    a \leq m/2 + 1/2.
    $$
    For $\calO(m-d)$ to even inject into $\tilde{\cal W}$, we would need 
    $$
    m-d \leq a \leq m/2 + 1/2.
    $$
    Thus we have $m-d \leq m/2 + 1/2$ which implies that $m \leq 2d + 1$. So if $m > 2d + 1$ $\tilde{\cal W}$ cannot be generic. 
\end{proof}

\begin{ex}
    For a simple example of the situation, consider $\cal L = \calO (-1) \hookrightarrow \cal W = \calO \oplus \calO$. This corresponds to a degree 1 map $\PP^1 \to \PP^1$, which we know can send 3 points to any 3 points. That means for 3 points, we can guarantee that the subspaces coming from $\cal L$ are generic. We can recover this 3 from Lemma \ref{lem: shifting not generic ineq} because, here $d = 1$, so $2d + 1 = 3$.

    Suppose we have $m > 2d + 1$ points coming from $\cal L$. That is, we have $m$ points, say $p_1, \ldots, p_m$ in $S$ such that $\theta (\cal L)_p = F_p \subset \cal W_p$ for each $p = p_1, \ldots, p_m$. We want to apply the parabolic shifting operation along these $m$ points. We can actually see very explicitly what happens to the splitting type. 

    We do the shifts one at a time. We can suppose that $\theta(\cal L) \subset \cal W$ generically enough such that at any three points, $\theta(\cal L)_p$ sits as a generic subspace inside $\cal W_p$. Suppose we shift first along $p_1, p_2, p_3$.
    \begin{enumerate}[label=]
        \item No shifts, i.e. the original situation: $\calO(-1) \hookrightarrow \calO \oplus \calO$
        \item Shift at $p_1$: $\calO \hookrightarrow \calO(1) \oplus \calO$
        \item Shift at $p_2$: $\calO (1) \hookrightarrow \calO(1) \oplus \calO(1)$
        \item Shift at $p_3$: $\calO (2) \hookrightarrow \calO(2) \oplus \calO(1)$
    \end{enumerate}
    Now by the above lemma, we see that afterwards from here, we only get non-evenly split bundles. If we shift along a fourth point $p_4$, we would get
    $$
    \calO(3) \hookrightarrow \calO(3) \oplus \calO(1).
    $$
    Now this must be the case because in order for $\calO(3)$ must inject into $\tilde{\cal W}$, one of the summands of $\tilde{\cal W}$ must be $\cal O(a)$ for $a \geq 3$. Further, we have that $\cal W \subset \tilde{\cal W} \subset \cal W(p_4)$, which numerically translates to
    $$
    \calO(2) \oplus \calO(1) \subset \tilde{\cal W} \subset \calO(3) \oplus \calO(2).
    $$
    Note that $\deg \tilde{\cal W} = \deg \cal W + 1$. So then in order for $\calO(3)$ to inject into $\tilde{\cal W}$, the only possible choice for $\tilde{\cal W}$ is $\calO(3) \oplus \calO(1)$.

    It is then easy to see that continuing further, we get
    $$
    \calO(m-1) \hookrightarrow \calO(m-1) \oplus \calO(1)
    $$
    for the general case after shifting along $m$ points lying on the $\cal L = \calO(-1)$ sitting inside the $\cal W = \calO \oplus \calO$.
\end{ex}

We can suppose that $S \subset \PP^1$ and $\theta(\cal L) \subset \cal W$ are generic enough such that if we shift along less than $2d+1$ points, then the resulting shifted bundle $\tilde{\cal W}$ is still generic.

\begin{lem}
    For $m > 2d+1$, if we shift $\cal L = \calO(-d) \subset \cal W= \calO \oplus \calO$ along the $m$ points, the splitting type of the resulting bundles must be
    $$
    \tilde{\cal L} = \calO(m-d) \subset \tilde{\cal W} = \calO(m-d) \oplus \calO(d).
    $$
\end{lem}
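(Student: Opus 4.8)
The plan is to reduce the statement to a single cohomology vanishing. First I would pin down the two degrees involved. By construction $\tilde{\cal L}$ is the saturation inside $\tilde{\cal W}$ of the image of $\cal L$ under the $m$ successive shifts, and each of the $m$ chosen points $p_1,\dots,p_m$ is, by hypothesis, a point where $\theta(\cal L)_{p_i}$ is the distinguished line of $\cal W_{p_i}$. By the explicit computation in Section \ref{section: shifting and subbundles} for a subbundle that hits $F_1$, a shift at such a point raises the degree of the ambient bundle \emph{and} the degree of the saturated line subbundle by exactly one; since a shift at $p_i$ leaves everything away from $p_i$ untouched, the (possibly already shifted) line subbundle still meets the distinguished line at all the not-yet-used points, so this applies at every stage. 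Hence after all $m$ shifts $\deg\tilde{\cal W}=m$ and $\tilde{\cal L}\cong\calO(m-d)$, sitting inside $\tilde{\cal W}$ as a subbundle.

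Next I would exhibit $\tilde{\cal W}$ as an extension. Since $\PP^1$ is a smooth curve and $\tilde{\cal L}$ is a saturated rank-one subsheaf of the rank-two bundle $\tilde{\cal W}$, the quotient $\tilde{\cal W}/\tilde{\cal L}$ is a line bundle; its degree is $m-(m-d)=d$, so it is isomorphic to $\calO(d)$. Thus $\tilde{\cal W}$ fits into an exact sequence
$$
0 \longrightarrow \calO(m-d) \longrightarrow \tilde{\cal W} \longrightarrow \calO(d) \longrightarrow 0 .
$$
The class of this extension lies in $\Ext^1(\calO(d),\calO(m-d)) \cong H^1(\PP^1,\calO(m-2d))$. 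The hypothesis $m>2d+1$ forces $m-2d\ge 2$, in particular $m-2d\ge -1$, so this group vanishes; the sequence splits, and $\tilde{\cal W}\cong\calO(m-d)\oplus\calO(d)$, which is the claim. (In fact the splitting already holds whenever $m\ge 2d-1$; the stronger bound $m>2d+1$ is exactly the range in which $\tilde{\cal W}$ is forced to be non-evenly-split by Lemma \ref{lem: shifting not generic ineq}, which is why the lemma is stated in that range.)

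I expect the only genuine obstacle to be the first step: one must check carefully that at each of the $m$ shifts the saturated line subbundle's degree really increases by one, and that $\tilde{\cal L}$ ends up an honest subbundle rather than merely a subsheaf of $\tilde{\cal W}$. This is precisely the content of the local ``hits $F_1$'' analysis already carried out in Section \ref{section: shifting and subbundles}, so once that is invoked the remainder is the one-line cohomology computation above. It would also be worth remarking that this argument makes no use of the ambient simplification $\cal W\simeq\calO\oplus\calO$ beyond $\deg\cal W=0$ and $\operatorname{rk}\cal W=2$, so the same reasoning gives the splitting type of $\tilde{\cal W}$ in the general $(1,2)$ setup after replacing $\calO(m-d)\oplus\calO(d)$ by $\calO(m-d)\oplus\calO(w+m-(m-d))$ with $w=\deg\cal W$.
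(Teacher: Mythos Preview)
Your argument is correct and takes a cleaner route than the paper. The paper proceeds inductively: it first invokes the standing genericity assumption on $2d+1$ of the chosen points to reach the intermediate stage $\calO(d+1)\subset\cal W'\simeq\calO(d+1)\oplus\calO(d)$, and then argues one shift at a time, using the sandwich $\cal W'\subset\tilde{\cal W}\subset\cal W'(p)$ together with the forced injection $\calO(d+2)\hookrightarrow\tilde{\cal W}$ to pin down the splitting type at each subsequent step.

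Your approach bypasses the induction entirely: once you know $\tilde{\cal L}\cong\calO(m-d)$ sits as a saturated line subbundle (which, as you say, is exactly the local ``hits $F_1$'' computation of Section~\ref{section: shifting and subbundles} applied at each of the $m$ points), the quotient is forced to be $\calO(d)$ and a single $\Ext^1(\calO(d),\calO(m-d))\cong H^1(\PP^1,\calO(m-2d))=0$ finishes the job. This is shorter and strictly more robust than the paper's argument: it does not rely on the intermediate ``choose $2d+1$ points generic enough so that $\cal W'$ is evenly split'' hypothesis, and as you observe it already determines the splitting type for all $m\ge 2d-1$, not just $m>2d+1$. The paper's induction, by contrast, is anchored at $m=2d+1$ and genuinely needs that genericity to get started.
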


\begin{proof}
    We can choose $2d + 1$ among the $m$ points generic enough so that after shifting along them we get
    $$
    \cal L' = \calO(2d + 1 - d) = \calO(d+1) \subset \cal W'
    $$
    where $\cal L'$ and $\cal W'$ will denote $\cal L$ and $\cal W$ after shifting along the $2d+1$ points. The degree of $\cal W'$ is $2d+1$ and it is generic, thus 
    $$
    \cal W' \simeq \calO(d+1) \oplus \calO(d).
    $$

    We can assume WLOG that $m = 2d+2$. So then shifting one more time, say along $p$, we then have
    $$
    \tilde{\cal L} = \calO(2d+2-d) = \calO(d+2)
    $$
    and $\tilde{\cal W}$ has degree $2d+2$ with $\cal W' \subset \tilde{\cal W} \subset \cal W(p)$. We have that
    $$
    \cal W \simeq \calO(d+1) \oplus \calO(d) \quad \text{and} \quad \cal W(p) \simeq \calO(d+2) \oplus \calO(d+1).
    $$
    Then in order for $\tilde{\cal L} = \calO(d+2)$ to inject into $\tilde{\cal W}$, the only option available is
    $$
    \tilde{\cal W} = \calO(d+2) \oplus \calO(d).
    $$
    Continuing this way for arbitrary $m > 2d + 1$, we would get
    $$
    \calO(m-d) \hookrightarrow \calO(m-d) \oplus \calO(d).
    $$
\end{proof}

Now $p_1, \ldots, p_m$ are the only points that impose special conditions on the flags of $\cal V$, and hence at all the other points, we may choose the flags to be as generic as we like. Then, shifting along all the other points will make our bundle less special. If we shift along $p_{m+1}, \ldots, p_l$, we would get then that splitting type of $\tilde{\cal W}$ would be $\calO(m-d) \oplus \calO (d + l - m)$ if $d + l - m \leq m - d$ and otherwise $\tilde{\cal W}$ would be a generic rank 2 degree $l$ bundle. However, we must now be careful about shifting at all the other points.

For $p = p_{l+1}, \ldots, p_s$, we have that $\beta_2(p) \leq \alpha(p)$, which means that $\theta$ does not admit poles at these points, but then after shifting, we have that $\tilde \alpha(p) < \tilde \beta_2 (p)$, which means then that we suddenly get a new pole. This means that extra book keeping is required here. Fortunately, what happens is that we get an extra pole of $\tilde \theta_p$, and so we will consider instead the shifted bundle but now twisted with $\cal O(p)$. For example, when we have $\tilde{\cal W} \simeq \cal O(m-d) \oplus \cal O(d+l-m)$, i.e. we have shifted at all the $l$ points where $\theta$ already has a pole, if we shift one more time and twist by $\cal O(p_{l+1})$, we now only consider $\tilde{\cal W}(p_{l+1}) \simeq \paren*{\cal O(m-d) \oplus \cal O(d+l-m) } \otimes \cal O(1).$ We will abuse notation to denote by $\tilde{\cal W}$ what happens to $\cal W$ after shifting at all points $p \in S$ as described in the above procedures, including this extra bookkeeping of adding in extra poles. 
 
\subsection{The semistability inequalities we need}

Now we want to write down the inequalities guaranteeing semistability of our bundle $\cal E = \cal L \oplus \cal V$. Recall that we defined $\cal W = \cal V(l - 2 )$. Denote the correction term by $Q = l - 2$ for simplicity of notation. Recall that to check stability of $\cal E$ requires us to check for two types of subbundles:
\begin{itemize}
    \item[\ref{sub of V}] Subbundles purely of $\cal W$ (which are the same as subbundles of $\cal V$ after an appropriate twist).
    
    \item[\ref{containing L}] The subbundle $\cal L \oplus \paren*{\theta (\cal L)(-Q)}$.
    
\end{itemize}
The first case gives us an easy inequality to write, since by our assumptions, $\theta(\cal L)$ hits the special subspace of $\cal V$ only when $\beta_1(p) \leq \alpha(p) < \beta_2(p)$, which by assumption only happens when $p = p_1, \ldots, p_m \in S$. This means that the inequality we need to write here is
$$
\frac{2\deg \cal L - Q + \sum_{p \in S} \alpha(p) + \sum_{i=1}^m \beta_2(p_i) + \sum_{i = m+1}^s \beta_1(p_i)}{2} \leq \frac{\pardeg \cal E}{3}.
$$

We now only need to consider the subbundles of the second type, i.e. subbundles of $\cal V$, which are the same as subbundles of $\cal W$. Now note that we can write down every single potential inequality to check for the second type of bundle already:
$$
\delta - Q + \omega \leq (\pardeg \cal E)/3,
$$
where here we use $\omega$ to denote the sum of weights that any potential degree $\delta$ subbundle of $\cal W$ may hit. The question that remains is when do we write such an inequality? Denote by $\tilde{\cal W}$ the result of parabolically shifting $\cal W$ along every point of $S$. By the above results, we know exactly what the splitting type of $\tilde{\cal W}$ is. 

\begin{prop}
\label{prop: shifted ineq}
Let $N$ denote the number of times $\beta_2(p)$ appears in $\omega$, then the inequality $\delta - Q + \omega \leq (\pardeg \cal E)/3$ is a necessary inequality to guarantee the stability of $\cal E$ if and only if $\calO(\delta + N)$ is a valid subbundle of $\tilde{\cal W}$.
\end{prop}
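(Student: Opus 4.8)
The plan is to trade the Schubert-type incidence conditions that a subbundle of $\cal V$ imposes for bookkeeping of degrees, one marked point at a time, so that after shifting at every point of $S$ the question of which inequalities are forced becomes a purely numerical question about line subbundles of the bundle $\tilde{\cal W}$, whose splitting type is already known from Section~\ref{section: shift and splitting type}.

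First I would make the meaning of ``necessary inequality'' explicit. By the discussion preceding the proposition, stability of $\cal E$ need only be tested against subbundles of the two types \ref{sub of V} and \ref{containing L}; the type \ref{containing L} subbundle $\cal L\oplus\theta(\cal L)(-Q)$ is dealt with separately (it gives the other inequality appearing in Theorem~\ref{thm:(1,2)}), so here it is enough to treat subbundles of type \ref{sub of V}. A rank-$1$ subbundle of $\cal V$ is, after twisting by $\cal O(-Q)$, a rank-$1$ subbundle $\cal S\subset\cal W$, and if $\cal S$ has degree $\delta$ and its fibre meets the rank-$1$ flag subspace $F_1(p)\subset\cal W_p$ exactly at the $N$ points where $i(p)=2$ (so that $\cal S$ acquires weight $\beta_2(p)$ there and $\beta_1(p)$ elsewhere), then $\cal S(-Q)\subset\cal V$ has parabolic degree $\delta-Q+\omega$ and contributes precisely the inequality $\delta-Q+\omega\le(\pardeg\cal E)/3$. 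Thus the inequality attached to the pair $(I,\delta)$ is a genuine stability inequality exactly when such an $\cal S$ exists, and by the Harder--Narasimhan discussion of Section~\ref{HN filt} it suffices to consider the cases where it is forced.

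Next I would apply the parabolic shift at every point of $S$, using the double-shift and twist-by-$\cal O(p)$ bookkeeping of Section~\ref{section: shift and splitting type} at the points where $\theta$ carries $\cal L_p$ into the special subspace, respectively where $\theta$ has no pole, and feed in the per-point analysis of Section~\ref{section: shifting and subbundles}. That analysis provides a parabolic-degree-preserving bijection between rank-$1$ subbundles of $\cal W$ and of $\tilde{\cal W}$ under which, at each marked point, a subbundle meeting $F_1(p)$ gains one unit of degree and ceases to meet the shifted flag, while a subbundle missing $F_1(p)$ keeps its degree and now meets $\tilde F_1(p)$. Iterating over all $s$ points sends the $\cal S$ above to a rank-$1$ subbundle $\tilde{\cal S}\subset\tilde{\cal W}$ of degree $\delta+N$ with $\pardeg\tilde{\cal S}=\pardeg\cal S$; conversely, the inverse shifts recover $\cal S$ from any $\tilde{\cal S}$ realizing the complementary incidence pattern. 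Hence the existence of $\cal S$ is equivalent to the existence of such a $\tilde{\cal S}$.

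The last step, and the one carrying the real content, is to observe that after shifting at \emph{every} point of $S$ this existence question has no incidence content left: each shift converted one flag-meeting condition into one unit of degree, so the only surviving constraint is that $\cal O(\delta+N)$ inject into $\tilde{\cal W}$ as a subbundle, i.e.\ that $\delta+N$ not exceed the larger of the two splitting integers of $\tilde{\cal W}$ --- which is precisely ``$\cal O(\delta+N)$ is a valid subbundle of $\tilde{\cal W}$'', read off from the splitting type of Section~\ref{section: shift and splitting type}. One checks this by running the shift correspondence backwards: the meeting pattern with the $\tilde F_1(p)$ dictated by $I$ pulls back to open (``missing the flag'') conditions on the intermediate bundles together with the degree bookkeeping, so it is attainable by some valid-degree line subbundle of $\tilde{\cal W}$ as soon as one exists at all. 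Assembling this chain of equivalences yields both directions of the stated ``if and only if''. I expect the main obstacle to be exactly this verification that the global shift leaves no residual Schubert condition --- equivalently, that the subbundle bijection of Section~\ref{section: shifting and subbundles} is compatible with reading off the induced parabolic weights at all marked points simultaneously --- which is why the point-by-point and splitting-type computations of Sections~\ref{section: shifting and subbundles} and~\ref{section: shift and splitting type} are needed.
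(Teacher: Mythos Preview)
Your proposal is correct and follows essentially the same approach as the paper. The paper's own proof is a two-sentence summary---it simply states that the inequality is needed precisely when a degree $\delta$ subbundle $\cal S\subset\cal W$ hitting $F_1(p)$ at the $N$ specified points exists, and that under the global shift such an $\cal S$ corresponds to an $\cal O(\delta+N)$ inside $\tilde{\cal W}$---leaning entirely on the per-point subbundle correspondence of Section~\ref{section: shifting and subbundles} and the splitting-type computation of Section~\ref{section: shift and splitting type}; your write-up unpacks exactly this argument and makes explicit the point (left implicit in the paper) that after shifting at every marked point no Schubert-type incidence condition survives, only the numerical degree bound.
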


\begin{proof}
    We need to write such an inequality if and only if there exists a degree $\delta$ subbundle $\cal S$ of $\cal W$ such that $\cal S_p = F_1 (p)$ for every $\beta_2(p)$ that appears in $\omega$. Such a subbundle exists if and only if, after shifting, we have a subbundle $\tilde{\cal S} \simeq \calO(\delta + N)$ of $\tilde{\cal W}$.
\end{proof}

\begin{rmk}
    We can rephrase the above Proposition by looking at the subbundles of $\cal V$ and compared with the subbundles of $\tilde{\cal V}$ instead. That is, for any $\cal O(\delta)$ being a valid subbundle of $\cal V$, the inequality $\delta + \omega \leq \pardeg \cal E$ is a necessary inequality to guarantee the semistability of $\cal E$ if and only if $\cal O(\delta + N)$ is a valid subbundle of $\tilde{\cal V}$. 
\end{rmk}

\begin{rmk}
    $N$ in the above proposition is exactly the $N$ given in Theorem \ref{thm:(1,2)}. That is, we set
    $$
    \omega = \sum_{p \in S} \beta_{i(p)} (p)
    $$
    for $I = (i(p))_{p \in S}$ as in the Theorem. And so $N = \abs{\set{p \in S \mid i(p) = 2}}$
\end{rmk}

\subsection{Proof of Theorem \texorpdfstring{\ref{thm:(1,2)}}{(1,2)}}
\begin{proof}
    We are given the data of weights $\alpha(p), \beta_1(p) \leq \beta_2(p)$ for $p \in S$ and fixed degrees $D = \deg \cal E$ and $d = \deg \cal L$. Here $\cal L \simeq \cal O (d)$, and we have that $\cal V$ must have degree $v = D - d$. We deform $\cal V$ along with $\cal L$ to be as generic as possible, i.e. we ask for $\cal W = \cal V \otimes \cal O(l-2)$ to be generic containing $\cal L$ as a subbundle. Thus $\cal V$ is evenly split of degree $v$ if $d \leq \ceil{w/2}$ or $\cal W \simeq \cal O(d) \oplus \cal O(w-d)$ otherwise. 

    We then apply the shifting operation along every point of $S$ as described above. This gives us $\tilde{\cal V}, \tilde{\cal W} \simeq \tilde{\cal V} (l - 2)$ as described in Section \ref{section: shift and splitting type}. In such a situation, the inequalities we need are described in the above. 

    For inequalities of type \ref{sub of V}, we are looking at (line) subbundles purely of our rank 2 bundle $\cal W$, or equivalently, purely of $\cal V$. Proposition \ref{prop: shifted ineq} gives exactly the inequalities of this type. That is, for all $I = (i(p))_{p \in S}$ with $i(p) \in \set{1,2}$ and for all $\delta$ such that $\cal O(\delta + N)$ is a valid subbundle of $\tilde{\cal W}$, we must have the inequality
    $$
    \delta - (l-2) + \sum_{p \in S} \beta_{i(p)} (p) \leq \frac{\pardeg \cal E}{3}.
    $$

    For the inequality of type \ref{containing L}, we are looking at the subbundle $\cal L \oplus \theta(\cal L) (2-l)$. We get the inequality
    $$
    \frac{2 \deg \cal L - (l-2) + \sum_{p \in S} \alpha(p) + \sum_{i=1}^m \beta_2(p_i) + \sum_{i = m+1}^s \beta_1(p_i)}{2} \leq \frac{\pardeg \cal E}{3}.
    $$

\end{proof}

\subsection{Examples}
\begin{ex}
    Suppose we have $\cal L \simeq \cal O(1)$ and $\deg \cal V = 0$ and $S = \set{0, 1, \infty}$. Suppose that our weights are of the form
    \begin{align*}
        & \alpha(0) < \beta_1 (0) < \beta_2 (0), \\
        & \beta_1 (1) < \alpha(1) < \beta_2 (1), \\
        & \beta_1 (\infty) < \alpha (\infty) < \beta_2 (\infty).
    \end{align*}
    We suppose that $\pardeg \cal E = 0$, so we have that
    $$
    \sum_{p \in S} \alpha(p) + \beta_1 (p) + \beta_2 (p) = -1.
    $$
    Here, our correction term $Q = 3 - 2 = 1$, so we can suppose $\cal W \simeq \cal O(1) \oplus \cal O(1)$, and we have $\theta : \cal O(1) \to \cal O(1) \oplus \cal O(1)$. The semistability inequality we need to write then for the subbundle $\cal L \oplus \theta(\cal L) (-Q)$ is 
    $$
    1 + (\alpha(0) + \alpha(1) + \alpha(\infty)) + \beta_2(1) + \beta_2 (\infty) + \beta_1 (0) \leq 0.
    $$
    
    We then apply the shift operation at every point $p \in S$ to get that $\tilde{\cal V} = \cal O(1) \oplus \cal O(2)$. We can then look what happens under the shifting operation to subbundles of $\cal V$ and compare with $\tilde{\cal V}$. 

    We need that $\cal O(\delta)$ to be a valid subbundle of $\cal V = \cal O \oplus \cal O$, so here $\delta = 0$ will do (as it has maximal degree). Then we need that $\cal O(\delta + N)$ be a valid subbundle of $\tilde{\cal V} \simeq \calO(1) \oplus \cal O (2)$, and here $N = 2$ is one. The semistability inequalities that we get using $\delta = 0$ and $N = 2$ are 
    \begin{align*}
        & \beta_2 (0) + \beta_2 (1) + \beta_1 (\infty) \leq 0, \\
        & \beta_2 (1) + \beta_2 (\infty) + \beta_1 (0) \leq 0, \\
        & \beta_2 (0) + \beta_2 (\infty) + \beta_1 (1) \leq 0.
    \end{align*}
\end{ex}

\begin{ex}
    We take $\cal L$, $\cal V$ and $S$ the same as the previous example. This time we ask for weights such that
    \begin{align*}
        & \beta_1 (0) < \alpha(0) < \beta_2 (0), \\
        & \beta_1 (1) < \alpha(1) < \beta_2 (1), \\
        & \beta_1 (\infty) < \alpha(\infty) < \beta_2 (\infty).
    \end{align*}
    Once again $Q = 3 - 2 = 1$, and so the inequality for $\cal L \oplus \theta(\cal L) (-Q)$ is 
    $$
    1 + \alpha(0) + \alpha(1) + \alpha(\infty) + \beta_1 (0) + \beta_1 (1) + \beta_1 (\infty) \leq 0.
    $$
    Now we need to apply the shift operation at each point of $S$. The difference here is that we have that each flag of $\cal V$ must come from $\cal L$ in this case, and so this example highlights shifting to nongeneric bundles.

    As before we can suppose that we are in the situation of $\cal L = \cal O(1) \to \cal W = \cal O(1) \oplus \cal O(1)$. If we then apply the shifting operation at each point $p \in S$, we get that
    $$
    \tilde{\cal L} = \cal O(4) \to \tilde{\cal W} = \calO(1) \oplus \calO(4).
    $$
    And so here we have $\tilde{\cal V} \simeq \calO \oplus \calO(3).$ Since $\cal V = \cal O \oplus \cal O$, we can again take $\delta = 0$ in this case, so we are looking at $\cal S = \cal O \hookrightarrow \cal V = \cal O \oplus \cal O.$ We then need $\cal O (N)$ to be a valid subbundle of $\cal O \oplus \cal O(3) \simeq \tilde{\cal V}$. In this case, we can take $N = 3$ for the worst case, i.e. we can have our subbundle $\cal S$ hit the special flag of $\cal V_p$ for every $p \in S$. Because this situation is maximal, we get just one semistability inequality:
    $$
    \beta_2 (0) + \beta_2 (1) + \beta_2 (\infty) \leq 0.
    $$
\end{ex}

\subsection{Possible connection to tropical geometry}
The problem of enumerating stability conditions for $(1,2)$ systems Hodge bundles with non-generic rank 2 component is essentially the same as the problem of enumerating subbundles of the non-generic rank 2. In such a case, suppose for simplicity that we are left with the situation that $\theta : \cal L \simeq \cal O \to \cal O \oplus \cal O(-a)$. Consider then the Hirzebruch surface $\PP (\cal O \oplus \cal O(-a))$, and consider a section $C \subset \PP (\cal O \oplus \cal O(-a)))$. By universal property characterizing projective bundles, we must have that the section $C \simeq \PP^1$ corresponds to a line subbundle of $\cal O \oplus \cal O(-a)$. The degree of this subbundle is then determined by the divisor class of $C$ inside $\PP(\cal O \oplus \cal O(-a))$. Suppose we set $F$ to be the fiber class and $C_0$ the 0-section class, then all sections of $\pi : \PP(\cal O \oplus \cal O(-a)) \to \PP^1$ must have divisor class of the form $C = C_0 + b F$. Then to understand the stability conditions for $\cal O \oplus \cal O(-a)$, we are left with understanding the curves corresponding to certain section classes living inside the Hirzebruch surface $\PP( \cal O \oplus \cal O(-a))$. 

The $(1,2)$ system of Hodge bundles situation prescribes that we have certain flags, some coming from $\theta : \cal O \to \cal O \oplus \cal O(-a)$, which would correspond to points lying on the infinity section of $\PP(\cal O \oplus \calO(-a))$, and other flags coming without conditions, which would become generic points on the Hirzebruch surface. The question is then to compute the (logarithmic) Gromov-Witten invariants of curves passing through certain prescribed generic points along with adding certain incidence conditions with respect to the infinity section. Such counts have been computed using techniques from tropical geometry as investigated in \cite{lian2025enumeratinglogrationalcurves}.

\section{The \texorpdfstring{$(1,1)$}{(1,1)} case}
\label{section: (1,1)}
If $(\cal E, \theta)$ is our system of Hodge bundles, then being in the $(1,1)$ case means that $\cal E = \cal L \oplus \cal L'$ with $\rank \cal L = \rank \cal L' = 1$, and $\theta (\cal L) \subset \cal L' \otimes \Omega^1 (\log S)$. Since $\cal L$ and $\cal L'$ are rank 1, we only have one subspace for each component and hence one weight for each component, which we will denote as $\alpha(p)$ and $\alpha'(p)$ for each $p \in S$ respectively. Furthermore, for simplicity, we will restrict to $\pardeg \cal E = 0$. 

To check for stability conditions, it suffices to only look at $\theta$-stable subsheaves compatible with the Hodge decomposition, so, in this case, the only such bundle we need to look at is $\cal L'$. Since we are only looking at stable parabolic Higgs bundles of degree zero, we immediately get the inequality
$$
\deg \cal L' + \sum_{p \in S} \alpha'(p) < \deg \cal L + \deg \cal L' + \sum_{p \in S} (\alpha(p) + \alpha'(p)) = 0.
$$

Further, we will need to control the poles of the map $\theta$. To do this, we look at 
$$
\Res_p \theta : \cal L_p \to \cal L'_p.
$$
This breaks down into two cases: we have that $\alpha'(p) \leq \alpha(p)$ or $\alpha'(p) > \alpha(p)$. In the case that $\alpha'(p) \leq \alpha(p)$, then since $\Res_p \theta$ takes $\cal L_{\alpha(p)} \to \cal L'_{\hat \alpha(p)}$ and $\alpha(p) \geq \alpha'(p)$ and hence $\hat \alpha(p) \geq \alpha' (p)$, this means that $\Res_p \theta$ sends $\cal L_p$ to 0. Hence we do not have a pole at $p \in S$. In the case that $\alpha'(p) > \alpha(p)$, this means that $\alpha'(p) = \hat \alpha(p)$, and $\res_p \theta$ sends $\cal L_p = \cal L_{\alpha(p)}$ to $\cal L'_p = \cal L'_{\alpha'(p)}$, and it cannot be zero, so $\theta$ must have a pole here. Define for $p \in S$
$$
\ell(p) = \begin{cases}
    1 & \text{if } \alpha'(p) > \alpha(p), \\
    0 & \text{else, i.e. } \alpha'(p) \leq \alpha(p).
\end{cases}
$$
$\ell(p)$ counts the number of poles of $\theta$ at $p$, and hence $\sum_{p \in S} \ell(p)$ counts the number of poles of $\theta$. This means then that
$$
\theta : \cal L \to \cal L' \otimes \Omega^1 \paren*{\sum_{p \in S} \ell (p)},
$$
which gives us the degree inequality
$$
\deg \cal L \leq \deg \cal L' + \paren*{-2 + \sum_{p \in S} \ell(p)}.
$$

\subsection{Examples with three points and generic bundle}
Suppose $S = \set{0, 1, \infty}$, all the weights of $\cal E$ are normalized to be between 0 and 1. We also assume that our bundle $\cal E$ is evenly split, i.e. $\cal E \simeq \cal O(a) \oplus \cal O(b)$ where $|a-b| \leq 1$. In this situation, $\sum_{p \in S} (\alpha(p) + \alpha'(p))$ is either even or odd. In either case, there are actually only a finite number of situations that can arise. We will examine each situation here.

Suppose that $\sum_{p \in S} (\alpha(p) + \alpha'(p)) = 2N + 1$ for some integer $N$. Since our bundle has generic splitting type, $\cal E \simeq \calO(-N) \oplus \calO(-N-1)$. So then $\cal L$ and $\cal L'$ could each be either one of $\calO(-N)$ or $\calO(-N-1)$.

Suppose that $\cal L' = \calO(-N-1)$ and $\cal L = \calO(-N)$. So then we have
$$
\theta : \calO(-N) \to \calO(-N-1) \otimes \Omega^1(\sum_{p \in S} \ell(s)).
$$
This condition requires us to have that $\deg \calO(-N) \leq \deg \calO(-N-1) \otimes \Omega (\sum_{p \in S} \ell(p))$, i.e.
$$
-N \leq -N - 3 + \sum_{p \in S} \ell(p),
$$
which implies that the number of poles of $\theta$ must be exactly 3. So then in this case, we have that $\alpha'(p) > \alpha(p)$ for every $p \in \set{0,1,\infty}$. Next, note that the stability condition means that
$$
- N - 1 + \sum_{p \in S} \alpha' (p) < -2N - 1 + \sum_{p \in S} (\alpha(p) + \alpha'(p)).
$$
Rearranging, we see that $N < \sum_{p \in S} \alpha(p) \leq 3$. Therefore, we must then have that $0 \leq N < 3$. So $N = 0, 1, 2$. The question now is then that for each such $N$, do we have a stable subbundle?
\begin{itemize}
    \item $N = 0$. Here $\cal L = \calO$, $\cal L' = \calO(-1)$, $\cal E = \calO \oplus \calO(-1)$. The number of poles is 3, so we need that every $\alpha'(p) > \alpha(p)$, and $\sum_{p \in S} \alpha(p) + \alpha'(p) = 1$. Try each $\alpha'(p) = \frac{2}{9}$ and $\alpha(p) = \frac{1}{9}$. From here, checking the inequality we have
    $$
    -1 + 3 \paren*{\frac{2}{9}} = 1 + \frac{2}{3} = -1/3 < 0. 
    $$
    So this example works. Which gives us existence in this case.

    \item $N = 1$. Here $2N + 1 = 3$, and our bundle $\cal E = \calO(-1) \oplus \calO(-2)$. Consider the following weights.
    \begin{center}
        \begin{tabular}{c c}
            $\alpha'(p)$ & $\alpha(p)$ \\
            \hline
            $7/9$ & $6/9$ \\
            $2/9$ & $1/9$ \\
            $6/9$ & $5/9$ 
        \end{tabular}    
    \end{center}
    Checking stability, we check
    $$
    -2 + \frac{7}{9} + \frac{2}{9} + \frac{6}{9} = -2 + \frac{15}{9} = -3/9 < 0
    $$
    as desired.

    \item $N = 2$. Here $2N + 1 = 5$, and $\cal E = \calO(-2) \oplus \calO(-3)$. $\cal L = \calO(-2)$, $\cal L' = \calO(-3)$. We need to check that $-3 + \sum_{p \in S} \alpha'(p) < 0$. This can be achieve by the following system of weights.
    \begin{center}
        \begin{tabular}{c c}
            $\alpha'(x)$ & $\alpha(x)$ \\
            \hline
            $14/15$ & $13/15$ \\
            $14/15$ & $2/15$ \\
            $14/15$ & $2/15$
        \end{tabular}
    \end{center}
    Checking stability we have
    $$
    -3 + 3 \paren*{\frac{14}{15}} = -\frac{3}{15} < 0.
    $$
\end{itemize}

Next, we try the case $\cal L = \calO(-N-1)$ and $\cal L' = \calO(-N)$. We again have that
$$
\theta : \calO(-N-1) \to \calO(-N) \otimes \Omega^1(\log S),
$$
so
$$
-N - 1 \leq -N - 2 + \sum_{p \in S} \ell(p),
$$
which implies that the number of poles must be greater than or equal to 1. Checking for permissible $N$, we have
$$
- N + \sum_{p \in S} \alpha'(p) < -2N - 1 + \sum_{p \in S} \alpha(p) + \alpha'(p),
$$
so rearranging we get $N < -1 + \sum \alpha(p) \leq 2$, so $N = 0, 1$.

In the $N = 0$ case we need that 
$$
0 + \sum_{p \in S} \alpha'(p) < 0
$$
which is impossible because $\alpha'(p) \geq 0$. So there are no stable parabolic Higgs bundles in this case.

So we are left with the case $N = 1$. Suppose we have 3 poles, then we get that $\alpha' (p) > \alpha(p)$ for every $p \in S$, and $3 = 2N + 1 = \sum_{p \in S} \alpha(p) + \alpha'(p) \leq 2 \sum_{p \in S} \alpha'(p)$, which implies that 
$$
\frac{3}{2} \leq \sum_{p \in S} \alpha'(p),
$$
so in fact there are no stable bundles in this case as well because stability requires us to have
$$
-1 + \sum_{p \in S} \alpha'(xp) < 0,
$$
which is not true for $\sum_{p \in S} \alpha'(p) > 3/2$. So we need only to find the cases of 2 poles and 1 pole. 

In the case of two poles, there is one $p \in S$ where $\theta$ is holomorphic. We can assume WLOG that $p = 0$, in which case we must have that $\alpha'(0) \leq \alpha(0)$, $\alpha'(1) > \alpha(1)$ and $\alpha'(\infty) > \alpha(\infty)$. Stability is equivalent to 
$$
\alpha'(0) + \alpha'(1) + \alpha'(\infty) < 1.
$$
Now since $\alpha(1) + \alpha(\infty) < \alpha'(1) + \alpha'(\infty) \leq \sum_{p \in S} \alpha'(p) < 1$, we necessarily have that
$$
3 = \sum_{p \in S} \alpha(p) + \alpha'(p) < 2 + \alpha(0),
$$
which then implies that $\alpha(0) > 1$, which is impossible.

So we are finally left with the case of only one pole. In this case, the inequalities could be satisfied with the following system of weights.
\begin{center}
    \begin{tabular}{c c}
        $\alpha'(x)$ & $\alpha(x)$ \\
        \hline
        $1/9$ & $8/9$ \\
        $1/9$ & $6/9$ \\
        $6/9$ & $5/9$
    \end{tabular}
\end{center}

Next, we deal with the case $\sum_{p \in S} \alpha(p) + \alpha'(p) = 2N$. So here the generic $\cal E = \calO(-N) \oplus \calO(-N)$, and 
$$
\theta : \calO(-N) \to \calO(-N) \otimes \Omega^1 \paren*{\sum_{p \in S} \ell(p)},
$$
which gives us that
$$
-N \geq -N -2 + \#\text{poles}
$$
so we have that the number of poles is greater than or equal to 2. We can then further control what $N$'s are allowed by checking 
$$
-N + \sum_{p \in S} \alpha'(p) < -2N + \sum_{p \in S} \alpha(p) + \alpha'(p),
$$
and then rearranging to see that $N < \sum_{p \in S} \alpha(p) \leq 3$, which again implies that $N = 0, 1, 2$.

\begin{itemize}
    \item $N=0$. Again this gives us $\sum_{p \in S} \alpha'(p) < 0$, which is impossible.

    \item $N = 1$. In this case, we have that $\sum_{p \in S} \alpha(p) + \alpha'(p) = 2$ and stability means
    $$
    \sum \alpha'(p) < 1.
    $$
        
    When we have two poles, we can assume WLOG that $\theta$ is holomorphic at say $0$. This means that we need the weights to satisfy $\alpha'(0) \leq \alpha(0), \alpha'(1) > \alpha(1), \alpha'(\infty) > \alpha(\infty).$
    In this case, an example would be the following data.
    \begin{center}
        \begin{tabular}{c c}
            $\alpha'(p)$ & $\alpha(p)$ \\
            \hline
            $1/6$ & $5/6$ \\
            $2/6$ & $1/6$ \\
            $2/6$ & $1/6$
        \end{tabular}
    \end{center}

    When we have three poles, then this means that $\alpha'(0) > \alpha(0), \alpha'(1) > \alpha(1), \alpha'(\infty) > \alpha(\infty)$. From this, we get that
    $$
    2 = \sum (\alpha' + \alpha) < 2 \sum \alpha'.
    $$
    From the stability condition that $\sum \alpha' < 1$, we get then that 
    $$
    2 < 2 \sum \alpha' < 2,
    $$
    which is impossible.

    \item $N=2$. Here we have $\sum \alpha' + \alpha = 4$ and the stability condition is $\sum \alpha' < 2$.

    Again, the two pole situation is when $\alpha'(0) \leq \alpha(0), \alpha'(1) > \alpha(1), \alpha'(\infty) > \alpha(\infty)$. An example of admissible weight data is the following.
    \begin{center}
        \begin{tabular}{c c}
            $\alpha'(p)$ & $\alpha(p)$ \\
            \hline
            $1/6$ & $5/6$ \\
            $5/6$ & $4/6$ \\
            $5/6$ & $4/6$
        \end{tabular}
    \end{center}

    Finally when we have 3 poles, then we need $\alpha'(p) > \alpha(p)$ for every $p \in S$. Again we can check that we cannot get a stable bundle because 
    $$
    4 = \sum \alpha' + \alpha < 2 \sum \alpha',
    $$
    and $\sum \alpha' < 2$ from the stability condition, so we get $4 < 4$, impossible. 
\end{itemize}

\subsection{General condition for existence}
The above analysis of the generic bundle case and three points give good indication for what happens in the general case. Again let $\cal E = \cal L \oplus \cal L'$ be our bundle. Suppose the weights of $\cal L$ are $\alpha(p)$ and the weights of $\cal L'$ are $\alpha'(p)$ for $p \in S$, for $S$ of any size. Assume also that the weights are normalized so that they all fall between 0 and 1.

\begin{prop}
\label{parstab cond}
    A stable parabolic system of Hodge bundles over $\PP^1$ with parabolic structure over $S = \set{p_1, \ldots, p_n}$ of type $(1,1)$ with semisimple local monodromies exists if and only if there is a system of weights $(\alpha(p), \alpha'(p))_{p \in S}$ such that
    \begin{itemize}
        \item If $\sum_{p \in S} (\alpha (p) + \alpha'(p)) = 2N + 1$, then 
        $$
        \frac{3}{2} - \frac{1}{2} \sum_{p \in S} \ell(p) < N + 1 - \sum_{p \in S} \alpha'(p);
        $$
        \item If $\sum_{p \in S} (\alpha(p) + \alpha'(p)) = 2N$, then
        $$
        1 - \frac{1}{2} \sum_{p \in S} \ell(p) < N - \sum_{p \in S} \alpha'(p).
        $$
    \end{itemize}
\end{prop}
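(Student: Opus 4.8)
The plan is to reduce the existence question, for a fixed admissible system of weights, to whether a certain half-open interval contains an integer, and then to recognise the two displayed inequalities as exactly the condition for such a weight system to be available. Throughout write $T=\sum_{p\in S}(\alpha(p)+\alpha'(p))$, $L=\sum_{p\in S}\ell(p)$ and $A'=\sum_{p\in S}\alpha'(p)$. Since $\pardeg\cal E=0$ and $\deg\cal E\in\Z$, admissibility forces $T\in\Z$ and $\deg\cal L+\deg\cal L'=-T$. As already noted in this section, stability of $(\cal E,\theta)$ is equivalent to $\deg\cal L'+A'<0$ (the only subsystem to test being $\cal L'$), and — with $\theta$ having poles exactly at the points where $\ell(p)=1$, as forced by the weights — $\theta$ is a section of $\cal L^\vee\otimes\cal L'\otimes\Omega^1_{\PP^1}\bigl(\sum_{\ell(p)=1}p\bigr)$, so it exists iff $\deg\cal L\le\deg\cal L'+L-2$; conversely a generic such section has its zeros off $S$, and the residue conditions are automatic in this rank-one setting, so it defines a valid strongly parabolic system of Hodge bundles. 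Eliminating $\deg\cal L=-T-\deg\cal L'$, a stable $(1,1)$ system with the prescribed weights exists if and only if there is an integer $k=\deg\cal L'$ with
\[
\frac{2-T-L}{2}\le k<-A',
\]
i.e. if and only if $A'<\bigl\lfloor\tfrac{T+L}{2}\bigr\rfloor-1$.

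Next I would unwind the two inequalities in the statement. When $T=2N+1$ the inequality is equivalent to $A'<N-\tfrac12+\tfrac{L}{2}=\tfrac{T+L}{2}-1$, and when $T=2N$ it is equivalent to $A'<N-1+\tfrac{L}{2}=\tfrac{T+L}{2}-1$; so in both parities the condition on the weights is simply $A'<\tfrac{T+L}{2}-1$. The forward implication is then immediate: if a stable system exists with weights $w$, an integer $k$ as above exists, hence $A'<\lfloor\tfrac{T+L}{2}\rfloor-1\le\tfrac{T+L}{2}-1$, which is the displayed inequality.

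For the converse I would start from any admissible $w$ with $A'<\tfrac{T+L}{2}-1$ and manufacture (possibly different) admissible weights whose interval contains an integer; since this holds automatically once $T+L$ is even, it suffices to reach such a configuration while keeping the inequality. If $T+L$ is already even there is nothing to do. If $T+L$ is odd I would apply the \emph{swap} move: at a single point $p_0$, exchange $\alpha(p_0)\leftrightarrow\alpha'(p_0)$. This preserves $\alpha(p_0)+\alpha'(p_0)$, hence $T$; it flips $\ell(p_0)$, so $L$ changes by $\pm1$ and $T+L$ becomes even; and it changes $A'$ by $\alpha(p_0)-\alpha'(p_0)$. A one-line estimate from $\tfrac{2-T-L}{2}<-A'$ shows the new weights still satisfy $\tfrac{2-T'-L'}{2}<-A''$ provided $p_0$ is a pole with $\alpha'(p_0)-\alpha(p_0)\ge\tfrac12$, or a non-pole with $\alpha(p_0)-\alpha'(p_0)\le\tfrac12$. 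The remaining situation — all poles have gap $<\tfrac12$ and all non-poles have gap $>\tfrac12$ — is handled by first redistributing parabolic mass among the points (preserving $\pardeg\cal E$, i.e. $T$, and the pole/non-pole partition) to enlarge the slack $-A'-\tfrac{2-T-L}{2}$ and to produce a point amenable to the swap, using that all weights lie in $[0,1)$ and that $T\ge1$ whenever $T+L$ is odd.

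The step I expect to be the main obstacle is exactly this last part of the converse: proving that in the odd case one can always reach, by $\pardeg$-preserving modifications of the weights, a configuration with $T+L$ even for which the inequality still holds. This is a finite but somewhat delicate case analysis on how the parabolic weights are distributed between pole and non-pole points — the genuinely tight cases arising only for small $|S|$ — which I would organise by first lowering $A'$ as far as the constraints permit and only then performing the swap.
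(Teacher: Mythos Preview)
Your derivation of the half-open interval for $k=\deg\cal L'$ is exactly what the paper does. The paper splits on the parity of $T=\sum(\alpha+\alpha')$, writes $\cal E\simeq\cal O(-N-k)\oplus\cal O(-N+k)$ (resp.\ $\cal O(-N-k)\oplus\cal O(-N-1+k)$), extracts $k\ge 1-\tfrac12\sum\ell(p)$ (resp.\ $k\ge\tfrac32-\tfrac12\sum\ell(p)$) from the nonvanishing of $\theta$, extracts $k<N-\sum\alpha'(p)$ (resp.\ $k<N+1-\sum\alpha'(p)$) from $\pardeg\cal L'<0$, and stops. The converse is one sentence: ``Of course, any data that satisfies the conditions specified can be a valid parabolic system of Hodge bundles.'' So through your first two paragraphs you are reproducing the paper's argument, merely packaged uniformly in $T,L,A'$ rather than case-by-case.

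Where you part ways is the swap/redistribution programme in your last two paragraphs, and this is where the gap lies. The paper does \emph{no} weight modification whatsoever; it simply records the interval and treats the displayed inequality as the existence condition. Your observation that ``interval nonempty'' and ``interval contains an integer'' differ when $T+L$ is odd is sharp, but your proposed remedy does not prove the proposition under its natural reading. The statement is about existence of a stable system \emph{with the given weights} $(\alpha(p),\alpha'(p))$; manufacturing a different weight system $w'$ (via swaps at a point, or by redistributing parabolic mass) and building a stable bundle for $w'$ says nothing about existence for $w$. Under the only reading that makes your approach relevant---``some stable $(1,1)$ system exists iff some weight system satisfies the inequality''---the proposition becomes nearly vacuous and the delicate case analysis you sketch is unnecessary. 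In short: keep your first two paragraphs (they \emph{are} the paper's proof), and drop the swap argument; if you want to flag the $T+L$ odd subtlety, note it as a point where the displayed inequality is a slight shorthand for ``the derived interval for $k$ contains an integer,'' which is how the paper itself uses the result downstream.
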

\begin{proof}
We tackle first the case $\sum_{p \in S} (\alpha(x) + \alpha'(x)) = 2N$. Then for some $k \in \Z$, our bundle must be $\calO(-N-k) \oplus \calO(-N + k),$ where now we can assume $\cal L \simeq \calO(-N -k)$ and $\cal L' \simeq \cal O(-N + k)$. Let 
$$
\ell(p) = \begin{cases}
    1 & \text{if $\alpha'(p) > \alpha(p)$,} \\
    0 & \text{else.}
\end{cases}
$$
We have then that $\theta : \cal L \to \cal L' \otimes \Omega^1 (\log S)$ becomes
$$
\theta : \calO(-N - k) \to \calO(-N + k) \otimes \calO \paren*{-2 + \sum_{p \in S} \ell(p)},
$$
which tells us that $-N - k \leq -N + k -2 + \sum_{p \in S} \ell(p)$, i.e. the 
$$
k \geq 1 - \frac{1}{2} \sum_{p \in S} \ell(p).
$$
And then from the stability condition we get that
$$
\deg \cal L' + \sum_{p \in S} \alpha' (p) < 0.
$$
Then since $\cal L' = \calO(-N + k)$, this condition is exactly
$$
-N + k  + \sum_{p \in S} \alpha'(p) < 0.
$$
So then we can bound $k$ to be 
$$
1 - \frac{1}{2} \sum_{p \in S} \ell(p) \leq k < N - \sum_{p \in S} \alpha'(p).
$$

In the case of $\sum \alpha + \alpha' = 2N + 1$, we have something similar:
$$
\cal E = \calO(-N - k) \oplus \calO(-N - 1 + k),
$$
where $\cal L \simeq \calO(-N-k)$ and $\cal L' \simeq \calO(-N-1+k)$. With $\ell(p)$ defined the same as before, 
$$
-N - k \leq -N - 1 + k - 2 + \sum_{p \in S} \ell(p),
$$
so we have
$$
k \geq \frac{3}{2} - \frac{1}{2} \sum_{p \in S} \ell(p).
$$
The stability condition finally becomes
$$
-N - 1 + k + \sum_{p \in S} \alpha'(p) < 0.
$$
So we get
$$
\frac{3}{2} - \frac{1}{2} \sum_{p \in S} \ell(p) \leq k < N + 1 - \sum_{p \in S} \alpha'(p).
$$

Of course, any data that satisfies the conditions specified can be a valid parabolic system of Hodge bundles. 
\end{proof}

\subsection{Comparison with regular parabolic stability}
Note that for regular parabolic stability, we need that every subbundle with the induced filtration must have parabolic degree less than zero. We will see that this is explicitly not the case in our Higgs bundle setting. 

\begin{prop}
    Stable parabolic system of Hodge bundles of type $(1,1)$ are not stable as regular parabolic bundles. 
\end{prop}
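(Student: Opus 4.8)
The plan is to show that the first summand $\cal L\subset\cal E=\cal L\oplus\cal L'$ by itself violates the ordinary parabolic slope inequality. The key observation is that $\cal L$ is a direct summand of $\cal E$, hence certainly a parabolic subbundle, but it is \emph{not} a $\theta$-stable subbundle (since $\theta\neq 0$ maps $\cal L$ into $\cal L'$, so $\theta(\cal L)\not\subset\cal L\otimes\Omega^1(\log S)$); thus $\cal L$ is never examined when one checks stability of $(\cal E,\theta)$ as a system of Hodge bundles, yet it \emph{must} be examined for ordinary parabolic stability.

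First I would pin down the induced parabolic structure on $\cal L$ and on $\cal L'$. At each $p\in S$ the flag of $\cal E_p=\cal L_p\oplus\cal L'_p$ is the length-two flag assembled from the two lines so that the smaller subspace carries the larger of the two weights $\alpha(p),\alpha'(p)$. Intersecting $\cal L_p$ with this flag produces only the subspaces $0$ and $\cal L_p$, and tracing through the definition of induced weights (Definition~\ref{def: parabolic bun}) one finds that $\cal L$ receives the weight $\alpha(p)$ at $p$ in either case: if $\alpha(p)<\alpha'(p)$ then $\cal L_p=\cal L_p\cap F_2(p)$ and $F_2(p)$ has weight $\alpha(p)$; if $\alpha(p)\ge\alpha'(p)$ then $\cal L_p=F_1(p)$, and the ``largest weight possible'' rule gives it $\max\{\alpha(p),\alpha'(p)\}=\alpha(p)$. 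Symmetrically $\cal L'$ gets weight $\alpha'(p)$ at each $p$. Hence
$$
\pardeg\cal L=\deg\cal L+\sum_{p\in S}\alpha(p),\qquad \pardeg\cal L'=\deg\cal L'+\sum_{p\in S}\alpha'(p),
$$
and adding these gives $\pardeg\cal L+\pardeg\cal L'=\pardeg\cal E=0$.

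Next I would invoke the Hodge-stability hypothesis: as recalled at the start of this section, the only $\theta$-stable subbundle compatible with the Hodge decomposition that must be tested is $\cal L'$, so stability of $(\cal E,\theta)$ forces $\pardeg\cal L'<0$. Combined with $\pardeg\cal L=-\pardeg\cal L'$ from the previous step, this gives $\pardeg\cal L>0=\pardeg\cal E$. Since $\rank\cal L=1=\frac{1}{2}\rank\cal E$, this reads $\parmu\cal L>\parmu\cal E$, so $\cal L$ violates even the \emph{semi}stability inequality $\parmu\cal L\le\parmu\cal E$ for ordinary parabolic bundles; in particular $\cal E$ is not stable as a regular parabolic bundle. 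There is no genuine obstacle here: the only care-point is the bookkeeping of induced parabolic weights --- in particular the convention that in a parabolic flag the smaller subspace carries the larger weight --- together with keeping straight that $\cal L$ is exactly the subbundle that ordinary parabolic stability sees but Hodge-bundle stability does not. (The same computation shows, with no change, that for arbitrary $\pardeg\cal E$ one gets $\pardeg\cal L>\frac{1}{2}\pardeg\cal E=\parmu\cal E$; we have merely normalized $\pardeg\cal E=0$.)
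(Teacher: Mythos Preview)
Your proof is correct and follows essentially the same approach as the paper: both argue that Hodge-bundle stability forces $\pardeg\cal L'<0$, whence $\pardeg\cal L=-\pardeg\cal L'>0$, so the subbundle $\cal L$ violates ordinary parabolic stability. The paper normalizes via the shift operation to $\sum_{p}\alpha(p)+\alpha'(p)=0$ and writes $\cal E=\cal O(-k)\oplus\cal O(k)$, but this is cosmetic; your more careful verification of the induced weight on $\cal L$ is a nice addition that the paper leaves implicit.
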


\begin{proof}
    Suppose after applying shift operations we have that $\sum_{p \in S} \alpha(p) + \alpha'(p) = 0$. This means that $N = 0$ in Proposition \ref{parstab cond}, so our bundle must look like
    $$
    \calO(-k) \oplus \calO(k),
    $$
    where $\theta : \calO(-k) \to \calO(k) \otimes \Omega^1 (\log S).$ The parabolic degree 0 condition becomes
    $$
    \paren*{-k + \sum_{p \in S} \alpha(p)} + \paren*{k + \sum_{p \in S} \alpha'(p)} = 0,
    $$
    and the parabolic stability condition tells us that 
    $$
    k + \sum_{p \in S} \alpha'(p) < 0.
    $$
    But then this implies that 
    $$
    - k + \sum_{p \in S} \alpha(p) > 0,
    $$
    which contradicts stability, since this is the parabolic degree of the $\calO(-k)$ subbundle.
\end{proof}

\subsection{Comparison with Biswas' criterion}
Biswas provided a criterion for the existence of rank 2 stable parabolic bundles of degree zero with prescribed weights. Suppose that after applying our shift operations on our bundles, we have that the parabolic weights sum to zero. Then in this setting, Biswas' criterion becomes the following theorem.
\begin{thm*}[\cite{Biswas1998ACF}]
\label{higgs unstable}
    There is a parabolic bundle of degree zero and with parabolic weights $\set{\alpha(p) \geq \beta(p)}$ at $p \in S$ if and only if for any subset $A \subset S$ of cardinality $2j + 1$, with $j$ a non-negative integer, the following inequality holds:
    $$
    -j + \sum_{p \in A} \alpha(p) + \sum_{p \in S \sm A} \beta(p) < 0.
    $$
\end{thm*}

Now in the case of $n = |S| = 3$, we have that our local system is necessarily rigid in the sense of \cite{katz1996rigid}. So if $\cal E = \cal L \oplus \cal L'$ is our parabolic Higgs bundle, then any other local system with local monodromies conjugate to the ones given by the Higgs bundle must give an isomorphic local system. But now, Biswas' criterion gives a necessary and sufficient condition for the existence of a $U(2)$ local system, whereas our conditions guarantee the existence of $U(1,1)$ local system. In particular, because we require our local systems to be irreducible, this necessarily means that we cannot satisfy both Biswas' criterion and the criterion of Proposition \ref{parstab cond}. We will show this directly:
\begin{prop}
    Suppose $|S| = 3$ and $\set{\alpha(p), \alpha'(p)}_{p \in S}$ is a system of weights that satisfy Biswas' criterion. Suppose further that $\alpha(p) + \alpha'(p) = 0$ for all $p \in S$, i.e. we have local monodromies of determinant one. Then this system of weights cannot satisfy the criterion given in Proposition \ref{parstab cond}.
\end{prop}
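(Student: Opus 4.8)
The plan is to argue by contradiction: I would assume a semistable system of Hodge bundles $(\cal E,\theta)$ of type $(1,1)$ with the given weight data exists, and combine the two inequalities such a bundle must satisfy (extracted from the proof of Proposition~\ref{parstab cond}) with two families of Biswas' inequalities --- the one for $A=S$ and the three for singletons $A=\set{p}$ --- to reach a contradiction. The point I would stress is that ``satisfying the criterion of Proposition~\ref{parstab cond}'' means more than the displayed real inequality: it requires an \emph{integer} $k$ realizing the splitting type $\cal E\simeq\cal O(-k)\oplus\cal O(k)$, and it is this integrality, together with Biswas, that forces the contradiction.

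For notation, since $\alpha(p)+\alpha'(p)=0$ at each $p$ I would write $c_p:=\abs{\alpha(p)}=\abs{\alpha'(p)}\ge 0$, so the two parabolic weights at $p$ are $c_p$ and $-c_p$ and, in Biswas' notation, $\gamma(p)=c_p$, $\delta(p)=-c_p$. Because $\sum_{p\in S}(\alpha(p)+\alpha'(p))=0$ we are in the even case $N=0$ of Proposition~\ref{parstab cond}, so $\cal E\simeq\cal O(-k)\oplus\cal O(k)$ with $\cal L\simeq\cal O(-k)$, $\cal L'\simeq\cal O(k)$ for some $k\in\Z$. Writing $t=\abs{\set{p\in S\mid\alpha'(p)>\alpha(p)}}=\abs{\set{p\in S\mid\alpha(p)<0}}$ for the number of poles of $\theta$, one has $\sum_{p\in S}\ell(p)=t$ and $-\sum_{p\in S}\alpha'(p)=\sum_{p:\alpha(p)\ge 0}c_p-\sum_{p:\alpha(p)<0}c_p$. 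The constraints I would read off from Proposition~\ref{parstab cond} are $k\ge 1-t/2$ --- because $\theta$ is a nonzero map $\cal L\to\cal L'\otimes\Omega^1$ with exactly $t$ poles, so $\deg\cal L\le\deg\cal L'-2+t$ --- together with $k<-\sum_{p\in S}\alpha'(p)$, from semistability against the $\theta$-invariant subbundle $\cal L'$ (using $\pardeg\cal E=0$).

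The main step would be to feed in Biswas. The $A=S$ inequality (cardinality $3=2\cdot 1+1$, so $j=1$) reads $-1+\sum_{p\in S}\gamma(p)<0$, i.e.\ $\sum_{p\in S}c_p<1$; hence $-\sum_{p\in S}\alpha'(p)\le\sum_{p\in S}c_p<1$, so the integer $k$ satisfies $k\le 0$, and with $k\ge 1-t/2$ this forces $t\ge 2$. If moreover $-\sum_{p\in S}\alpha'(p)\le 0$ then $k\le -1$, giving $1-t/2\le -1$ and $t\ge 4>\abs{S}$, which is impossible; so $-\sum_{p\in S}\alpha'(p)>0$, and since $k$ is a non-positive integer strictly below this positive number, $k=0$, which again forces $t\ge 2$. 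We are left with $t\in\set{2,3}$ and $-\sum_{p\in S}\alpha'(p)>0$. If $t=3$ then $\alpha(p)<0$ for all $p$, so $-\sum_{p\in S}\alpha'(p)=-\sum_{p\in S}c_p<0$ --- not all $c_p$ are zero, since otherwise Biswas' singleton inequality would read $0<0$ --- contradicting $-\sum_{p\in S}\alpha'(p)>0$. If $t=2$, let $\set{r}=S\sm\set{p\mid\alpha(p)<0}$; then $-\sum_{p\in S}\alpha'(p)=c_r-\sum_{q\ne r}c_q$, while Biswas for $A=\set{r}$ (cardinality $1$, $j=0$) gives $\gamma(r)+\sum_{q\ne r}\delta(q)<0$, i.e.\ $c_r-\sum_{q\ne r}c_q<0$, again contradicting $-\sum_{p\in S}\alpha'(p)>0$. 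Each case being impossible, no such $(\cal E,\theta)$ can exist, so the weights fail the criterion of Proposition~\ref{parstab cond}.

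I expect the main obstacle to be exactly the integrality bookkeeping in the main step: the bare real inequality ``$1-t/2<-\sum_{p\in S}\alpha'(p)$'' of Proposition~\ref{parstab cond} is \emph{not} by itself incompatible with Biswas --- there are Biswas-admissible weights for which it holds --- so the argument genuinely must use that the splitting type $\cal O(-k)\oplus\cal O(k)$ pins $k$ to an integer, squeeze $k$ into the interval $[1-t/2,1)$, and then dispose of the residual cases $t=2$ and $t=3$ using the triangle-type Biswas inequalities. A secondary thing to verify is that nothing above depends on which of the two opposite weights at a point is labelled $\alpha$ versus $\alpha'$; as organized it does not, which is the arithmetic shadow of the conceptual reason that the rigid irreducible local system with these conjugacy classes cannot be at once unitarizable and genuinely $U(1,1)$.
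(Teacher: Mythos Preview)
Your argument is correct and follows essentially the paper's route: case on the number of poles $t$ (the paper's $\ell$), and use Biswas' inequalities for $A=S$ and for singletons to contradict the stability inequality $k<-\sum_p\alpha'(p)$. Your organization is slightly more streamlined than the paper's --- you invoke Biswas for $A=S$ once at the outset to force $k\le 0$ and hence $t\ge 2$, whereas the paper runs through all four values $\ell=0,1,2,3$ separately, using the normalization bound $|\alpha(p)|\le 1/2$ to dispose of $\ell=0,1$.

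One small slip to fix: the clause ``since $k$ is a non-positive integer strictly below this positive number, $k=0$'' does not follow as written --- a non-positive integer below a positive real need not be $0$. What actually pins down $k=0$ is the bound you already have from the other side: since $t\le|S|=3$, the pole inequality gives $k\ge 1-t/2\ge -1/2$, so the integer $k$ satisfies $k\ge 0$; combined with $k\le 0$ you get $k=0$. With that one line inserted, the remaining cases $t=2,3$ go through exactly as you wrote.
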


\begin{proof}
    To fix notation, suppose $S = \set{0, 1, \infty}$ and set $\ell = \sum_{p \in S} \ell(p)$ for ease of notation. $\ell$ could be $0, 1, 2,$ or $3$. We will deal with ease of these cases separately. Now from Proposition \ref{parstab cond}, we see that 
    $$
    1 - \frac{1}{2} \ell \leq k < - \sum_p \alpha'(p) = \sum_p \alpha(p),
    $$
    and recall that we are considering the stability of the bundle $\cal L \oplus \cal L'$ with Higgs field $\theta$ sending $\cal L$ to $\cal L'$. We have in this case that $\cal L \simeq \calO (-k)$ and $\cal L' \simeq \calO(k)$. We further have that $\max (\alpha(p), \alpha'(p)) \leq 1/2$ because $\alpha(p) + \alpha'(p) = 0$, and $|\alpha(p) - \alpha'(p)| \leq 1$ because the shift operation preserves their distances.

    Now we consider each case of $\ell$.
    \begin{enumerate}[label=]
        \item $\ell = 0$: In this case we have that 
        $$
        1 \leq k < \sum_p \alpha(p) \leq 3/2,
        $$
        so the only possible $k$ is $1.$ In this case, set $j = 1$, so then $A \subset S$ has cardinality $2 + 1 =3$, so $A = S$. Now since $\ell = 0$, we have that each $\alpha'(p) \leq \alpha(p)$, so then Biswas' inequality becomes
        $$
        -1 + \alpha(0) + \alpha(1) + \alpha(\infty) < 0.
        $$
        But, this implies that $\cal L$ satisfies the stability condition, i.e. we have that $\pardeg \cal L < 0 = \pardeg \cal L \oplus \cal L'$, but this necessarily implies that $\pardeg \cal L' > 0$ as $\pardeg \cal L + \pardeg \cal L' = 0$, which contradicts the stability condition for $\cal L \oplus \cal L'$ to be a stable parabolic system of Hodge bundles.

        \item $\ell = 1$: In this case, suppose WLOG that $\alpha'(0) > \alpha(0)$. This then implies that $\alpha(0) < 0$, which then implies that
        $$
        \frac{1}{2} \leq k < \sum \alpha(p) \leq 1.
        $$
        So in this case, no such $k$ is allowed.

        \item $\ell = 2$: Suppose WLOG that $\alpha'(0) > \alpha(0)$ and $\alpha'(1) > \alpha(1)$, so we have that $\alpha(0), \alpha(1) < 0$. So then we have that
        $$
        0 \leq k < 1/2.
        $$
        So the only possible $k$ is 0. Now use the Biswas criterion for $j = 0$. So in this case $|A| = 1$. In this case, let $A = \set{\infty}$, so then Biswas' inequality becomes
        $$
        \alpha(\infty) + \alpha(0) + \alpha(1) < 0.
        $$
        So again, this contradicts parabolic Higgs stability.

        \item $\ell = 3$: This is the case where each $\alpha(p) \leq \alpha'(p)$. But then this implies that
        $$
        - \frac{1}{2} \leq k < 0,
        $$
        and no such $k$ can satisfy this condition.
    \end{enumerate}
\end{proof}

\section{The \texorpdfstring{$(1,1,\ldots, 1)$}{(1,1,...,1)} case}
\label{section: (1,...,1)}
We can further examining the case of $(1,1,\ldots, 1)$ systems of Hodge bundles. In this situation, there are no particular subtleties involved with subbundles and flags, and so stability in this case is a purely numerical phenomenon.

In this setting, we suppose our bundle $\cal E = \cal E_1 \oplus \ldots \oplus \cal E_N$ with each $\cal E_i$ being a line bundle. Our Higgs field is $\theta : \cal E \to \cal E \otimes \Omega^1 (\log S)$ with $\theta_i := \theta \mid_{\cal E_i} : \cal E_i \to \cal E_{i+1} \otimes \Omega^1 (\log S)$. Denote the weights of $\cal E_i$ at $p \in S$ by $\alpha_i (p)$. Suppose again that we have semisimple monodromies on the level of local systems, then this implies that we have a strongly parabolic bundle and in particular we have that
$$
\Res_p \theta (\cal E_{i,\alpha}) \subset \cal E_{i - 1,\hat{\alpha}}, 
$$
where $\hat{\alpha}$ is the next highest weight after $\alpha$. 

If we have that $\alpha_i (p) \leq \alpha_{i-1} (p)$, then since 
$$
\Res_p \theta (\cal E_{i-1,\alpha_i (p)}) \subset \cal E_{i, \hat{\alpha_i (p)}} = 0,
$$
this means that $\theta_i : \cal E_i \to \cal E_{i+1} \otimes \Omega^1 (\log S)$, has no pole at $p$, and so we can regard this map actually as a morphism $\cal E_i \to \cal E_{i+1} \otimes \Omega( \log (S \sm \set{p}) )$. We can use this information to restrict the possible degrees of $\cal E_i$.

Similar to the $(1,1)$ case, define
$$
\ell_i (p) = \begin{cases}
    1 & \text{if $\alpha_i(p) > \alpha_{i-1} (p)$}, \\
    0 & \text{if $\alpha_i (p) \leq \alpha_{i-1}(p)$.}
\end{cases}
$$
So then from here, we see that $\ell_i := \sum_{p \in S} \ell_i (p)$ counts the number of poles of $\theta_i$. And since $\theta_i$ is a nonzero map
$$
\theta_i : \cal E_i \to \cal E_{i+1} \otimes \Omega^1 \paren*{\sum_{p \in S} \ell_i (p)},
$$
we must have then that $d_i := \deg \cal E_i \leq d_{i+1} + \paren*{-2 + \sum_{p \in S} \ell_i (p)}$. 

In the $(1,1,\ldots, 1)$ case, the only $\theta$-stable subbundles compatible with the Hodge decomposition is $\cal E_m \oplus \ldots \oplus \cal E_N$, so the semistability condition tells us that
$$
\sum_{i=m}^N d_i + \sum_{i=m}^N \sum_{p \in S} \alpha_i (p) \leq 0,
$$
with $d_i \leq d_{i+1} -2 + \sum_{p \in S} \ell_i (p)$.

In the case where our bundle is of type $(1,1,1)$, we can combine all the inequalities into one single long string of inequalities. In this case, we will denote our parabolic system of Hodge bundles as $\cal E \simeq \cal L_1 \oplus \cal L_2 \oplus \cal L_3$ with $\theta (\cal L_i) \subset \cal L_{i+1} \otimes \Omega^1 (\log S)$. Furthermore, we can also repeatedly apply the parabolic shifting operation described in Section \ref{section:shifting} so that the sum of weights is equal to 0, i.e. 
$$
\sum_{p \in S} \sum_{i=1}^3 \alpha_i (p) = 0.
$$

\begin{prop}
    Let $(\alpha_1(p), \alpha_2 (p), \alpha_3 (p))_{p \in S}$ be a system of numbers such that
    $$
    \sum_{i=1}^3 \sum_{p \in S} \alpha_i (p) = 0
    $$
    and let $(d_1, d_2, d_3) \in \Z^3$. 
    
    There exists a semistable type $(1,1,1)$ parabolic system of Hodge bundles $(\cal E, \theta)$ of parabolic degree 0 with $\cal L_i$ having weights $\alpha_i(p)$ and $\deg \cal L_i = d_i$ if and only if the following numerical conditions are satisfied:
    \begin{align*}
        \bullet \quad & \sum_{p \in S} \alpha_2 (p) + \alpha_3 (p) \leq d_1 \leq d_2 - 2 + \ell_1 \leq d_3 - 4 + \ell_1 + \ell_2 \leq -\sum_{p \in S} \alpha_3 (p) - 4 + \ell_1 + \ell_2, \\
        \bullet \quad & d_1 + d_2 + d_3 = 0.
    \end{align*}
\end{prop}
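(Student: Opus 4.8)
The strategy is to recognise the displayed chain of inequalities as a telescoped repackaging of just four inequalities: the two semistability inequalities coming from the only proper $\theta$-invariant subsystems of $\cal E$, and the two degree bounds forced on $\theta_1$ and $\theta_2$ by strong parabolicity; and then to give a direct construction for the converse. Throughout I assume, as is implicit in Section~\ref{section: (1,...,1)}, that $\theta_1=\theta|_{\cal L_1}$ and $\theta_2=\theta|_{\cal L_2}$ are both nonzero, which is precisely what makes $(\cal E,\theta)$ a genuine type $(1,1,1)$ system and not one that splits off a smaller Higgs summand.

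For necessity I would first unwind the subsystem structure: since each $\cal L_i$ has rank one and $\theta_1,\theta_2\neq 0$, the only proper nonzero $\theta$-invariant subbundles compatible with the Hodge decomposition are $\cal L_3$ and $\cal L_2\oplus\cal L_3$. A rank-one, resp.\ rank-two, subbundle of the Hodge-compatible flag simply inherits the weights $\alpha_i(p)$ of the summands it contains, so $\pardeg \cal L_3=d_3+\sum_{p\in S}\alpha_3(p)$ and $\pardeg(\cal L_2\oplus\cal L_3)=d_2+d_3+\sum_{p\in S}(\alpha_2(p)+\alpha_3(p))$; since $\parmu \cal E=0$, semistability is equivalent to both of these being $\le 0$. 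Using $d_1+d_2+d_3=0$ — which itself follows from $\pardeg \cal E=0$ and $\sum_{i,p}\alpha_i(p)=0$ — these two conditions become $\sum_{p\in S}(\alpha_2(p)+\alpha_3(p))\le d_1$ and $d_3\le-\sum_{p\in S}\alpha_3(p)$, the two extreme links of the chain. For the middle links I would use that strong parabolicity forces $\res_p\theta_i=0$ at every $p$ with $\alpha_{i+1}(p)\le\alpha_i(p)$, so $\theta_i$ has poles only at the $\ell_i$ points where $\alpha_{i+1}(p)>\alpha_i(p)$ and hence defines a nonzero morphism $\cal L_i\to\cal L_{i+1}\otimes\cal O_{\PP^1}(-2+\ell_i)$ of line bundles, forcing $d_i\le d_{i+1}-2+\ell_i$ for $i=1,2$; adding the running constants $0$, $-2+\ell_1$, $-4+\ell_1+\ell_2$ to the four inequalities $\sum_{p}(\alpha_2(p)+\alpha_3(p))\le d_1$, $d_1\le d_2-2+\ell_1$, $d_2\le d_3-2+\ell_2$, $d_3\le-\sum_{p}\alpha_3(p)$ turns them into the successive links of the single chain in the statement.

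For sufficiency, given weights with $\sum_{i,p}\alpha_i(p)=0$ and integers $d_1,d_2,d_3$ with $\sum d_i=0$ obeying the chain, I would take $\cal L_i=\cal O_{\PP^1}(d_i)$ with single weight $\alpha_i(p)$ at each $p$ and order the fibre flag of $\cal E_p$ by weight. The bound $d_i\le d_{i+1}-2+\ell_i$ is equivalent to $h^0\bigl(\PP^1,\cal L_i^\vee\otimes\cal L_{i+1}\otimes\cal O(-2+\ell_i)\bigr)\ge 1$, so one may choose a nonzero $\theta_i\colon\cal L_i\to\cal L_{i+1}\otimes\Omega^1(\log S)$ vanishing at every $p$ with $\alpha_{i+1}(p)\le\alpha_i(p)$; then $\theta=\theta_1+\theta_2$ is a strongly parabolic Higgs field, $(\cal E,\theta)$ is a type $(1,1,1)$ parabolic system of Hodge bundles with $\pardeg \cal E=\deg \cal E+0=0$, and its only proper $\theta$-invariant subsystems $\cal L_3$ and $\cal L_2\oplus\cal L_3$ have parabolic slopes $\le 0$ by the two extreme links of the chain, so it is semistable.

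The one place that needs genuine care — the main obstacle — is the pole bookkeeping underlying $d_i\le d_{i+1}-2+\ell_i$: one must verify that $\res_p\theta$ preserving the \emph{full} weight flag of $\cal E_p$, rather than only the Hodge-graded pieces, is equivalent to $\res_p\theta_i$ vanishing exactly off the $\ell_i$ points, so that this degree bound is both sharp (for necessity) and automatically realised by the construction (for sufficiency). For rank-one $\cal L_i$ this reduces to the observation that every flag subspace of $\cal E_p$ is a span of Hodge pieces of weight at least some threshold, and $\res_p\theta_i(\cal L_{i,p})\subset\cal L_{i+1,p}$ lands in such a span lying strictly above $\alpha_i(p)$ precisely when $\alpha_{i+1}(p)>\alpha_i(p)$; everything else is degree arithmetic on $\PP^1$.
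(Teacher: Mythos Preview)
Your proof is correct and follows essentially the same approach as the paper: identify $\cal L_3$ and $\cal L_2\oplus\cal L_3$ as the only proper $\theta$-invariant subsystems, extract the two semistability inequalities and the two degree bounds $d_i\le d_{i+1}-2+\ell_i$, and telescope them into the displayed chain. Your treatment of the converse is in fact more explicit than the paper's, which simply asserts that the construction is easy.
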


\begin{proof}
We assumed that $\pardeg \cal E = 0$, so we have that
$$
\pardeg \cal E = d_1 + d_2 + d_3 + \sum_{i=1}^3 \sum_{p \in S} \alpha_i (p) = 0.
$$
But we have that $\sum_{i=1}^3 \sum_{p \in S} \alpha_i (p) = 0$, which implies that $d_1 + d_2 + d_3 = 0$.

The only $\theta$-stable subsheaves to check in this situation are $\cal L_3$ and $\cal L_2 \oplus \cal L_3$. The semistability inequality on $\cal L_3$ gives us
$$
d_3 + \sum_{p \in S} \alpha_3 (p) \leq 0,
$$
which gives us that 
$$
d_3 \leq - \sum_{p \in S} \alpha_3 (p).
$$
The semistability condition on $\cal L_2 \oplus \cal L_3$ gives
$$
d_2 + d_3 + \sum_{p \in S} \alpha_2 (p) + \alpha_3 (p) \leq 0.
$$
Since $d_1 + d_2 + d_3 = 0$, we have that $d_1 = -d_2 - d_3$, and hence we get
$$
\sum_{p \in S} \alpha_2 (p) + \alpha_3 (p) \leq d_1,
$$
which is the first part of our inequalities. Now since $\cal L_1 \to \cal L_2 \otimes \Omega^1 (\log S)$ is a nonzero map, we have that
$$
d_1 \leq d_2 - 2 + \sum_{p \in S} \ell_1 (p),
$$
and similarly since $\cal L_2 \to \cal L_3 \otimes \Omega (\log S)$, we get that $d_2 \leq d_3 - 2 + \sum_p \ell_2 (p)$. Putting everything together, we get that
\begin{align*}
\sum_{p \in S} \alpha_2 (p)  + \alpha_3 (p) &\leq d_1 \leq d_2 - 2 + \sum_{p \in S} \ell_1 (p) \leq d_3 - 4 + \sum_{p \in S} \ell_1 (p) + \ell_2 (p) \leq \\
&- \sum_{p \in S} \alpha_3 (p) -4 + \sum_{p \in S} \ell_1 (p) + \ell_2 (p).
\end{align*}
Replacing $\sum_{p \in S} \ell_i(p)$ by $\ell_i$ gives us the system of inequalities that we desire.

Conversely given numerical data that satisfies inequalities given, it is easy to construct a semistable parabolic system of Hodge bundles of type $(1,1,1)$ with these numerical conditions. 
\end{proof}

\begin{rmk}
    Since the conditions for the $(1,1,\ldots, 1)$ case are purely numerical and require no subbundle enumeration, we can easily change the inequalities above to strict inequalities and get necessary stability inequalities.
\end{rmk}

\bibliography{ref}
\bibliographystyle{alpha}

\end{document}